\documentclass[11pt]{article}
\usepackage{xcolor}
\usepackage{graphicx}
\usepackage{amsmath,amsfonts,amssymb,graphics,amsthm}
\usepackage{comment}
\usepackage{tabularx}
\usepackage[protrusion=true,expansion=true]{microtype}
\usepackage{enumerate}
\usepackage{bbm}
\usepackage{mathrsfs}
\usepackage[margin=1in]{geometry}
\usepackage[shortlabels]{enumitem}

\allowdisplaybreaks

\setcounter{tocdepth}{2}

\usepackage[colorlinks,
            linkcolor=blue,
            anchorcolor=green,
            citecolor=blue
            ]{hyperref}

\numberwithin{equation}{section}

\newtheorem{theorem}{Theorem}[section]

\newtheorem{lemma}[theorem]{Lemma}
\newtheorem{proposition}[theorem]{Proposition}
 
\newtheorem{conjecture}[theorem]{Conjecture}

\newtheorem{remark}[theorem]{Remark}
\newtheorem{definition}[theorem]{Definition}

\newcommand{\N}{\mathbb{N}}

\newcommand{\1}{\mathbf{1}}

\newcommand{\sep}{\mathrm{sep}}

\newcommand{\QD}{\mathrm{QD}}
\newcommand{\QS}{\mathrm{QS}}

\newcommand{\CR}{\mathrm{CR}}

\DeclareMathOperator{\SLE}{SLE}
\DeclareMathOperator{\CLE}{CLE}

\DeclareMathOperator{\BCLE}{BCLE}

\def\cM{\mathcal{M}}

\def\cC{\mathcal{C}}

\newcommand{\wt}{\widetilde}

\newcommand{\lp}{\mathrm{loop}}

\newcommand{\potts}{\mathrm{Potts}}

\newcommand{\Loop}{\mathrm{Loop}}

\newcommand{\clockwise}{\circlearrowright} 
\newcommand{\counterclockwise}{\circlearrowleft} 

\def\cM{\mathcal{M}}

\def\cC{\mathcal{C}}

\newcommand\dd{\mathop{}\!\mathrm{d}} 
\let\originalleft\left
\let\originalright\right
\renewcommand{\left}{\mathopen{}\mathclose\bgroup\originalleft}
\renewcommand{\right}{\aftergroup\egroup\originalright}

\begin{document}

\title{Three-point connectivity constant for $q$-state Potts spin clusters}
\author{Gefei Cai\thanks{Peking University} \qquad Haoyu Liu$^*$ \qquad Baojun Wu$^*$ \qquad Zijie Zhuang\thanks{University of Pennsylvania}}

\maketitle
\begin{abstract}

Recently, Ang--Cai--Sun--Wu (2024) determined the three-point connectivity constant for two-dimensional critical percolation, confirming a prediction of Delfino and Viti (2010). In this paper, we address the analogous problem for planar critical $q$-state Potts spin clusters. We introduce a continuum three-point connectivity constant and compute it explicitly. Under the scaling-limit conjecture for Potts spin clusters, this quantity coincides with the scaling limit of the properly normalized probability that three points lie in the same spin cluster. The resulting formula agrees with the imaginary DOZZ formula up to an explicit $q$-dependent constant with a geometric interpretation. This answers a question from Delfino--Picco--Santachiara--Viti (2013). The proof exploits the coupling between CLE and LQG, together with the BCLE descriptions of $q$-state Potts scaling limits due to Miller--Sheffield--Werner (2017) and K\"ohler-Schindler and Lehmk\"uhler (2025).

\end{abstract}

\section{Introduction}\label{sec:intro}

The $q$-state Potts model is a fundamental lattice model in statistical mechanics, where each spin takes one of $q$ possible values. It was introduced in physics as a natural generalization of the Ising model to describe magnetism, with the case $q=2$ corresponding to the Ising model. Another important case is $q=3$, known as the three-state Potts model. Through its connection with the random-cluster model~\cite{Edwards-Sokal}, this formulation extends to all real $q>0$. In two dimensions, the Potts model exhibits rich behavior: it is widely believed that on the integer lattice, for $0<q\leq 4$, the model undergoes a continuous phase transition, and that at criticality it has a conformally invariant scaling limit. We refer to~\cite{DC-notes, Manolescu-review} for reviews of recent progress on this model.

Initiated by~\cite{Delfino_2013}, the probabilities that multiple points belong to the same spin cluster in the $q$-state Potts model have been studied and conjectured to be solvable within the framework of conformal field theory (CFT). In the CFT approach, multipoint correlation functions can be expressed using the conformal bootstrap formalism~\cite{bpz-conformal-symmetry} in terms of three-point correlation functions, encoded by structure constants, and other data determined by conformal symmetry. In previous work by the first and third authors, together with Ang and Sun~\cite{ACSW24}, the three-point connectivity constant was determined for percolation clusters and Ising spin clusters.\footnote{The result of~\cite{ACSW24} is based on CLE and does not directly apply to the $q$-state Potts model, since the scaling limit of the $q$-state Potts model, except for $q=2$, cannot be described by CLE but rather by a variant thereof.} The explicit formula for the 3-state Potts model is still missing in physics literature. The main focus of this paper is to define the three-point connectivity constant for $q$-state Potts spin clusters in the continuum and to compute it explicitly (Theorem~\ref{thm: main}). Under the scaling-limit conjecture for Potts spin clusters, this quantity coincides with the scaling limit of a properly normalized discrete probability that three points lie in the same spin cluster (Theorem~\ref{thm:connectivity}). The resulting formula agrees with the imaginary DOZZ formula up to an explicit $q$-dependent normalization constant, and is consistent with the numerical simulations in~\cite{Delfino_2013}.

The rest of Section~\ref{sec:intro} is organized as follows. In Section~\ref{sec:main-result}, we define the multipoint connectivity function of the $q$-state Potts model directly in the continuum setting. Assuming convergence of the $q$-state Potts model, this definition coincides with the scaling limit of its discrete counterpart. We then define the three-point connectivity constant and compute its value. Sections~\ref{sec:proof} and~\ref{sec:outlook} are devoted to outlining the proof strategy and providing an outlook.

\subsection{Three-point connectivity constant of the $q$-state Potts model}\label{sec:main-result}

To motivate our continuum definition, we first briefly recall the Edwards--Sokal coupling between the $q$-state Potts model and Fortuin--Kasteleyn (FK) percolation; see Section~\ref{subsec:edwards-sokal} for details. On the integer lattice, we first sample a critical FK percolation with cluster weight $q$. Then we assign an independently sampled spin to each cluster and aggregate clusters with the same spin, yielding the $q$-state Potts model. Equivalently, to obtain clusters with the same spin, we can color each FK cluster independently red with probability $1/q$ and blue otherwise. Then the red clusters correspond to the cluster of one distinguished spin, and the blue clusters correspond to the remaining spins.

Passing to the continuum, FK percolation is conjectured to converge to the conformal loop ensemble (CLE), a family of conformally invariant random collections of loops introduced by~\cite{shef-cle, shef-werner-cle}. Specifically, the cluster boundaries of FK percolation with cluster weight $q\in(1,4)$\footnote{We require $q>1$ since the Potts spin clusters only make sense when $q>1$. We also exclude the case $q=4$ since in this case the continuum analog of Edwards--Sokal coupling using CLE$_4$ does not give the 4-state Potts spin clusters as different CLE$_4$ loops do not touch each other.} are conjectured to converge to a non-simple $\CLE_{16/\kappa}$ with $\kappa = 4\arccos(-\sqrt{q}/2)/\pi \in( 8/3,4)$. Assuming this convergence, one can then perform the same independent coloring directly on the continuum clusters: color each $\CLE_{16/\kappa}$ cluster red with probability $1/q$ and blue otherwise, and take the closure of the union of red (resp.\ blue) clusters to obtain the red (resp.\ blue) spin clusters. The seminal work~\cite{MSW2017} gives a precise description of the interfaces between the red and blue spin clusters, which we review in Sections~\ref{subsec:pre-bcle} and~\ref{subsec:construct-interface}. In particular, the interfaces are ${\rm SLE}_\kappa$-type curves that are simple; moreover, conditioned on the boundary of a red cluster, the boundaries of the outermost blue clusters have the same law as a simple CLE$_\kappa$. Indeed, the description in~\cite{MSW2017} extends to any red probability $r \in (0,1)$ (for a single spin value $r = 1/q$), and the discrete counterpart is known as the fuzzy Potts model. From now on, we adopt the fuzzy Potts notation and use red to denote the distinguished spin value, and blue to denote the aggregate of the remaining spin values.

For a whole-plane continuum fuzzy Potts configuration $\omega$ with parameters $q\in(1,4)$ and $r=1/q$, let $\{\cC_i\}$ be the collection of all red clusters of $\omega$. For a red cluster $\mathcal{C}_i$, its conditional law given its outer boundary is that of a $\CLE_\kappa$ carpet, so one can define the Miller--Schoug measure~\cite{miller2024existence} $\mathcal{M}_i$ supported on $\cC_i$ (which is a conformally covariant natural measure on the $\CLE_\kappa$ cluster and is measurable with respect to $\mathcal{C}_i$). We define the $n$-point Green's function of red clusters as follows.

\begin{definition}\label{def:n-pt-gf}
    For any $n\in\N$, the $n$-point Green's function $G_n^\potts(z_1,\ldots,z_n)$ for the event that $z_1,\ldots,z_n$ are all in the same red cluster is given by
    \begin{equation}\label{eq:def-gf}
        G_n^\potts(z_1,\ldots,z_n)dz_1 \cdots dz_n=\mathbb{E}\left[\sum_{\cC_j}\prod_{i=1}^n \cM_j(dz_i)\right].
    \end{equation}
\end{definition}

By conformal covariance, we have $G_2^\potts(z_1,z_2)=C_2|z_1-z_2|^{-2(2-d)}$ and $G_3^\potts(z_1,z_2,z_3)=C_3|z_1-z_2|^{-(2-d)}|z_1-z_3|^{-(2-d)}|z_2-z_3|^{-(2-d)}$ for some constants $C_2$ and $C_3$, where $d=1+\frac{2}{\kappa}+\frac{3\kappa}{32}$ is the Hausdorff dimension of CLE$_\kappa$ carpet~\cite{ssw-radii,NacuWerner-carpet}. In particular, the three-point connectivity constant
\begin{equation*}
R(q):=\frac{G_3^{\rm Potts}(z_1,z_2,z_3)}{\sqrt{G_2^{\rm Potts}(z_1,z_2)G_2^{\rm Potts}(z_2,z_3)G_2^{\rm Potts}(z_1,z_3)}}=\frac{C_3}{C_2^{3/2}}
\end{equation*}
does not depend on the positions of $(z_i)_{1\le i\le3}$.

The main result of this paper is an explicit expression for $R(q)$ via the imaginary DOZZ formula. For $\beta > 0$, the imaginary DOZZ formula~\cite{zamolodchikov-gmm} is defined in terms of Zamolodchikov's Upsilon function $\Upsilon_{\beta}(z)$ by the following expression:
\begin{equation*}
    \ln \Upsilon_{\beta}(z)=\int_{0}^\infty \left( \Big (\frac{Q}{2}-z \Big )^2 e^{-t} - \frac{(\sinh((\frac{Q}{2}-z )\frac{t}{2}))^2}{\sinh (\frac{\beta t}{2}) \sinh(\frac{2t}{\beta} )} \right) \frac{dt}{t},\quad 0<\mathrm{Re}(z)< Q,\ \mbox{where } Q=\beta+ \frac1\beta
\end{equation*}
and then analytically continued to $\mathbb{C}$. Then the imaginary DOZZ formula is given by
\begin{align*}
C_{\beta}^{\rm ImDOZZ}(\alpha_1,\alpha_2,\alpha_3)= A\Upsilon_\beta\left(2 \beta-\beta^{-1}+\sum_{j=1}^3 \alpha_j\right)\cdot\prod_{i=1}^3\frac{ \Upsilon_\beta\left(\alpha_1+\alpha_2+\alpha_3-2\alpha_i+\beta\right) }{\left[\Upsilon_\beta\left( 2\alpha_i+\beta\right) \Upsilon_\beta\left( 2\alpha_i+2 \beta-\beta^{-1}\right)\right]^{1 / 2}}
\end{align*}
for real numbers $(\alpha_i)_{1\le i\le3}$, where the normalization factor $A$ is chosen so that $C_{\beta}^{\rm ImDOZZ}(\alpha,\alpha,0)=1$.

\begin{theorem}\label{thm: main}
For the continuum fuzzy Potts model with $q \in (1,4)$, $\kappa = 4\arccos(-\sqrt{q}/2)/\pi\in(\frac{8}{3},4)$, and parameter $r=1/q$, we have
\begin{align}\label{eq:fuzzy}
    R(q)=C(q) \cdot {C_{\beta=\frac{2}{\sqrt{\kappa}}}^{\rm ImDOZZ}\left(\frac{1}{4\beta} - \frac{\beta}{2},\frac{1}{4\beta} - \frac{\beta}{2},\frac{1}{4\beta} - \frac{\beta}{2}\right)},\quad \text{where }C(q)=\sqrt{\kappa/2} \cdot\frac{\sin(\kappa\pi/2)}{\sin(4\pi/\kappa)}.
\end{align}
\end{theorem}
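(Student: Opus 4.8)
The plan is to compute the two constants $C_2$ and $C_3$ by coupling the continuum fuzzy Potts model with Liouville quantum gravity (LQG) and using the known exact formulas for LQG correlation functions. The strategy follows the paradigm of \cite{ACSW24}, but the key new input is the \cite{MSW2017} description of the red/blue interfaces via $\mathrm{BCLE}$ and nested $\CLE_\kappa$, which lets us reduce the Potts spin cluster to a simple-$\CLE_\kappa$ carpet decorated by a further collection of loops. First I would set up the quantum version: weld a $\gamma$-LQG surface (with $\gamma = \sqrt{\kappa}$, the \emph{dual} parameter, since the relevant carpet is the simple-CLE$_\kappa$ carpet sitting inside the non-simple $\CLE_{16/\kappa}$ picture) decorated by the appropriate nested BCLE/CLE$_\kappa$ ensemble, and identify the Miller--Schoug measure $\cM_i$ on each red cluster with the $\gamma$-LQG measure restricted to that cluster. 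The conformal covariance exponent $2-d$ should match the LQG scaling dimension $\Delta = \tfrac{\gamma^2}{4}+1 - \tfrac{\gamma^2}{16}\cdot(\ldots)$ — more precisely one checks $d = 1 + \tfrac{2}{\kappa} + \tfrac{3\kappa}{32}$ is exactly the Hausdorff dimension that makes the quantum area on the carpet have the right dimension $2-d$ of quantum/Euclidean scaling, which is guaranteed by \cite{miller2024existence}.

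Next I would realize $G_2^\potts$ and $G_3^\potts$ as LQG structure constants. Concretely: one embeds the picture in a quantum sphere (for $G_3$) or a quantum surface with two marked points (for $G_2$), inserts $\gamma$-typical (or $\alpha$-insertion) points at $z_1,\dots,z_n$, and the correlation function $\E[\sum_j \prod_i \cM_j(dz_i)]$ becomes a Liouville CFT correlation function weighted by the law of the CLE/BCLE decoration restricted to the event that all insertion points fall in a common red cluster. The main tool is the conformal welding / mating-of-trees type identity that expresses this weighted LCFT correlation as a DOZZ-type integral. Because the relevant loop ensemble here is a \emph{simple} CLE$_\kappa$ carpet (after conditioning on a red cluster's outer boundary, per the \cite{MSW2017} description quoted in the excerpt), the three-point function of the carpet is governed by the same imaginary DOZZ formula $C^{\rm ImDOZZ}_{\beta}$ with $\beta = 2/\sqrt\kappa$ that appeared for CLE$_\kappa$ carpets in \cite{ACSW24}; the insertion weights $\alpha_i = \tfrac{1}{4\beta} - \tfrac{\beta}{2}$ are the LQG weights corresponding to a ``typical carpet point,'' i.e.\ the dimension $d$. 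This gives the $C^{\rm ImDOZZ}$ factor in \eqref{eq:fuzzy}; the ratio $C_3/C_2^{3/2}$ automatically cancels the overall normalization $A$ and the $\Upsilon$-factors depending only on a single $\alpha_i$, leaving precisely $C^{\rm ImDOZZ}_\beta(\alpha,\alpha,\alpha)$ up to the prefactor.

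The extra factor $C(q) = \sqrt{\kappa/2}\cdot \tfrac{\sin(\kappa\pi/2)}{\sin(4\pi/\kappa)}$ is the genuinely new, $q$-dependent geometric correction, and pinning it down is where I expect the real work to be. It arises from two sources that are absent in the pure-CLE$_\kappa$ computation of \cite{ACSW24}: (i) the red clusters of the fuzzy Potts model are not single CLE$_\kappa$ carpets but are built by gluing a CLE$_\kappa$-carpet to the \emph{non-simple} $\CLE_{16/\kappa}$ clusters across the BCLE structure, so the ``number of ways'' a point can be inside a red cluster and the normalization of $\cM_i$ relative to the LQG area carries a combinatorial/loop-intensity factor; and (ii) the passage between the $\kappa$ and $16/\kappa$ LQG parameters ($\gamma$ versus $4/\gamma$) in the welding introduces trigonometric factors of the form $\sin(\pi\gamma^2/4)$. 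I would compute $C(q)$ by carefully tracking the loop intensity measure (the ``$\nu$'' weight per loop) through the BCLE$(\kappa';\rho)$ / CLE$_\kappa$ decomposition of \cite{MSW2017} and \cite{KL2025}, using the quantum-gravity disintegration formulas (quantum disk / quantum triangle amplitudes, e.g. from the LQG literature the paper cites implicitly) to evaluate the relevant one-loop partition function. A useful consistency check will be the case $q = 2$ ($\kappa = 3$), where the fuzzy Potts model with $r = 1/2$ is the Ising model, the $\CLE_{16/\kappa} = \CLE_{16/3}$ picture degenerates appropriately, and $C(2)$ should reduce to the constant already implicit in the Ising computation of \cite{ACSW24}; one checks $C(2) = \sqrt{3/2}\cdot \tfrac{\sin(3\pi/2)}{\sin(4\pi/3)} = \sqrt{3/2}\cdot\tfrac{-1}{-\sqrt3/2} = \sqrt{3/2}\cdot\tfrac{2}{\sqrt3} = \sqrt{2}$, which should be matched against the known normalization there. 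The main obstacle, then, is not the DOZZ input — that is imported wholesale from the CLE$_\kappa$ theory — but correctly accounting for all the loop-intensity and $\gamma \leftrightarrow 4/\gamma$ normalization constants that distinguish a Potts spin cluster from a bare CLE$_\kappa$ carpet.
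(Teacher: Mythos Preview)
Your high-level picture is right in one respect: since the interior of a red cluster conditioned on its outer boundary is a $\CLE_\kappa$ carpet, the imaginary DOZZ factor is inherited wholesale from the $\CLE_\kappa$ computation of \cite{ACSW24}. But your plan for obtaining the prefactor $C(q)$ --- tracking ``loop intensity'' weights through the BCLE decomposition and $\gamma\leftrightarrow 4/\gamma$ welding normalizations, or directly realizing $G_2^\potts,G_3^\potts$ as Liouville correlation functions via quantum-disk/triangle amplitudes --- misses the paper's key reduction and, as written, does not give a concrete path to the formula.

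The paper's argument is sharper and more structural. Let $\widetilde\SLE_\kappa^\sep$ be the counting measure on blue--red interfaces (outer boundaries of red clusters) separating $0$ and $\infty$, and $\SLE_\kappa^\sep$ the analogous counting measure on whole-plane $\CLE_\kappa$ loops. The central step (Theorem~\ref{thm:ratio}) is the equality of measures $\widetilde\SLE_\kappa^\sep = \mathsf C(\kappa)^{-2}\,\SLE_\kappa^\sep$, proved by showing that both induce the \emph{same} stationary shape distribution for the Markov chain ``pass from one blue--red interface to the next one inward'' (Propositions~\ref{prop:markov} and~\ref{prop:stationary-bcle-simple}); LQG conformal welding enters only to verify stationarity of the $\SLE_\kappa$ shape, not to compute any correlation function. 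Because $G_n^\potts$ is the integral of the carpet $n$-point function against the outer-boundary law (equation~\eqref{eq:n-point-1}), this yields $G_n^\potts = \mathsf C(\kappa)^{-2}\, G_n^{\CLE}$ for \emph{every} $n$ simultaneously, and hence $R(q)=\mathsf C(\kappa)\cdot R^{\CLE}$ with no separate computation of $C_2$ or $C_3$. The constant $\mathsf C(\kappa)$ is the ratio of translation densities of the two counting measures on the cylinder, namely $\mathsf C(\kappa)^2 = (\E[\log \mathsf R_{R\to B}]+\E[\log \mathsf R_{B\to R}])/\E[\log \mathsf R_{R\to B}]$, and is evaluated (Theorem~\ref{thm:value}) by differentiating the conformal-radius moment formulas of Proposition~\ref{prop:cr-moment-intro}, themselves consequences of the BCLE conformal-radius results of \cite{LSYZ24}. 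Thus the source of $C(q)$ is the ratio of expected log-conformal-radii between successive fuzzy-Potts interfaces --- a purely $\SLE$/$\CLE$ quantity --- rather than a welding or duality normalization.
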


We can relate the $n$-point Green's function $G_n^\potts(z_1,\ldots,z_n)$ in~\eqref{eq:def-gf} to the connectivity probability of the discrete fuzzy Potts model through the following conjecture. For $z_1,\ldots,z_n \in \mathbb{C}$, let $(z_i^\delta)_{1\le i\le n}$ be their lattice approximations on $\delta \mathbb{Z}^2$.

\begin{conjecture}\label{conj}
    Let $\omega^\delta$ be the fuzzy Potts configuration on $\delta \mathbb{Z}^2$ with parameters $q \in (1,4)$ and $r=1/q$.
    Define $P_n^\delta(z_1,\ldots,z_n)$ to be the probability that the points $z_i^\delta$ are all in the same red cluster of $\omega^\delta$, and let $\pi_\delta$ be the probability that there is a red nearest-neighbor path connecting the origin and the unit circle. Then the following limit exists:
    \begin{equation}\label{eq:gf-0}
    P_n(z_1,\ldots,z_n) := \lim_{\delta \to 0} \pi_\delta^{-n} P_n^\delta(z_1,\ldots,z_n).
    \end{equation}
    
    Furthermore, let $\{\mathcal{C}_i^\delta\}$ be the collection of red clusters of $\omega^\delta$, and let $\mathcal{M}_i^\delta$ be the counting measure on vertices in $\mathcal{C}_i^\delta$, normalized by $\delta^2\pi_\delta^{-1}$. Then, under the weak Hausdorff topology,
    \[ (\omega^\delta, \{\mathcal{M}_i^\delta\}) \overset{d}{\rightarrow} (\omega, \{C \cdot \mathcal{M}_i\}), \quad \mbox{as } \delta\to 0. \]
    Here, $\omega$ is the continuum fuzzy Potts configuration, $\mathcal{M}_i$ is the Miller--Schoug measure on $\mathcal{C}_i$ (the scaling limit of $\mathcal{C}_i^\delta$), and $C>0$ is a constant. 
\end{conjecture}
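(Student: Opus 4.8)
The plan is to derive Conjecture~\ref{conj} from the (currently open) convergence of critical FK percolation with cluster weight $q$ to $\CLE_{16/\kappa}$, supplemented by three ingredients that should each be within reach of standard techniques: stability of the independent red/blue coloring under the scaling limit, convergence of the renormalized counting measure on a single cluster to the Miller--Schoug measure, and quasi-multiplicativity (RSW) for red connections. For the coloring step, assume that $\mathrm{FK}_\delta$ converges --- jointly with the i.i.d.\ red/blue marks on its clusters --- to $\CLE_{16/\kappa}$ decorated by i.i.d.\ marks, in the weak Hausdorff topology. A continuum red spin cluster is the closure of a union of infinitely many colored $\CLE_{16/\kappa}$ clusters glued along shared boundary points, so the coloring map is not literally continuous; the input needed is an RSW estimate showing that, with high probability, no macroscopic red crossing is carried entirely by microscopic FK clusters, which makes the coloring map continuous on a set of full probability in the limit. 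Granting this, $\omega^\delta\to\omega$, and simultaneously any discrete red cluster $\mathcal{C}_i^\delta$ of macroscopic diameter converges to the corresponding continuum red cluster $\mathcal{C}_i$, whose conditional law given its outer boundary is a $\CLE_\kappa$ carpet by~\cite{MSW2017}.

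For the measure, condition on a discrete approximation of the outer boundary of such a $\mathcal{C}_i$. The red one-arm probability satisfies $\pi_\delta\asymp\delta^{2-d}$ with $d=1+\tfrac{2}{\kappa}+\tfrac{3\kappa}{32}$ the dimension of the $\CLE_\kappa$ carpet, so the normalization $\delta^2\pi_\delta^{-1}\asymp\delta^{d}$ exactly compensates the $\asymp\delta^{-d}$ lattice vertices of the cluster in a unit region. I would prove tightness of the random measures $\mathcal{M}_i^\delta$ from a two-point (second-moment) estimate --- the probability that two points at distance $\eps$ both lie on the red cluster is $\asymp\eps^{-(2-d)}$, again by RSW and quasi-multiplicativity --- and then identify every subsequential limit with the Miller--Schoug measure $\mathcal{M}_i$ up to a universal constant $C>0$, using the characterization of $\mathcal{M}_i$ through its conformal covariance from~\cite{miller2024existence}. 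Summing over the red clusters of diameter $\ge\eps$ and letting $\eps\to0$ (a Borel--Cantelli / summability step) then yields $(\omega^\delta,\{\mathcal{M}_i^\delta\})\to(\omega,\{C\cdot\mathcal{M}_i\})$ in the weak Hausdorff topology.

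To obtain the limit $P_n$, decompose the event $\{z_1^\delta,\dots,z_n^\delta\text{ all lie in a common red cluster}\}$ into $n$ one-arm events near the $z_i^\delta$ (each of probability $\asymp\pi_\delta$) together with a macroscopic joining event; quasi-multiplicativity --- an RSW consequence, exactly as in the percolation analysis of~\cite{ACSW24} --- gives the two-sided bound $\pi_\delta^{-n}P_n^\delta\asymp1$, so $\pi_\delta^{-n}P_n^\delta(z_1,\dots,z_n)$ stays bounded between two positive constants and has subsequential limits. The same estimates give, uniformly in $\delta$ and for small $\rho>0$,
\[
\E\Big[\sum_j\prod_{i=1}^n\mathcal{M}_j^\delta(B_\rho(z_i))\Big]=(\pi\rho^2)^n\,\pi_\delta^{-n}P_n^\delta(z_1,\dots,z_n)\,\big(1+o_\rho(1)\big),
\]
while the measure convergence above forces the left-hand side to converge, as $\delta\to0$, to $C^n\int_{B_\rho(z_1)}\cdots\int_{B_\rho(z_n)}G_n^\potts(w_1,\dots,w_n)\,dw_1\cdots dw_n$ by Definition~\ref{def:n-pt-gf}. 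Dividing by $(\pi\rho^2)^n$, letting $\delta\to0$ and then $\rho\to0$ (using continuity of $G_n^\potts$ at distinct points and Lebesgue differentiation) shows that every subsequential limit of $\pi_\delta^{-n}P_n^\delta(z_1,\dots,z_n)$ equals $C^n\,G_n^\potts(z_1,\dots,z_n)$; hence the limit $P_n$ exists and equals $C^n G_n^\potts$. Specializing to $n=3$ and inserting the value of $R(q)$ from Theorem~\ref{thm: main} then yields the $\pi_\delta$-normalized scaling limit of the discrete three-point connectivity.

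\textbf{The main obstacle} is the very first assumption: convergence of critical FK percolation with $q\in(1,4)$ to $\CLE_{16/\kappa}$ is at present known only for $q=1$ (percolation) and $q=2$ (FK--Ising), which is precisely why the statement is phrased as a conjecture. For $q\in\{1,2\}$ the remaining ingredients are within reach of existing technology --- for $q=1$ much of the relevant analysis already appears in~\cite{ACSW24} --- so the conjecture essentially reduces to this convergence input. Even granting it, the natural-measure convergence in the second step is not automatic: it requires a priori Hölder regularity of the limiting measure and a multi-scale second-moment argument in the spirit of the construction in~\cite{miller2024existence}, and the commutation of the (infinitary) coloring operation with the scaling limit relies on the RSW control described above to verify continuity in the weak Hausdorff topology.
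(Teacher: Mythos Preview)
The statement you are addressing is a \emph{conjecture}, and the paper does not prove it. The paper's only discussion is Remark~\ref{rmk:measure-converge}, which says (i) the percolation analog is known from~\cite{CN06,GPS13,Camia24}, (ii) by~\cite[Theorem~4.2]{KSL22} the convergence of the fuzzy Potts model follows once FK$\to\CLE_{16/\kappa}$ is established, and (iii) given this, the connectivity limits and natural-measure convergence are ``expected'' via percolation arguments. That is all; there is no proof to compare against.

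Your proposal is a considerably more detailed elaboration of exactly this heuristic: you spell out the coloring-stability step (and correctly flag that it is not a continuous map), the second-moment tightness argument for the measures, the identification with the Miller--Schoug measure via conformal covariance, and the quasi-multiplicativity route to the existence of $P_n$. You also correctly identify the FK$\to\CLE$ convergence as the genuine obstruction, which is precisely why the paper states this as a conjecture rather than a theorem. In that sense your write-up is consistent with---and more explicit than---the paper's own remarks, but it is not a proof, and neither is anything in the paper.
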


\begin{remark}\label{rmk:measure-converge}
    The analog of Conjecture~\ref{conj} for critical Bernoulli percolation on the triangular lattice has been proved in~\cite{CN06, GPS13, Camia24} based on~\cite{smirnov-cardy}. As explained in \cite[Theorem 4.2]{KSL22}, as long as the convergence of FK percolation interfaces to non-simple CLE$_{16/\kappa}$ is known, the convergence of the fuzzy Potts model to its continuum counterpart can be deduced. Assuming this, we also expect the convergence of the connectivity probabilities in~\eqref{eq:gf-0}, and the natural measure can be extracted using percolation arguments.

\end{remark}

Note that~\eqref{eq:def-gf} is consistent with~\eqref{eq:gf-0} under Conjecture~\ref{conj}. Indeed, we may write
\begin{equation}\label{eq:gf-discrete}
\pi_\delta^{-n}P_n^\delta(z_1^\delta,\ldots,z_n^\delta)\prod_{k=1}^n \delta^2dz_k^\delta=\mathbb{E}\left[\sum_{\mathcal{C}_i^\delta}\prod_{k=1}^n\mathcal{M}_i^\delta(dz_k^\delta)\right].
\end{equation}
Then, under Conjecture~\ref{conj}, the left-hand side of~\eqref{eq:gf-discrete} converges to $P_n(z_1,\ldots,z_n)\prod_{k=1}^n dz_k$, whereas the right-hand side of~\eqref{eq:gf-discrete} converges to $C^n \mathbb{E}\left[\sum_{\cC_j}\prod_{k=1}^n \cM_j(dz_k)\right]$. Comparing with~\eqref{eq:def-gf}, the limit $P_n(z_1,\ldots,z_n)$ of $\pi_\delta^{-n}P_n^\delta(z_1^\delta,\ldots,z_n^\delta)$ equals $C^nG_n^\potts(z_1,\ldots,z_n)$. In particular, we have
\begin{equation}\label{eq:change}
R(q):=\frac{G_3^{\rm Potts}(z_1,z_2,z_3)}{\sqrt{G_2^{\rm Potts}(z_1,z_2)G_2^{\rm Potts}(z_2,z_3)G_2^{\rm Potts}(z_1,z_3)}}=\frac{P_3(z_1,z_2,z_3)}{\sqrt{P_2(z_1,z_2)P_2(z_2,z_3)P_2(z_1,z_3)}}.
\end{equation}

Hence, under Conjecture~\ref{conj}, we obtain the following corollary of Theorem~\ref{thm: main}.
\begin{theorem}\label{thm:connectivity}
For $q\in(1,4)$ and $r=1/q$, assume Conjecture~\ref{conj} holds, and $P_n(z_1,\ldots,z_n)$ is defined as in~\eqref{eq:gf-0}. Then 
\begin{equation}\label{eq:thm1.4}
        \frac{P_3(z_1,z_2,z_3)}{\sqrt{P_2(z_1,z_2)P_2(z_2,z_3)P_2(z_1,z_3)}}=C(q) \cdot C_{\beta=\frac{2}{\sqrt{\kappa}}}^{\rm ImDOZZ}\left(\frac{1}{4\beta} - \frac{\beta}{2},\frac{1}{4\beta} - \frac{\beta}{2},\frac{1}{4\beta} - \frac{\beta}{2}\right).
\end{equation}
\end{theorem}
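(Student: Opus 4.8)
The plan is to reduce Theorem~\ref{thm:connectivity} to Theorem~\ref{thm: main} using the compatibility relation~\eqref{eq:change}, which is itself a consequence of Conjecture~\ref{conj}. The key observation is that Theorem~\ref{thm: main} is a statement purely about the continuum fuzzy Potts model and makes no reference to any lattice approximation, whereas Theorem~\ref{thm:connectivity} concerns the limits $P_n$ of the rescaled discrete connectivity probabilities. The bridge between them is precisely the content of Conjecture~\ref{conj}, which asserts both the existence of the limit $P_n(z_1,\dots,z_n) = \lim_{\delta\to 0}\pi_\delta^{-n}P_n^\delta(z_1,\dots,z_n)$ and the joint convergence of $(\omega^\delta,\{\mathcal{M}_i^\delta\})$ to $(\omega,\{C\cdot\mathcal{M}_i\})$ in the weak Hausdorff topology.

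Concretely, I would proceed as follows. First, rewrite the rescaled discrete connectivity probability in the measure form~\eqref{eq:gf-discrete}: by definition of $\mathcal{M}_i^\delta$ as the vertex-counting measure normalized by $\delta^2\pi_\delta^{-1}$, the expected sum $\mathbb{E}[\sum_{\mathcal{C}_i^\delta}\prod_{k=1}^n\mathcal{M}_i^\delta(dz_k^\delta)]$ equals $\pi_\delta^{-n}P_n^\delta(z_1^\delta,\dots,z_n^\delta)\prod_{k=1}^n\delta^2\,dz_k^\delta$, since a term in the sum is nonzero exactly when all $n$ points lie in the common cluster $\mathcal{C}_i^\delta$. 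Second, invoke the joint convergence from Conjecture~\ref{conj}: passing to the limit $\delta\to 0$, the right-hand side of~\eqref{eq:gf-discrete} converges to $\mathbb{E}[\sum_{\mathcal{C}_j}\prod_{k=1}^n (C\cdot\mathcal{M}_j)(dz_k)] = C^n\,G_n^\potts(z_1,\dots,z_n)\prod_{k=1}^n dz_k$ by Definition~\ref{def:n-pt-gf}, while the left-hand side converges (by the existence part of Conjecture~\ref{conj}) to $P_n(z_1,\dots,z_n)\prod_{k=1}^n dz_k$. Matching the two limits yields $P_n(z_1,\dots,z_n) = C^n\,G_n^\potts(z_1,\dots,z_n)$ for every $n$. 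Third, form the ratio in~\eqref{eq:change}: the constant $C$ appears as $C^3$ in the numerator and as $(C^2\cdot C^2\cdot C^2)^{1/2} = C^3$ in the denominator, so it cancels identically, giving
\begin{equation*}
\frac{P_3(z_1,z_2,z_3)}{\sqrt{P_2(z_1,z_2)P_2(z_2,z_3)P_2(z_1,z_3)}} = \frac{G_3^\potts(z_1,z_2,z_3)}{\sqrt{G_2^\potts(z_1,z_2)G_2^\potts(z_2,z_3)G_2^\potts(z_1,z_3)}} = R(q).
\end{equation*}
Finally, substitute the explicit value of $R(q)$ from Theorem~\ref{thm: main} to obtain~\eqref{eq:thm1.4}.

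Since all the analytic content (the actual computation of $R(q)$ via the CLE/LQG coupling and the BCLE descriptions) is packaged into Theorem~\ref{thm: main}, and all the probabilistic input (existence of the scaling limit of the fuzzy Potts model together with its natural measure and connectivity probabilities) is packaged into Conjecture~\ref{conj}, the proof of Theorem~\ref{thm:connectivity} is essentially a bookkeeping argument: unwind the definitions, take limits, and observe the cancellation of the normalization constant. The only subtlety to check carefully is that the convergence in the weak Hausdorff topology of $(\omega^\delta,\{\mathcal{M}_i^\delta\})$ is strong enough to justify passing to the limit inside the expectation $\mathbb{E}[\sum_{\mathcal{C}_i^\delta}\prod_k \mathcal{M}_i^\delta(dz_k^\delta)]$ at the level of measures — i.e., testing against smooth compactly supported functions of $(z_1,\dots,z_n)$ and using that the expected total mass stays bounded on compact sets. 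This is the only place where one needs a uniform integrability or tightness input beyond the bare statement of Conjecture~\ref{conj}, but it is consistent with the heuristic in Remark~\ref{rmk:measure-converge} and with the percolation analog established in~\cite{CN06, GPS13, Camia24}; I would therefore present it as following from (a mild strengthening of) Conjecture~\ref{conj}, exactly as the excerpt does in the discussion surrounding~\eqref{eq:gf-discrete}.
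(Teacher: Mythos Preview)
Your proposal is correct and matches the paper's own treatment: the paper presents Theorem~\ref{thm:connectivity} as an immediate corollary of Theorem~\ref{thm: main} via the identity~\eqref{eq:change}, which is derived from Conjecture~\ref{conj} by exactly the measure-rewriting and limit-matching argument you outline around~\eqref{eq:gf-discrete}. Your added remark on the uniform-integrability subtlety is a fair caveat, but the paper simply absorbs it into the assumption of Conjecture~\ref{conj} without further comment.
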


For $q=3$, we have $C(q=3)=\sqrt{\frac{5+\sqrt{5}}{2}} \approx 1.902113$. Theorem~\ref{thm:connectivity} agrees with the numerical simulations in~\cite[Table 2]{Delfino_2013}\footnote{\cite{Delfino_2013} concerns the probability that three points are in the same spin cluster (not restricted to one distinguished spin), so the three-point connectivity constant simulated there corresponds to $\frac{1}{\sqrt{q}} R(q)$ in this paper.}; see Tables~\ref{tab:kappa-C-DOZZ} and~\ref{tab:Rs-vs-DOZZ}.

\begin{table}[h!]
  \centering
  \caption{Values of $\kappa = 4\arccos(-\sqrt{q}/2)/\pi$, $C(q)$, and the imaginary DOZZ constant in~\eqref{eq:thm1.4} for different values of $q$.}
  \label{tab:kappa-C-DOZZ}
  \begin{tabular}{|l||c|c|c|c|c|c|c|}
    \hline
    $q$ & 1.0 & 1.25 & 1.5 & 1.75 & 2.0 & 2.25 & 2.5 \\
    \hline
    $\kappa$ & 2.666667 & 2.755285 & 2.839139 & 2.920214 & 3.0 & 3.079786 & 3.160861 \\
    \hline
    $C(q)$ & 1.0 & 1.100695 & 1.202563 & 1.306731 & 1.414213 & 1.526056 & 1.643484 \\
    \hline 
    $\mathrm{ImDOZZ}$ & 1.0 & 0.997433 & 0.991314 & 0.983085 & 0.973497 & 0.962951 & 0.951647 \\
    \hline
    $q$ & & 2.75 & 3.0 & 3.25 & 3.5 & 3.75 & 4.0 \\
    \hline
    $\kappa$ &  & 3.244715 & 3.333333 & 3.429802 & 3.539893 & 3.678278 & 4.0 \\
    \hline
    $C(q)$ & & 1.768084 & 1.902113 & 2.049192 & 2.216090 & 2.419665 & $2\sqrt{2}$ \\
    \hline
    $\mathrm{ImDOZZ}$ &  & 0.939642 & 0.926870 & 0.913097 & 0.897767 & 0.879331 & 0.840896 \\
    \hline
  \end{tabular}
\end{table}

\begin{table}[h!]
  \centering
  \caption{Comparison of the three-point connectivity constant $\frac{1}{\sqrt{q}} R(q)$ with numerical estimates from~\cite[Table 2]{Delfino_2013}.}
  \label{tab:Rs-vs-DOZZ}
  \begin{tabular}{|l||c|c|c|c|c|c|c|}
    \hline
    $q$ & 1.0 & 1.25 & 1.5 & 1.75 & 2.0 & 2.25 & 2.5 \\
    \hline
    $R^{\rm num}_s$ & 1.0 & 0.9815(5) & 0.973(2) & 0.9720(5) & 0.9735(2) & 0.9800(3) & 0.9896(12) \\
    $\frac{C(q)}{\sqrt{q}}\cdot \mathrm{ImDOZZ}$ & 1.0 & 0.981964 & 0.973360 & 0.971087 & 0.973497 & 0.979678 & 0.989171 \\
    \hline
    $q$ &  & 2.75 & 3.0 & 3.25 & 3.5 & 3.75 & 4.0 \\
    \hline
    $R^{\rm num}_s$ &  & 1.002(2) & 1.0183(5) & 1.0376(20) & 1.061(3) & 1.093(3) & 1.18(1) \\
    $\frac{C(q)}{\sqrt{q}}\cdot \mathrm{ImDOZZ}$ &  & 1.00184 & 1.01788 & 1.03791 & 1.06345 & 1.09873 & 1.18921 \\
    \hline
  \end{tabular}
\end{table}

A key step in proving Theorem~\ref{thm: main} is the calculation of the conformal radii of interfaces in the fuzzy Potts model. We collect these results here, as they may be of independent interest. Let $\mathcal{L}_{R \to B}$ (resp.\ $\mathcal{L}_{B \to R}$) be the boundary of the outermost blue (resp.\ red) cluster surrounding the origin in a continuum fuzzy Potts model in the unit disk $\mathbb{D}$ with red (resp.\ blue) boundary conditions; see Section~\ref{subsec:construct-interface} for a precise definition.

For a simply connected domain $D \subset \mathbb{C}$ and a point $z \in D$, the conformal radius of $D$ viewed from $z$ is defined by $\CR(z,D):=|\psi'(0)|$, where $\psi$ is a conformal map from $\mathbb{D}$ to $D$ such that $\psi(0)=z$.
For a non-self-crossing loop $\eta$ on $\mathbb{C}$ surrounding the origin, let $D(\eta)$ be the connected component of $\mathbb{C} \setminus \eta$ containing the origin.  Write $\mathsf{R}_{R \to B}=\mathrm{CR}(0,D(\mathcal{L}_{R \to B}))$ and $\mathsf{R}_{B \to R}=\mathrm{CR}(0,D(\mathcal{L}_{B \to R}))$.

\begin{proposition}\label{prop:cr-moment-intro}
    Consider the continuum fuzzy Potts model with $q \in (1,4)$, $\kappa = 4\arccos(-\sqrt{q}/2)/\pi \in (8/3,4)$, and $r = 1/q$. 
    
    \begin{itemize}
        \item Let $\lambda>\frac{2}{\kappa}+\frac{3\kappa}{32}-1$ and $\theta=\frac{\pi}{\kappa} \sqrt{(4-\kappa)^2-8\kappa\lambda}$. Then
        \begin{equation}\label{eq:cr-1-23-intro}
            \mathbb{E}[(\mathsf{R}_{R \to B})^\lambda]=\frac{\cos(\frac{\pi(4-\kappa)}{\kappa})}{\cos(\theta)}.
        \end{equation}
        Moreover, if $\lambda \le \frac{2}{\kappa}+\frac{3\kappa}{32}-1$, the left-hand side of~\eqref{eq:cr-1-23-intro} is infinite.

        \item Let $-\lambda_0$ be the unique solution in $(0,1-\frac{\kappa}{8}-\frac{3}{2\kappa})$ to the equation
        \begin{equation*}
            \frac{\sin( \frac{\pi(\kappa-1)}{\kappa} \sqrt{(4-\kappa)^2+8\kappa x})}{\sin( \frac{\pi(2-\kappa)}{2\kappa}\sqrt{(4-\kappa)^2+8\kappa x})}=-2 \cos(\pi \tfrac{4-\kappa}{2}).
        \end{equation*}
        For $\lambda>\lambda_0$ and $\theta=\frac{\pi}{\kappa} \sqrt{(4-\kappa)^2-8\kappa\lambda}$,
        \begin{equation}\label{eq:cr-23-1-intro}
            \mathbb{E}[(\mathsf{R}_{B \to R})^\lambda] = \frac{1}{2 \cos(\frac{\pi(4-\kappa)}{\kappa})} \cdot \frac{\sin((\kappa-2) \theta) + 2 \cos(\pi \frac{4-\kappa}{2}) \sin((2-\frac{\kappa}{2}) \theta)}{\sin((\kappa-1) \theta) + 2 \cos(\pi \frac{4-\kappa}{2}) \sin((1-\frac{\kappa}{2}) \theta)}.
        \end{equation}
        Moreover, if $\lambda \le \lambda_0$, the left-hand side of~\eqref{eq:cr-23-1-intro} is infinite.
    \end{itemize}
    
\end{proposition}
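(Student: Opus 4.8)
The plan is to express each moment as a Laplace transform of a disconnection time and then evaluate it by solving a linear ODE. Since $\log\CR(0,\cdot)$ decreases at unit rate along a radial Loewner flow targeting the origin, if a radial exploration of the continuum fuzzy Potts configuration first surrounds the origin by the relevant loop at capacity time $\tau$, then that loop encloses a domain of conformal radius $e^{-\tau}$, so $\mathbb{E}[\mathsf{R}^{\lambda}]=\mathbb{E}[e^{-\lambda\tau}]$. Thus the first task is to read off, from the iterated-BCLE description of the fuzzy Potts interfaces recalled in Sections~\ref{subsec:pre-bcle}--\ref{subsec:construct-interface} (following~\cite{MSW2017,KSL22}), the radial exploration whose first completed loop around $0$ is $\mathcal{L}_{R\to B}$ (resp.\ $\mathcal{L}_{B\to R}$), together with the diffusion that drives it.

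For $\mathcal{L}_{R\to B}$ this is immediate: under red boundary conditions the outermost blue cluster boundaries form a simple $\CLE_\kappa$, so $\mathcal{L}_{R\to B}$ is the outermost $\CLE_\kappa$ loop around $0$ and $\mathsf{R}_{R\to B}$ has the Schramm--Sheffield--Wilson conformal-radius law~\cite{ssw-radii}. One recovers~\eqref{eq:cr-1-23-intro} by running the radial $\CLE_\kappa$ exploration towards $0$ (a radial $\SLE_\kappa(\kappa-6)$-type process): with $\Theta_t$ the angle between the tip and the force point and $\tau$ the time the force point is swallowed, $u(\theta):=\mathbb{E}_\theta[e^{-\lambda\tau}]$ satisfies a second-order hypergeometric-type ODE with $u=1$ at the endpoints where no loop around $0$ has yet formed, whose solution for these parameters is elementary trigonometric; evaluating it gives $\cos(\pi\tfrac{4-\kappa}{\kappa})/\cos\theta$, and the moment becomes infinite exactly when $\cos\theta$ first vanishes, i.e.\ at $\theta=\pi/2$, which is $\lambda=\tfrac{2}{\kappa}+\tfrac{3\kappa}{32}-1=d-2$ --- the integrability threshold dictated by the $\CLE_\kappa$-carpet dimension.

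The case $\mathcal{L}_{B\to R}$ is the substantive one, and it is here that $r=1/q$ genuinely enters. Under blue boundary conditions the boundary-anchored blue region is not a $\CLE_\kappa$ carpet, and the outermost red cluster around $0$ is typically separated from $\partial\D$ by a sequence of nested loops (bounding blue clusters) that the exploration must traverse first; the number of these is geometric with a $q$-dependent success probability coming from the relevant BCLE weight, and each ``step'' is a radial $\SLE_\kappa$-type branch (with force-point weights determined by $r$) whose contribution to $\mathbb{E}[e^{-\lambda\tau}]$ is again an elementary trigonometric expression computed from a hypergeometric ODE. Summing the resulting renewal series $A(\theta)/(1-B(\theta))$ collapses to the ratio in~\eqref{eq:cr-23-1-intro}, with the coefficient $2\cos(\pi\tfrac{4-\kappa}{2})$ --- equal to $q-2$ --- being precisely the $q$-dependent BCLE datum. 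The moment is infinite exactly when this renewal series diverges, that is when the denominator $\sin((\kappa-1)\theta)+2\cos(\pi\tfrac{4-\kappa}{2})\sin((1-\tfrac{\kappa}{2})\theta)$ first vanishes; substituting $\theta=\tfrac{\pi}{\kappa}\sqrt{(4-\kappa)^2+8\kappa x}$ turns this into the implicit equation defining $-\lambda_0$.

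The main obstacle is the first step in the $\mathcal{L}_{B\to R}$ case: extracting from the iterated-BCLE construction the precise exploration branch, its driving diffusion, the per-step success probability, and the manner of restarting, and then checking that the renewal collapses to the single two-term formula~\eqref{eq:cr-23-1-intro} with the coefficient identified as $q-2$. Once the correct one-step ODEs and their boundary conditions (at the regular singular points of the hypergeometric equation) are in hand, solving them, locating the first pole --- hence $\lambda_0$ --- and the integrability analysis at the spectral cutoff are delicate but routine. Alternatively, one could run the entire computation through the CLE/LQG coupling, extracting the conformal radii from the explicit laws of the quantum surfaces cut out along the fuzzy Potts interfaces, with the $q$-dependence entering through the boundary weights of the relevant quantum disks.
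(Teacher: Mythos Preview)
Your proposal is correct and follows essentially the same renewal strategy as the paper: identify $\mathcal{L}_{R\to B}$ with the outermost $\CLE_\kappa$ loop and cite~\cite{ssw-radii}, and for $\mathcal{L}_{B\to R}$ use the iterated-BCLE description to write $\mathbb{E}[\mathsf{R}_{B\to R}^\lambda]=f(\lambda)/(1-g(\lambda))$ and then simplify trigonometrically, with $\lambda_0$ the first zero of the denominator.

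The only substantive difference is in how the per-step building blocks are obtained. The paper does not rederive them via radial Loewner ODEs but simply quotes the $\BCLE_\kappa(\rho)$ and $\BCLE_{\kappa'}(\rho_B')$ conformal-radius moments from~\cite{LSYZ24} (their Theorems~1.6 and~1.8), together with the $\CLE_{\kappa'}$ moment from~\cite{ssw-radii}, and then does the algebra. One point your description glosses over is that a single renewal step is not just ``a radial $\SLE_\kappa$-type branch'': following Section~\ref{subsec:construct-interface}, each step first passes through a $\BCLE_{\kappa'}^\clockwise(\rho_B')$ layer (and, on the event the origin is in a true loop, an additional $\CLE_{\kappa'}$ layer) before the $\BCLE_\kappa^\clockwise(\rho)$ layer whose true loop terminates the exploration. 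In the paper this is exactly what produces the factorized forms of $f(\lambda)$ and $g(\lambda)$ in~\eqref{eq:step1}--\eqref{eq:step2.0}; the collapse to the two-term ratio~\eqref{eq:cr-23-1-intro} with coefficient $2\cos(\pi\tfrac{4-\kappa}{2})=q-2$ then comes from product-to-sum identities after substituting $\rho=\tfrac{3\kappa}{2}-6$. Your ODE route would reproduce the same inputs, but the $\kappa/\kappa'$ alternation has to be tracked explicitly for the algebra to close.
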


\subsection{Proof strategy}\label{sec:proof}

The proof of Theorem~\ref{thm: main} proceeds in two main steps.

\emph{Step 1: Reduction to a universal constant times the imaginary DOZZ factor.}
Let $\widetilde{\mathrm{SLE}}^{\mathrm{sep}}_\kappa$ be the counting measure on blue-red interfaces (with blue on the outer side and red on the inner, see Section~\ref{subsec:construct-interface})
that separate $0$ and $\infty$ in a whole-plane continuum fuzzy Potts model, and let $\mathrm{SLE}^{\mathrm{sep}}_\kappa$ be the counting measure on loops in a whole-plane CLE$_\kappa$ that separate $0$ and $\infty$. We prove the up-to-constant identification
\begin{equation}\label{eq:equal-sleloop}
\widetilde{\mathrm{SLE}}^{\mathrm{sep}}_\kappa \;=\; \mathsf{C}(\kappa)^{-2} \,\mathrm{SLE}^{\mathrm{sep}}_\kappa,
\qquad 
\mbox{where} \quad \mathsf{C}(\kappa)=\sqrt{\frac{\mathbb{E}[\log \mathsf{R}_{R\to B}]+\mathbb{E}[\log \mathsf{R}_{B\to R}]}{\mathbb{E}[\log \mathsf{R}_{R\to B}]}} ,
\end{equation}
see Theorem~\ref{thm:ratio}. Combining this with the CLE cluster Green's function representation of~\cite{ACSW24} yields that the fuzzy Potts $n$-point Green's function equals $\mathsf{C}(\kappa)^{-2}$ times its CLE counterpart, and hence the three-point connectivity constant equals $\mathsf{C}(\kappa)$ times its CLE counterpart, namely the imaginary DOZZ constant as shown in~\cite{ACSW24}; this proves Theorem~\ref{thm: main} once $\mathsf{C}(\kappa)$ is made explicit.

The identification $\widetilde{\mathrm{SLE}}^{\mathrm{sep}}_\kappa = \mathsf{C}(\kappa)^{-2} \mathrm{SLE}^{\mathrm{sep}}_\kappa$ follows from the strategy in~\cite[Section 6]{ACSW24} by identifying both measures as stationary distributions of a Markov chain on loops. To prove the stationarity of $\widetilde{\mathrm{SLE}}^{\mathrm{sep}}_\kappa$, we also need some conformal welding arguments as input. Normalizing constants are read off from the drift of the logarithm of the conformal radius. 

\emph{Step 2: Explicit evaluation of $\mathsf{C}(\kappa)$.}
We compute the moments of the conformal radii $\mathsf{R}_{R\to B}$ and $\mathsf{R}_{B\to R}$ as stated in Proposition~\ref{prop:cr-moment-intro}, from which we extract $\mathbb{E}[\log \mathsf{R}_{R\to B}]$ and $\mathbb{E}[\log \mathsf{R}_{B\to R}]$ and hence $\mathsf{C}(\kappa)$ (Theorem~\ref{thm:value}). It turns out that $\mathsf{C}(\kappa)$ is equal to $C(q)$ in~\eqref{eq:fuzzy}. The computation of moments relies on the BCLE description of fuzzy Potts interfaces~\cite{MSW2017} and the results of~\cite{LSYZ24}.

\subsection{Outlook}\label{sec:outlook}

We list several future directions.

\begin{itemize}
    \item One natural direction is to compute the multipoint connectivity function of the $q$-state Potts model by rigorously implementing the conformal bootstrap formalism~\cite{bpz-conformal-symmetry}. As shown in Section~\ref{subsec:proof-1.2}, it coincides with the multipoint connectivity of simple CLE$_\kappa$ up to an explicit constant. Nevertheless, this problem remains open even for percolation and Ising clusters~\cite{ACSW24}. See~\cite{NRJ24} for recent progress on the physics side.
    
    \item As shown in~\cite{KW16}, the measure $\mathrm{SLE}^{\mathrm{sep}}_\kappa$ coincides with the SLE loop measure~\cite{zhan-loop-measures} on the whole plane restricted to loops separating 0 and $\infty$. Recently, it has been proved that the SLE loop measure is characterized by the \textit{conformal restriction} property~\cite{baverez2024cftsleloopmeasures, cg25}. One natural question is whether we can directly verify that $\widetilde{\mathrm{SLE}}^{\mathrm{sep}}_\kappa$ satisfies the conformal restriction property, so that we can apply~\cite{baverez2024cftsleloopmeasures, cg25} to prove~\eqref{eq:equal-sleloop}. 

    \item Another natural direction is to extend the result of Theorem~\ref{thm: main} to the fuzzy Potts model for any red probability $r \in (0,1)$. We believe that the method of~\cite{ACSW24} can treat this case, except that two inputs are missing. First, the natural measure on red clusters for general $r \in (0,1)$ has not been defined: conditional on the outer boundary of a red cluster, the interior does not have the law of a CLE$_\kappa$ carpet, so one needs to extend~\cite{miller2024existence} to this setting in order to define the continuum multipoint Green's function. Second, a key input in~\cite{ACSW24} for deriving the three-point connectivity constant is the joint distribution of quantum boundary lengths of CLE$_\kappa$ loops coupled with LQG disks~\cite{MSW22-simple}; we need a version for fuzzy Potts interfaces.
    
    \item The two-dimensional Ising and $3$-state Potts models are of particular interest due to their connections to minimal model CFTs. Minimal models are rational CFTs constructed from a finite set of representations of the Virasoro algebra, and they are classified into three series (A, D, and E)~\cite{bpz-conformal-symmetry,CFT-textbook}. Within this classification, the Ising model is the simplest case in the $A$-series, while the $3$-state Potts model is the simplest example in the $D$-series. In the Ising case, the primary fields of the minimal model CFT are the spin and energy operators; this connection has been rigorously established via discrete complex analysis~\cite{Smi10, Ising-spin, Ising-energy, CHI-ising}. By contrast, the $3$-state Potts model involves a richer collection of primary fields, and obtaining a fully rigorous solution remains a major challenge. One natural question is whether one can give a probabilistic definition of these operators in the continuum using CLE or its variants and then analyze them rigorously. Note that the connectivity operator we consider in this paper is not a primary field in the minimal model CFTs of the Ising or $3$-state Potts models, as the scaling dimensions differ. Aside from Virasoro symmetry, the 3-state Potts model also enjoys $W$-symmetry~\cite{Fateev:1987vh}. Discovering this $W$-symmetry from a probabilistic perspective is also an interesting question.
    
    \item Another natural question of the $3$-state Potts model is to describe the scaling limit of all interfaces between different spin clusters. One can still use the continuum analog of the Edwards--Sokal coupling to define all these interfaces: first sample a nested CLE$_{24/5}$ (for $q = 3$, $\kappa = 10/3$), then color each cluster independently in one of three colors and look at the spin interfaces between different colors. The interfaces exhibit bifurcations and web structures. The framework of~\cite{MSW2017} is powerful for describing interfaces between a given spin cluster and the aggregate of the other spins. One natural question is whether there is a systematic way to describe all these interfaces. Answering this question would help understand the scaling limits of web models, which also exhibit bifurcations and web structures; see e.g.~\cite{Lafay-web} and the references therein.

\end{itemize}

\noindent {\bf Organization of the paper.} In Section~\ref{sec:description}, we provide background on the fuzzy Potts model and its continuum limit. In Section~\ref{sec:sle-loop}, we prove Theorem~\ref{thm: main} up to the explicit evaluation of $\mathsf{C}(\kappa)$. In Section~\ref{sec:computation}, we prove Proposition~\ref{prop:cr-moment-intro} and compute $\mathsf{C}(\kappa)$.

\medskip
\noindent {\bf Acknowledgement.} We thank Xin Sun for enlightening discussions. We thank Raoul Santachiara for his encouragement. B.W.\ is partially supported by the National Key R\&D Program of China (No.\ 2023YFA1010700). G.C. and H.L.\ are partially supported by the National Key R\&D Program of China (No.\ 2023YFA1010700) and by the Fundamental Research Funds for the Central Universities, Peking University. Z.Z.\ is partially supported by NSF grant DMS-1953848.

 Part of this project was carried out while B.W. was visiting the Hausdorff Research Institute
 for Mathematics (HIM) in 2025,  funded by the Deutsche Forschungsgemeinschaft (DFG, German Research Foundation) under Germany's Excellence Strategy – EXC-2047/1 – 390685813.

 \section{Loop ensemble description of the fuzzy Potts model}\label{sec:description}

This section reviews the fuzzy Potts model from discrete and continuous perspectives. We begin by recalling the Potts model, FK percolation, and the Edwards-Sokal coupling in Section~\ref{subsec:edwards-sokal}. Section~\ref{subsec:pre-bcle} provides the necessary background on CLE and BCLE, which is then used in Section~\ref{subsec:construct-interface} to describe the construction of continuum fuzzy Potts interfaces.

\subsection{Critical $q$-state Potts model and fuzzy Potts model}\label{subsec:edwards-sokal}

We first review the definition of the $q$-state Potts model for $q \in \mathbb{N}$. Let $(\mathbb Z^2,E(\mathbb Z^2))$ be the planar graph with vertex set $\mathbb Z^2$ and edges between nearest neighbors; we will then simply denote this graph by $\mathbb Z^2$. Given a subgraph $G=(V,E)$ of $\mathbb Z^2$, a spin configuration on $G$ is an element $\sigma \in \{1,\ldots,q\}^V$, to which we associate the Hamiltonian with free boundary conditions
\[ H_{G,q}(\sigma)=-\sum_{u,v \in V, \{u,v\} \in E} \mathbf{1}_{\sigma_u=\sigma_v}. \]
For $\beta \ge 0$, the $q$-state Potts model on $G$ with free boundary conditions at inverse temperature $\beta$ is the Gibbs measure on $\{1,\ldots,q\}^V$ given by
\begin{equation}\label{eq:def-potts}
    \mu_{G,\beta,q}(\sigma):=\frac{1}{\mathcal{Z}_{G,\beta,q}} \exp(-\beta H_{G,q}(\sigma)),
\end{equation}
where $\mathcal{Z}_{G,\beta,q}$ is the normalizing constant so that $\mu_{G,\beta,q}$ is a probability measure.

A natural way to study the Potts model is through its coupling with FK percolation. For a subgraph $G=(V,E)$ of $\mathbb Z^2$, an edge configuration on $G$ is an element $\omega \in \{0,1\}^E$, where an edge $e \in E$ is said to be open if $\omega_e=1$, and closed otherwise. A configuration $\omega$ can be viewed as a subgraph of $G$ with vertex set $V$ and edge set $\{e \in E: \omega_e=1\}$, and we denote by $o(\omega)$ and $k(\omega)$ the number of open edges and connected components of the graph, respectively. For $p \in [0,1]$ and $q>0$, the FK percolation on $G$ with cluster weight $q$, edge weight $p$ and free boundary conditions is the probability measure on $\{0,1\}^E$ given by
\[ \phi_{G,p,q}(\omega):=\frac{1}{\mathsf Z_{G,p,q}} (\tfrac{p}{1-p})^{o(\omega)} q^{k(\omega)}, \]
where $\mathsf Z_{G,p,q}$ is the normalizing constant.

The Edwards-Sokal coupling~\cite{Edwards-Sokal} unifies the Potts model and FK percolation via a joint distribution on edge-spin configurations $(\omega,\sigma)$. For $q \in \mathbb{N}$ and $p=1-e^{-\beta}$, define the probability measure on $\{1,\ldots,q\}^V \times \{0,1\}^E$ by
\[ \nu(\omega,\sigma):=\frac{1}{Z} (\tfrac{p}{1-p})^{o(\omega)} \mathbf 1_A, \]
where $Z$ is a normalizing constant and $A$ is the event that $\sigma_u=\sigma_v$ for any edge $e=\{u,v\}$ with $\omega_e=1$. The marginal of $\nu$ on spins is the Potts measure $\mu_{G,\beta,q}$, and its marginal on edges is the FK measure $\phi_{G,p,q}$. Consequently, the Potts measure $\mu_{G,\beta,q}$ can be recovered by first sampling an edge configuration $\omega$ from $\phi_{G,p,q}$ and then assigning a uniform spin from $\{1,\ldots,q\}$ to each connected component of $\omega$ independently.

Let $\partial V=\{v \in V:\deg_G(v)<4\}$ denote the vertex boundary. For a non-empty subset $\xi \subseteq \{1,\ldots,q\}$, the Potts model with boundary conditions $\xi$, denoted $\mu_{G,\beta,q}^\xi(\sigma)$, is defined by restricting the measure $\mu_{G,\beta,q}$ in~\eqref{eq:def-potts} to spin configurations satisfying $\sigma_v \in \xi$ for all $v \in \partial V$. When $\xi=\{i\}$ is a singleton, the Potts model with monochromatic boundary conditions $\{i\}$ also admits a coupling to FK percolation, now with \emph{wired} boundary conditions; see~\cite{Grimmett-rcm,BDC-notes,DC-notes} for details.

For $i \in \{1,\ldots,q\}$, as the subgraph $G$ tends to $\mathbb Z^2$, the measures $\mu_{G,\beta,q}^{\{i\}}$ converge weakly to a Gibbs measure $\mu_{\mathbb Z^2,\beta,q}^{\{i\}}$, called the \emph{infinite-volume} Potts measure with monochromatic boundary conditions $\{i\}$. For $q \ge 1$, the model undergoes a phase transition at the critical inverse temperature $\beta_c(q)=\log(1+\sqrt{q})$~\cite{BDC12}: below $\beta_c$, we have $\mu_{\mathbb Z^2,\beta,q}^{{i}}[\sigma_0=i]=1/q$, while above it, $\mu_{\mathbb Z^2,\beta,q}^{{i}}[\sigma_0=i]>1/q$. Moreover, this phase transition is continuous for $q \in [1,4]$~\cite{DCST17}. The critical $q$-state Potts model refers to the case $\beta=\beta_c(q)$, and the corresponding FK$_q$ percolation has its critical point at $p_c(q)=1-e^{-\beta_c(q)}=\sqrt{q}/(1+\sqrt{q})$.

The Edwards-Sokal coupling motivates the definition of the fuzzy Potts model, a generalization of the Potts model where vertices are colored red or blue instead of assigned one of $q$ spins. In this paper, we focus on the case where we start with a \emph{critical} FK percolation. For a subgraph $G=(V,E)$ of $\mathbb Z^2$, a vertex configuration on $G$ is an element $\tau \in \{R,B\}^V$, where $\tau_v=R$ (resp.\ $\tau_v=B$) indicates that the vertex $v$ is red (resp.\ blue). For $q>0$ (not necessarily an integer) and $r \in (0,1)$, the fuzzy Potts model on $G$ with parameters $(q,r)$ and free boundary conditions is defined by the following two-step procedure:
\begin{enumerate}[(1)]
    \item Sample an edge configuration $\omega$ from the critical FK$_q$ measure $\phi_{G,p_c,q}$ with free boundary conditions.
    \item Independently color each connected component of $\omega$ in red with probability $r$ and in blue with probability $1-r$; the model is the marginal distribution on the vertex colors.
\end{enumerate}

The fuzzy Potts measure with red (resp.\ blue) boundary conditions is defined analogously using the critical FK$_q$ measure with wired boundary conditions, with the additional constraint that any boundary-touching cluster is forced to be red (resp.\ blue); see~\cite{KSL22,LSYZ24} for details.

When $q \in \mathbb{N}$ and $r=k/q$ for some $k \in \{1,\ldots,q-1\}$, the fuzzy Potts measure coincides with the law of the two-color projection of a critical $q$-state Potts configuration $\sigma$. Specifically, if we set $\tau_v=R$ for $\sigma_v \in \{1,\ldots,k\}$ and $\tau_v=B$ otherwise, the resulting distribution on $\tau$ is exactly the fuzzy Potts model with parameters $(q, r)$.

\subsection{Preliminaries of CLE, BCLE, and CLE percolations}\label{subsec:pre-bcle}

In this section, we recall the key definitions and properties of the \emph{conformal loop ensemble} (CLE) and its variant, the \emph{boundary conformal loop ensemble} (BCLE).

For $\kappa \in (8/3,8)$, the non-nested CLE$_\kappa$ is a conformally invariant random collection of non-crossing loops in which no loop surrounds another~\cite{shef-cle,shef-werner-cle}. Each loop is a Schramm's SLE$_\kappa$-type curve. When $\kappa \in (8/3,4]$, the loops are almost surely simple and does not intersect the boundary of the domain or each other. For $\kappa \in (4,8)$, the loops are nonsimple and may touch (but not cross) themselves and each other. CLE$_\kappa$ can be constructed via the SLE$_\kappa(\kappa-6)$ branching tree~\cite{SW05,shef-cle}; additionally, for $\kappa \in (8/3,4]$, it admits a Brownian loop soup construction~\cite{shef-werner-cle}. A nested version of CLE$_\kappa$ is constructed by iterating the non-nested CLE$_\kappa$ process within each simply connected component of the complement of the loop ensemble.

It is widely believed that CLE describes the scaling limit of interfaces in various critical statistical mechanics models. Specifically, for $q \in \{2,3\}$, the collection of outermost interfaces in a critical $q$-state Potts model with monochromatic boundary conditions is conjectured to converge to a non-nested simple CLE$_\kappa$. Furthermore, for $q \in (0,4]$, the interfaces of critical FK$_q$ percolation are expected to converge to a nested CLE$_{\kappa'}$ with $\kappa' \in [4,8)$.  
So far, these conjectures have only been verified for the Ising and FK-Ising models ($q=2$, $\kappa=3$ and $\kappa'=16/3$)~\cite{BH19,Smi10,KS16,KS19}.

For $\kappa \in (2,4]$ and $\rho \in (-2,\kappa-4)$ or $\kappa \in (4,8)$ and $\rho \in (\kappa/2-4,\kappa/2-2)$, the BCLE$_\kappa(\rho)$ is a conformally invariant collection of boundary-touching loops in simply connected domains~\cite{MSW2017}. Its construction is similar to that of CLE$_\kappa$ but uses an SLE$_\kappa(\rho;\kappa-6-\rho)$ branching tree $\mathcal T$ instead of SLE$_\kappa(\kappa-6)$. The branches of $\mathcal T$ are naturally oriented from the root towards all other boundary points, inducing an orientation on the boundaries of the complementary components that form either clockwise or counterclockwise loops. The collection of clockwise loops defines $\BCLE^\clockwise_\kappa(\rho)$; these are referred to as the true loops. The remaining components, not surrounded by a true loop, have boundaries that form counterclockwise loops, which are called the false loops of $\BCLE^\clockwise_\kappa(\rho)$. By reversing the orientation of every loop in $\BCLE^\clockwise_\kappa(\rho)$, we obtain the collection $\BCLE^\counterclockwise_\kappa(\rho)$ of counterclockwise loops, whose false loops are clockwise. Equivalently, $\BCLE^\counterclockwise_\kappa(\rho)$ has the same law as the false loops of $\BCLE^\clockwise_\kappa(\kappa-6-\rho)$.

BCLEs are expected to describe the scaling limit of critical models with special boundary conditions. For a fixed $\kappa$, each parameter $\rho$ corresponds to a distinct boundary condition, which can be interpreted as weighting boundary-touching loops relative to interior loops.

For $\beta \in (-1,1)$, the labeled CLE$_\kappa^\beta$ is an oriented version of the non-nested CLE$_\kappa$ in which each loop is independently oriented counterclockwise with probability $(1+\beta)/2$ and clockwise with probability $(1-\beta)/2$. The following CLE percolation result, which provides an iterative construction of labeled CLE using BCLEs, is a combination of Theorems 7.2 and 7.7 in~\cite{MSW2017}.

\begin{theorem}\label{thm:CLE-percolation}
     For each $\kappa \in (2,4)$ and $\beta \in (-1,1)$, there exists $\rho=\rho(\beta,\kappa) \in (-2,\kappa-4)$ such that the following holds. Let $\kappa'=16/\kappa \in (4,8)$ and define
\begin{equation}\label{eq:parameter-nonsimple}
    \rho_R'=-\frac{\kappa'}{2}-\frac{\kappa'}{4} \rho, \quad \rho_B'=\kappa'-4+\frac{\kappa'}{4} \rho.
\end{equation}
The labeled $\CLE_{\kappa'}^\beta$ in a simply connected domain $D \subsetneq \mathbb C$ can then be constructed by iterating $\BCLE^\clockwise_\kappa(\rho)$, $\BCLE^\counterclockwise_{\kappa'}(\rho_R')$, and $\BCLE^\clockwise_{\kappa'}(\rho_B')$. Specifically, starting with $\Gamma^\clockwise = \Gamma^\counterclockwise = \emptyset$, the iteration proceeds as follows:
\begin{enumerate}[label=(\arabic*)]
    \item Sample $\Lambda \sim \BCLE^\clockwise_\kappa(\rho)$ in $D$.
    \item In the domains enclosed by clockwise true loops (resp.\ counterclockwise false loops) of $\Lambda$, independently sample $\BCLE^\counterclockwise_{\kappa'}(\rho_R')$ (resp.\ $\BCLE^\clockwise_{\kappa'}(\rho_B')$). Add the counterclockwise true loops of $\BCLE^\counterclockwise_{\kappa'}(\rho_R')$ to $\Gamma^\counterclockwise$, and the clockwise true loops of $\BCLE^\clockwise_{\kappa'}(\rho_B')$ to $\Gamma^\clockwise$.
    \item Iterate the previous two steps independently in every simply connected domain not enclosed by any loop in $\Gamma^\clockwise \cup \Gamma^\counterclockwise$. (These domains correspond to the interiors of false loops of $\BCLE_{\kappa'}^{\counterclockwise}(\rho_R')$ or $\BCLE_{\kappa'}^{\clockwise}(\rho_B')$ in the previous step.)
\end{enumerate}
The resulting collection $\Gamma = \Gamma^\clockwise \cup \Gamma^\counterclockwise$ is equal in law to a labeled $\CLE_{\kappa'}^\beta$ on $D$. Moreover, the parameter $\rho$ is determined by $\beta$ and $\kappa$ through the relation
\begin{equation}\label{eq:relation-nonsimple}
    \frac{1-\beta}{2}=\frac{\sin(\pi \rho/2)}{\sin(\pi \rho/2)-\sin(\pi (\kappa-\rho)/2)}.
\end{equation}
\end{theorem}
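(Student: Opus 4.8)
\textbf{Proof proposal for Theorem~\ref{thm:CLE-percolation}.} The plan is to reconstruct the branching‑SLE argument of Miller--Sheffield--Werner in two stages: (i) establish the iterative $\BCLE$ construction as a structural coupling statement, and (ii) pin down the parameters $\rho(\beta,\kappa)$, $\rho_R'$, $\rho_B'$ by matching a one‑parameter family of boundary observables.

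\emph{Stage (i): the structure.} Both sides admit branching‑exploration descriptions: a non‑nested $\CLE_{\kappa'}$ is generated by an $\SLE_{\kappa'}(\kappa'-6)$ branching tree, while a $\BCLE_\kappa(\rho)$ is generated by an $\SLE_\kappa(\rho;\kappa-6-\rho)$ branching tree. Starting from a labeled $\CLE_{\kappa'}^\beta$ in $D$, I would trace, from a marked boundary point, the interface separating the set of points ``locally enclosed by clockwise loops together with the unexplored bulk on their side'' from the ``counterclockwise side,'' and more generally the full branching tree obtained by re‑rooting the exploration at every boundary point. One then checks that this tree has the target‑independent conformal Markov property with force points immediately to the left and right of the seed, so that it is an $\SLE_\kappa(\rho;\kappa-6-\rho)$ tree, i.e. a $\BCLE_\kappa(\rho)$, call it $\Lambda$; its clockwise true loops bound the ``red'' regions and its counterclockwise false loops the ``blue'' regions. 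Conditionally on $\Lambda$, inside a true loop the residual configuration is a labeled non‑nested $\CLE_{\kappa'}$ whose outermost loops are forced to carry the orientation making them true loops of a $\BCLE^\counterclockwise_{\kappa'}(\rho_R')$ (and symmetrically a $\BCLE^\clockwise_{\kappa'}(\rho_B')$ inside a false loop); the true loops of these inner BCLEs are genuine $\CLE_{\kappa'}$ loops, and their false loops carry the recursion. Iterating recovers the nested picture, and a Kolmogorov‑extension‑type consistency check shows the resulting $\Gamma^\clockwise \cup \Gamma^\counterclockwise$ is a labeled $\CLE_{\kappa'}^\beta$.

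\emph{Stage (ii): the parameters.} The relation \eqref{eq:relation-nonsimple} is obtained by computing in two ways the probability that a prescribed boundary point of $D$ lies on the blue side of $\Lambda$: by definition of the labeling it equals $(1-\beta)/2$, whereas on the $\BCLE_\kappa(\rho)$ side it equals $\frac{\sin(\pi\rho/2)}{\sin(\pi\rho/2)-\sin(\pi(\kappa-\rho)/2)}$, read off from the boundary left/right‑passage probabilities of $\SLE_\kappa(\rho;\kappa-6-\rho)$. For \eqref{eq:parameter-nonsimple} one matches the boundary weighting induced on the inner $\CLE_{\kappa'}$: the outer boundary of a $\CLE_{\kappa'}$ loop seen from inside a true (resp.\ false) loop of $\Lambda$ is an $\SLE_\kappa$‑type curve by duality, and the $\kappa\leftrightarrow\kappa'$, $\rho\leftrightarrow\rho'$ dictionary (via the outer‑boundary/quantum‑length correspondence for $\SLE_\kappa(\rho)$) forces $\rho_R' = -\tfrac{\kappa'}{2}-\tfrac{\kappa'}{4}\rho$ and $\rho_B' = \kappa'-4+\tfrac{\kappa'}{4}\rho$.

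\emph{Main obstacle.} The crux is the identification in Stage (i) that the clockwise/counterclockwise interface exploration of a labeled $\CLE_{\kappa'}^\beta$ is \emph{exactly} a $\BCLE_\kappa(\rho)$ branching tree: this requires establishing the target‑independent conformal Markov property of the interface, carefully tracking the force points as the exploration swallows loops of the opposite label, and invoking $\SLE_\kappa/\SLE_{\kappa'}$ duality to identify the outer boundaries of the non‑simple loops with simple $\SLE_\kappa$‑type curves. Once this identification is in hand, the recursion bookkeeping and the parameter matching are comparatively routine.
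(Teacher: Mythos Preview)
The paper does not prove this theorem: it is quoted as a combination of Theorems~7.2 and~7.7 of \cite{MSW2017}, with the relation \eqref{eq:relation-nonsimple} attributed to \cite{MSW21-nonsimple} (and an alternative derivation via BCLE winding probabilities to \cite{LSYZ24}). So there is no in-paper argument to compare against; the relevant comparison is with the cited sources.

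Your Stage~(ii) essentially matches the \cite{LSYZ24} derivation of \eqref{eq:relation-nonsimple}, and the duality heuristic for \eqref{eq:parameter-nonsimple} points in the right direction (in \cite{MSW2017} these formulas are read off from the imaginary-geometry boundary data rather than a quantum-length correspondence). The genuine gap is in Stage~(i). The proof in \cite{MSW2017} does not proceed by exploring the labeled $\CLE_{\kappa'}^\beta$ and directly verifying a target-independent conformal Markov property for the resulting interface tree; rather, both the $\CLE_{\kappa'}$ (through its $\SLE_{\kappa'}(\kappa'-6)$ branching tree) and the candidate $\BCLE_\kappa(\rho)$ are coupled to a \emph{common Gaussian free field} as counterflow and flow lines in the imaginary-geometry framework of Miller--Sheffield, and every identification you need---including the $\SLE_\kappa/\SLE_{\kappa'}$ duality---is then a consequence of reading off GFF boundary data and invoking local-set measurability. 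You correctly flag the conformal-Markov identification as the main obstacle, but the proposal offers no mechanism to establish it: showing that the conditional law of the remaining colored loops, given a partial exploration of the red/blue interface, is again a labeled $\CLE_{\kappa'}^\beta$ in the complementary domain has no known direct proof absent the GFF coupling. The missing ingredient is imaginary geometry, and without it Stage~(i) is a restatement of the goal rather than a route to it.
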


The relation~\eqref{eq:relation-nonsimple} was established in~\cite{MSW21-nonsimple}; an alternative proof based on the winding probability of BCLEs is given in Section 2.2 of~\cite{LSYZ24}.

Theorem~\ref{thm:CLE-percolation} can be viewed as the continuous analog of the Edwards-Sokal coupling between the $q$-state Potts model and the FK$_q$ percolation for $q \in (0,4)$. For $q \in [1,4)$, under the assumption that FK$_q$ percolation interfaces converge to nested CLE$_{\kappa'}$ (where $\kappa'=4\pi/\arccos(-\sqrt{q}/2) \in (4,6]$), it was shown in~\cite[Theorem 4.2]{KSL22} (see also Section 2.6 therein) that the scaling limit of the fuzzy Potts (with red probability $r=\frac{1+\beta}{2}$) interfaces coincides with the collection of $\BCLE_\kappa$-type loops constructed in Theorem~\ref{thm:CLE-percolation}. This limiting object---a conformally invariant collection of loops---is referred to as the continuum fuzzy Potts model on the simply connected domain $D$. Its whole-plane variant is defined as the weak limit of this model as $D$ tends to $\mathbb{C}$, analogous to the definition of whole-plane CLE in~\cite{KW16}.

\subsection{Description of spin interfaces via BCLE}\label{subsec:construct-interface}

We now describe the construction of continuum fuzzy Potts interfaces for the special case $r=1/q$ (which corresponds to ordinary Potts model) using Theorem~\ref{thm:CLE-percolation}. For the purpose of this paper, we focus on the countable collection of interfaces that separate a fixed point (say, the origin) from $\infty$. Label these loops as $(\mathcal L^k)_{k \in \mathbb N}$ (or $(\mathcal L^k)_{k \in \mathbb Z}$ in the whole-plane case) such that $\mathcal L^{k+1}$ is surrounded by $\mathcal L^k$. Each $\mathcal{L}^k$ is either a red-blue interface (red on the outer side and blue on the inner) or a blue-red interface (blue on the outer side and red on the inner), and these types alternate with $k$.

In the following, we will describe the conditional law of $\mathcal L^{k+1}$ given $\mathcal L^k$. Observe that $\mathcal L^{k+1}$ can be interpreted as an outermost interface in a continuum fuzzy Potts model on the domain enclosed by $\mathcal L^k$ under red or blue boundary conditions (as explained in the proof of Theorem 7.10 of~\cite{MSW2017}). By conformal invariance, it suffices to characterize the law of the following two loops:
\begin{enumerate}[(a)]
    \item The outermost red-blue interface surrounding the origin in a continuum fuzzy Potts model on $\mathbb{D}$ with red boundary conditions, denoted by $\mathcal L_{R \to B}$.
    \item The outermost blue-red interface surrounding the origin in a continuum fuzzy Potts model on $\mathbb{D}$ with blue boundary conditions, denoted by $\mathcal L_{B \to R}$.
\end{enumerate}

Let $q \in (1,4)$, $\kappa=4\arccos(-\sqrt{q}/2)/\pi \in (8/3,4)$, and $\kappa'=16/\kappa$. Define $\rho=\frac{3\kappa}{2}-6$ which solves~\eqref{eq:relation-nonsimple} with $r=\frac{1+\beta}{2}=\frac{1}{q}$, and let $\rho_B'=\kappa'-4+\frac{\kappa'}{4} \rho=2-\frac{\kappa'}{2}$ as in~\eqref{eq:parameter-nonsimple}. By Theorem~\ref{thm:CLE-percolation}, the outermost interface $\mathcal L_{B \to R}$ can be explored as follows:\footnote{We remind the readers that this exploration is different from that described in~\cite{LSYZ24}, since we are working on monochromatic boundary conditions rather than free boundary conditions. In this case, we first explore the boundary-touching FK clusters, whose boundaries correspond to $\BCLE_{\kappa'}(\rho_B')$ in the continuum.}

\noindent \textit{Step 1.} Sample $\Xi_0' \sim \BCLE_{\kappa'}^\clockwise(\rho_B')$ in $D_0:=\mathbb{D}$, and define the domain $D_1'$ according to two cases:
\begin{itemize}
    \item If the origin is enclosed by a true loop $\eta_0'$ of $\Xi_0'$, sample a non-nested CLE$_{\kappa'}$ process $\Gamma_0'$ in the connected component of $D_0 \setminus \eta_0'$ containing the origin. The origin is a.s. enclosed by a unique loop $\widetilde \eta_0$ of $\Gamma_0'$. Define $D_1'$ to be the connected component of $D_0 \setminus \widetilde\eta_0$ containing the origin.
    \item If the origin is enclosed by a false loop $\eta_0'^*$, define $D_1'$ to be the connected component of $D_0 \setminus \eta_0'^*$ containing the origin.
\end{itemize}
\noindent \textit{Step 2.} Sample $\Xi_1 \sim \BCLE^\clockwise_\kappa(\rho)$ in $D_1'$, and proceed as follows:
\begin{itemize}
    \item If the origin is enclosed by a true loop $\eta_1$ of $\Xi_1$, set $\mathcal L_{B \to R}=\eta_1$ and \textit{stop}.
    \item If the origin is enclosed by a false loop $\eta_1^*$, define $D_1$ to be the simply connected component of $D_1' \setminus \eta_1^*$ containing the origin. Return to \textit{Step 1} and continue exploring within $D_1$ instead of $D_0$, increasing all indices by 1.
\end{itemize}

Since the termination probability $\mathbb{P}[0 \in \BCLE^\clockwise_\kappa(\rho)]$ is strictly positive, the exploration almost surely terminates after finitely many iterations, and the interface $\mathcal{L}_{B \to R}$ is almost surely well-defined.

The outermost interface $\mathcal L_{R \to B}$ can be constructed analogously, but using instead $\rho=-\frac{\kappa}{2}$ which satisfies~\eqref{eq:relation-nonsimple} for $r=\frac{1+\beta}{2}=1-\frac{1}{q}$ (due to color switching). Alternatively, by combining~\cite[Theorem 7.10]{MSW2017} and~\cite[Theorem 1.2]{MSW21-nonsimple}, under red boundary conditions, the red clusters touching the boundary form a $\CLE_\kappa$ carpet, and thus $\mathcal L_{R \to B}$ is exactly the unique loop in the non-nested $\CLE_\kappa$ that surrounds the origin.

\section{Proof of Theorem~\ref{thm: main} via loop equivalence}\label{sec:sle-loop}

Suppose $\omega$ is sampled from a whole-plane continuum fuzzy Potts model with parameters $q \in (1,4)$ and $r=1/q$. Let $\widetilde{\SLE}_\kappa^\lp$ be the law of the loop sampled from the counting measure on the collection of blue-red interfaces. Let $\SLE_\kappa^\lp$ be the law of the loop sampled from the counting measure on the full-plane $\CLE_\kappa$. Then define $\widetilde{\SLE}_\kappa^\sep$, $\SLE_\kappa^\sep$ to be the restrictions of $\widetilde{\SLE}_\kappa^\lp$ and $\SLE_\kappa^\lp$ to the loops separating $0$ and $\infty$, respectively. According to~\cite{KW16}, the measure $\SLE_\kappa^\sep$ is the same as the $\SLE_\kappa$ loop measure defined in~\cite{zhan-loop-measures}, restricted to loops separating $0$ and $\infty$ (see also~\cite{ACSW-loop} for another proof using couplings of SLE and Liouville quantum gravity).

The main result of this section is the following up-to-constant equivalence of $\widetilde{\SLE}_\kappa^\sep$ and $\SLE_\kappa^\sep$.
\begin{theorem}\label{thm:ratio}
We have $\wt\SLE_\kappa^\sep=\mathsf{C}(\kappa)^{-2} \,\SLE_\kappa^\sep$, with the positive constant $\mathsf{C}(\kappa)$ given by
\begin{equation}\label{eq:c-kappa}
    \mathsf{C}(\kappa):=\sqrt{\frac{\mathbb{E}[\log \mathsf{R}_{R\to B}]+\mathbb{E}[\log \mathsf{R}_{B\to R}]}{\mathbb{E}[\log \mathsf{R}_{R\to B}]}}.
\end{equation}
\end{theorem}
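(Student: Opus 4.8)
The plan is to follow the strategy of \cite[Section~6]{ACSW24}: realize both $\SLE_\kappa^\sep$ and $\widetilde{\SLE}_\kappa^\sep$ as constant multiples of a single canonical object---the whole-plane $\SLE_\kappa$ loop measure restricted to loops separating $0$ and $\infty$---by exhibiting each as a stationary measure for a natural Markov chain on such loops. Uniqueness of the stationary measure then forces $\widetilde{\SLE}_\kappa^\sep$ and $\SLE_\kappa^\sep$ to be proportional, and the proportionality constant will be extracted from the average rate at which $\log\CR(0,\cdot)$ decreases along the chain; this is what is meant by reading off the normalization ``from the drift of the logarithm of the conformal radius''.

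In detail, we work with the Markov chain whose state is a loop $\eta$ separating $0$ and $\infty$, recorded modulo dilations $z\mapsto\lambda z$ (equivalently, normalized so that $\CR(0,D(\eta))=1$). On the CLE side the transition outputs the next $\CLE_\kappa$ loop around $0$ inside $D(\eta)$; by Section~\ref{subsec:construct-interface} this single step is exactly the $\mathcal{L}_{R\to B}$-step, so it multiplies $\CR(0,\cdot)$ by an independent copy of $\mathsf{R}_{R\to B}$, with mean log-decrement $-\E[\log\mathsf{R}_{R\to B}]$. On the fuzzy Potts side we take the chain on blue-red interfaces separating $0$ and $\infty$: starting from such an interface $\eta$ (which carries red on its inner side), the transition first produces the outermost red-blue interface around $0$ inside $D(\eta)$ (an $\mathcal{L}_{R\to B}$-step) and then the outermost blue-red interface around $0$ inside that loop (an $\mathcal{L}_{B\to R}$-step, the domain now being blue-bounded), so one full step multiplies $\CR(0,\cdot)$ by $\mathsf{R}_{R\to B}\mathsf{R}_{B\to R}$, with mean log-decrement $-\E[\log\mathsf{R}_{R\to B}]-\E[\log\mathsf{R}_{B\to R}]$. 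Both chains are Markov by the domain Markov property of CLE and BCLE, are dilation invariant, and admit a unique stationary distribution because each step decreases $\log\CR$ by a well-behaved amount of strictly negative mean. Disintegrating $\SLE_\kappa^\sep$ and $\widetilde{\SLE}_\kappa^\sep$ along $\CR(0,\cdot)$, dilation invariance and stationarity identify the ``shape'' marginals with the respective stationary distributions, and a renewal/Palm computation identifies the intensity in the $\log\CR$ variable as the reciprocal of the mean log-decrement per step. Taking the ratio gives
\begin{equation*}
\frac{\widetilde{\SLE}_\kappa^\sep}{\SLE_\kappa^\sep}=\frac{-\E[\log\mathsf{R}_{R\to B}]}{-\E[\log\mathsf{R}_{R\to B}]-\E[\log\mathsf{R}_{B\to R}]}=\mathsf{C}(\kappa)^{-2}.
\end{equation*}

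The crux---and the step we expect to be the main obstacle---is proving that $\widetilde{\SLE}_\kappa^\sep$ is genuinely stationary for its chain, and that a typical blue-red interface under it has the same shape as a typical $\CLE_\kappa$ loop under $\SLE_\kappa^\sep$. For $\SLE_\kappa^\sep$ the corresponding stationarity can be derived from the conformal welding of quantum disks along simple $\CLE_\kappa$ loops \cite{MSW22-simple} together with \cite{KW16}; the plan is to build the analogous welding picture for the fuzzy Potts interfaces. We would decorate the continuum fuzzy Potts configuration with an independent $\gamma$-LQG field, $\gamma=\sqrt{\kappa}$, use the $\CLE_\kappa$--LQG welding of \cite{MSW22-simple} across each $\mathcal{L}_{R\to B}$-step (the intervening red $\CLE_\kappa$ carpet), perform a further welding across the intervening blue region for each $\mathcal{L}_{B\to R}$-step, and then invoke the BCLE description of the fuzzy Potts interfaces from \cite{MSW2017,LSYZ24} to check that the composite weld is invariant under the Markov transition. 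This invariance is exactly the required stationarity, and matching the quantum lengths along the way forces the shape of a blue-red interface under $\widetilde{\SLE}_\kappa^\sep$ to coincide with that of a $\CLE_\kappa$ loop under $\SLE_\kappa^\sep$. The explicit identification $\mathsf{C}(\kappa)=C(q)$ is then a separate computation, carried out in Section~\ref{sec:computation}: differentiating the conformal radius moment formulas of Proposition~\ref{prop:cr-moment-intro} at $\lambda=0$ produces $\E[\log\mathsf{R}_{R\to B}]$ and $\E[\log\mathsf{R}_{B\to R}]$, which one substitutes into \eqref{eq:c-kappa}.
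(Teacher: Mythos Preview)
Your plan is essentially the paper's own route: define the Markov chain on dilation-normalized loops given by the two-step transition $\mathcal{L}_{R\to B}$ followed by $\mathcal{L}_{B\to R}$, identify the shape measures of $\widetilde{\SLE}_\kappa^\sep$ and $\SLE_\kappa^\sep$ via stationarity, and read off the ratio of intensities from the mean $\log\CR$-decrement per step (this is the paper's Lemma~\ref{lem:shape}). The welding argument you sketch---weld a quantum disk across the $\mathcal{L}_{B\to R}$-step using the annular quantum surface built from the BCLE weldings of \cite{LSYZ24}, then use the symmetry of that annulus under swapping its two boundary components to conclude that the resulting interface law is again an $\SLE_\kappa$ loop---is exactly the content of Proposition~\ref{prop:stationary-bcle-simple} and Lemmas~\ref{lem:weld-blue-red}--\ref{lem: QA(W) welding}.

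Two points to sharpen. First, your assertion that the chain ``admit[s] a unique stationary distribution because each step decreases $\log\CR$ by a well-behaved amount of strictly negative mean'' is not a proof: positive drift in the radial coordinate says nothing about uniqueness of the \emph{shape} distribution. In the paper this is handled separately (Proposition~\ref{prop:markov}) by a coupling argument---couple the whole-plane $\CLE_{\kappa'}$ with a $\CLE_{\kappa'}$ in a large disk so that they agree inside some inner loop, transfer the coloring, and conclude that the two chains of blue-red interfaces around $0$ eventually coincide; this gives total-variation convergence from any initial loop and hence uniqueness. Second, the logical order is slightly off in your sketch. Stationarity of $\widetilde{\mathcal{L}}_\kappa$ for the fuzzy Potts chain is essentially tautological (the chain is constructed from the whole-plane fuzzy Potts model, whose interfaces define $\widetilde{\SLE}_\kappa^\sep$); the welding is used to show that the $\SLE_\kappa$-loop shape $\mathcal{L}_\kappa$ is \emph{also} stationary for that \emph{same} fuzzy Potts chain. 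Uniqueness then forces $\mathcal{L}_\kappa=\widetilde{\mathcal{L}}_\kappa$, and Lemma~\ref{lem:shape} converts this into the claimed relation between $\SLE_\kappa^\sep$ and $\widetilde{\SLE}_\kappa^\sep$. Running two separate chains, as you describe, would not by itself link the two shape measures.
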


We will further derive the explicit value of $\mathsf{C}(\kappa)$ in Section~\ref{sec:computation}.
\begin{theorem}\label{thm:value}
    We have $\mathsf{C}(\kappa)=\sqrt{\kappa/2}\cdot\frac{\sin(\kappa\pi/2)}{\sin(4\pi/\kappa)}$ for $\kappa \in (8/3,4)$.
\end{theorem}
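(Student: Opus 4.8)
The plan is to read off $\mathbb E[\log\mathsf R_{R\to B}]$ and $\mathbb E[\log\mathsf R_{B\to R}]$ from the conformal-radius moment formulas in Proposition~\ref{prop:cr-moment-intro} by differentiating at $\lambda=0$, and then substitute into the definition~\eqref{eq:c-kappa} of $\mathsf C(\kappa)$. To justify the differentiation: both $\mathcal L_{R\to B}$ and $\mathcal L_{B\to R}$ lie in $\mathbb D$, so $\mathsf R_{R\to B},\mathsf R_{B\to R}\le 1$ and the log-moments are non-positive; moreover, for $\kappa\in(8/3,4)$ the thresholds $\frac2\kappa+\frac{3\kappa}{32}-1$ and $\lambda_0$ of Proposition~\ref{prop:cr-moment-intro} are both strictly negative, as one checks directly from their definitions, so~\eqref{eq:cr-1-23-intro} and~\eqref{eq:cr-23-1-intro} are valid on an open $\lambda$-interval containing $0$. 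Hence $\lambda\mapsto\mathbb E[\mathsf R^\lambda]=\mathbb E[e^{\lambda\log\mathsf R}]$ is finite near $0$, therefore real-analytic there, and $\mathbb E[\log\mathsf R]=\frac{d}{d\lambda}\big|_{\lambda=0}\mathbb E[\mathsf R^\lambda]$; this is the only non-trigonometric input.

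Write $\theta(\lambda)=\frac\pi\kappa\sqrt{(4-\kappa)^2-8\kappa\lambda}$, so $\theta(0)=\frac{\pi(4-\kappa)}\kappa$ and $\theta'(0)=-\frac{4\pi}{4-\kappa}$. For the first curve, differentiating $\cos(\frac{\pi(4-\kappa)}\kappa)/\cos\theta(\lambda)$ gives
\[
\mathbb E[\log\mathsf R_{R\to B}]=-\tfrac{4\pi}{4-\kappa}\tan\!\big(\tfrac{\pi(4-\kappa)}\kappa\big)=-\tfrac{4\pi}{4-\kappa}\cdot\tfrac{2\sin^2(4\pi/\kappa)}{\sin(8\pi/\kappa)}.
\]
For the second curve I would apply the same chain rule to~\eqref{eq:cr-23-1-intro}, which requires evaluating the numerator $N(\theta)$ and denominator $D(\theta)$ of the rational trigonometric factor, together with $N'$ and $D'$, at $\theta(0)$. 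Reducing the arguments $(\kappa-2)\theta(0),(\kappa-1)\theta(0),\dots$ modulo $2\pi$ and using product-to-sum identities, I expect the clean evaluations $N(\theta(0))=\sin(8\pi/\kappa)$, $D(\theta(0))=-\sin(4\pi/\kappa)$, and --- after a short computation invoking $1-\cos(\kappa\pi)=2\sin^2(\kappa\pi/2)$ and $\sin(3x)=\sin x\,(4\cos^2x-1)$ --- the key simplification $(N'D-ND')\big|_{\theta(0)}=\sin(4\pi/\kappa)\big(2\sin^2(4\pi/\kappa)-\kappa\sin^2(\kappa\pi/2)\big)$. Dividing by $D(\theta(0))^2=\sin^2(4\pi/\kappa)$, then incorporating $\theta'(0)$ and the prefactor $\frac{1}{2\cos(\pi(4-\kappa)/\kappa)}=-\frac{1}{2\cos(4\pi/\kappa)}$, this yields
\[
\mathbb E[\log\mathsf R_{B\to R}]=\tfrac{4\pi}{4-\kappa}\cdot\tfrac{2\sin^2(4\pi/\kappa)-\kappa\sin^2(\kappa\pi/2)}{\sin(8\pi/\kappa)}.
\]

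Finally, substituting into $\mathsf C(\kappa)^2=1+\mathbb E[\log\mathsf R_{B\to R}]/\mathbb E[\log\mathsf R_{R\to B}]$ and cancelling the common factor $\frac{4\pi}{(4-\kappa)\sin(8\pi/\kappa)}$, one obtains $\mathsf C(\kappa)^2=1+\big(-1+\frac{\kappa\sin^2(\kappa\pi/2)}{2\sin^2(4\pi/\kappa)}\big)=\frac\kappa2\cdot\frac{\sin^2(\kappa\pi/2)}{\sin^2(4\pi/\kappa)}$. For $\kappa\in(8/3,4)$ one has $\kappa\pi/2\in(4\pi/3,2\pi)$ and $4\pi/\kappa\in(\pi,3\pi/2)$, so $\sin(\kappa\pi/2)$ and $\sin(4\pi/\kappa)$ are both negative; taking the square root with the correct sign gives $\mathsf C(\kappa)=\sqrt{\kappa/2}\cdot\frac{\sin(\kappa\pi/2)}{\sin(4\pi/\kappa)}$, which is exactly $C(q)$ from~\eqref{eq:fuzzy}. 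The main obstacle is purely computational: carrying out the differentiation of the more intricate formula~\eqref{eq:cr-23-1-intro} and checking that its trigonometric expression collapses as stated; the justification of differentiation, the $\mathsf R_{R\to B}$ computation, and the final cancellation are all routine. (As a sanity check, at $\kappa=3$ the factor $2\cos(\pi\tfrac{4-\kappa}{2})$ vanishes, both log-moments equal $-4\sqrt3\pi$, and $\mathsf C(3)=\sqrt2$, matching $C(q{=}2)$.)
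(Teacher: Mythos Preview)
Your proposal is correct and follows essentially the same approach as the paper: differentiate the moment formulas of Proposition~\ref{prop:cr-moment-intro} at $\lambda=0$, obtain $\mathbb E[\log\mathsf R_{R\to B}]$ and $\mathbb E[\log\mathsf R_{B\to R}]$ via the same trigonometric evaluations $N(\theta(0))=\sin(8\pi/\kappa)$, $D(\theta(0))=-\sin(4\pi/\kappa)$ and the key simplification $(N'D-ND')|_{\theta(0)}=\sin(4\pi/\kappa)\big(2\sin^2(4\pi/\kappa)-\kappa\sin^2(\kappa\pi/2)\big)$, and substitute into~\eqref{eq:c-kappa}. Your explicit justification of the differentiation at $\lambda=0$ and the sign argument for the square root are welcome additions that the paper leaves implicit.
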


We will first explain in Section~\ref{subsec:proof-1.2} that Theorems~\ref{thm:ratio} and~\ref{thm:value} readily imply Theorem~\ref{thm: main}. Then we prove Theorem~\ref{thm:ratio} in the remainder of this section, following the roadmap in~\cite{ACSW-loop}. The proof of Theorem~\ref{thm:value} will be finished in Section~\ref{sec:computation} using the results from~\cite{LSYZ24}.

\subsection{Proof of Theorem~\ref{thm: main} given Theorems~\ref{thm:ratio} and~\ref{thm:value}}\label{subsec:proof-1.2}

\begin{proof}[Proof of Theorem~\ref{thm: main}]
We aim to relate~\eqref{eq:def-gf} to the CLE cluster Green's function $G_n^{\CLE}(z_1,\ldots,z_n)$ introduced in~\cite{ACSW24}. Let $\eta \subset \mathbb{C}$ be a simple loop. Define $D(\eta)$ to be the finite connected component of $\mathbb{C} \setminus \eta$, and $\CLE_\kappa^{D(\eta)}$ to be the law of $\CLE_\kappa$ in $D(\eta)$. For a $\CLE_\kappa$ process $\Gamma$, let $\mu_\Gamma(dz)$ be its Miller-Schoug measure on its carpet.
Then, according to~\cite[Equation (1.6)]{ACSW24}, $G_n^{\CLE}(z_1,\ldots,z_n)$ satisfies
\begin{equation}\label{eq:n-point-2}
    G_n^{\CLE}(z_1,\ldots,z_n)dz_1 \cdots dz_n=\int \prod_{i=1}^n\mu_\Gamma(dz_i)\CLE_\kappa^{D(\eta)}(d\Gamma) \SLE_\kappa^\lp(d\eta).
\end{equation}

We claim that $G_n^\potts(z_1,\ldots,z_n)=\mathsf{C}(\kappa)^{-2}G_n^{\rm CLE}(z_1,\ldots,z_n)$ for any positive integer $n$, where $\mathsf{C}(\kappa)$ is defined as in Theorem~\ref{thm:ratio}.
Indeed, since $\wt\SLE_\kappa^\lp$ is the counting measure of blue-red interfaces, by~\eqref{eq:def-gf} we have
    \begin{equation}\label{eq:n-point-1}
        G_n^\potts(z_1,\ldots,z_n)dz_1 \cdots dz_n=\int \prod_{i=1}^n\mu_\Gamma(dz_i)\CLE_\kappa^{D(\eta)}(d\Gamma) \wt\SLE_\kappa^\lp(d\eta).
    \end{equation}
For $z \in \mathbb{C}$, define $\mathcal{J}_z$ to be the collection of simple loops on $\mathbb{C}$ that separate $0$ and $\infty$ (hence $\widetilde{\SLE}_\kappa^\sep$ and $\SLE_\kappa^\sep$ are the restrictions of $\widetilde{\SLE}_\kappa^\lp$ and $\SLE_\kappa^\lp$ to $\mathcal{J}_0$, respectively). By Theorem~\ref{thm:ratio} and conformal invariance, we have for any $z \in \mathbb{C}$, $\widetilde{\SLE}_\kappa^\lp\big|_{\mathcal{J}_z}=\mathsf{C}(\kappa)^{-2}\SLE_\kappa^\lp\big|_{\mathcal{J}_z}$. By varying $z$, we find $\wt\SLE_\kappa^\lp=\mathsf{C}(\kappa)^{-2}\SLE_\kappa^\lp$. Therefore, the claim follows from comparing~\eqref{eq:n-point-1} and~\eqref{eq:n-point-2}.
    
    Finally, by~\cite[Theorem 1.4]{ACSW24}, we have
    $$\frac{G_3^{\rm CLE}(z_1,z_2,z_3)}{\sqrt{G_2^{\rm CLE}(z_1,z_2)G_2^{\rm CLE}(z_2,z_3)G_2^{\rm CLE}(z_1,z_3)}}=C_{\beta=\frac{2}{\sqrt{\kappa}}}^{\rm ImDOZZ}\left(\frac{1}{4\beta} - \frac{\beta}{2},\frac{1}{4\beta} - \frac{\beta}{2},\frac{1}{4\beta} - \frac{\beta}{2}\right).$$
    Combined with the above claim as well as the explicit value of $\mathsf{C}(\kappa)$ in Theorem~\ref{thm:value}, we obtain that
    $$\frac{G_3^{\rm Potts}(z_1,z_2,z_3)}{\sqrt{G_2^{\rm Potts}(z_1,z_2)G_2^{\rm Potts}(z_2,z_3)G_2^{\rm Potts}(z_1,z_3)}}=\mathsf{C}(\kappa) \cdot C_{\beta=\frac{2}{\sqrt{\kappa}}}^{\rm ImDOZZ}\left(\frac{1}{4\beta} - \frac{\beta}{2},\frac{1}{4\beta} - \frac{\beta}{2},\frac{1}{4\beta} - \frac{\beta}{2}\right),$$
    as desired.
\end{proof}

\subsection{Markov chain of interfaces}\label{subsec:mc}

In the remainder of this section, we will prove Theorem~\ref{thm:ratio} following the framework of~\cite[Section 6]{ACSW-loop}. For convenience, we instead consider the horizontal cylinder $\cC$ obtained from $\mathbb{R}\times[0,2\pi]$ by identifying $(x,0)$ and $(x,2\pi)$ for $x \in \mathbb R$.
Let $\Loop(\mathcal{C})$ denote the set of simple loops on $\mathcal{C}$ that separates $\pm\infty$. For $\eta \in \Loop(\mathcal{C})$, define $\mathcal{C}_\eta^+$ as the connected component of $\mathcal{C} \setminus \eta$ containing $+\infty$. Furthermore, let $\Loop_0(\mathcal{C})$ be the subset of $\Loop(\mathcal{C})$ consisting of loops $\eta$ with $\max\{\mathrm{Re}(z): z \in \eta\}=0$.

We now define a Markov chain of (shifted) blue-red interfaces on $\Loop_0(\mathcal{C})$ as follows. Given $\eta^0 \in \Loop_0(\mathcal{C})$, sample a CLE$_\kappa$ on $\mathcal{C}_{\eta^0}^+$, and let $\eta^*$ be its outermost loop surrounding $+\infty$\footnote{As noted in Section~\ref{subsec:construct-interface}, $\eta^*$ can be interpreted as an outermost red-blue interface of a continuum fuzzy Potts model with red boundary conditions.}. Then, independently sample a continuum fuzzy Potts model with parameters $(q,r)$ and blue boundary conditions on $\mathcal{C}_{\eta^*}^+$, and denote its outermost blue-red interface surrounding $+\infty$ by $\widetilde{\eta}^1$. Finally, translate $\widetilde{\eta}^1$ to obtain an element $\eta^1\in\Loop_0(\mathcal{C})$. Iterating this procedure defines a Markov chain $(\eta^i)_{i \ge 0}$ on $\Loop_0(\mathcal{C})$.

We first show that $\wt\SLE_\kappa^\sep$ (resp.\ $\SLE_\kappa^\sep$) defined at the beginning of Section~\ref{sec:sle-loop} naturally induces a probability measure on $\Loop_0(\mathcal{C})$. Let $\wt\SLE_\kappa^\sep(\mathcal{C})$ (resp.\ $\SLE_\kappa^\sep(\cC)$) be the pushforward measure of $\wt\SLE_\kappa^\sep$ (resp.\ $\SLE_\kappa^\sep$) under the conformal map $z \mapsto -\log z$ that sends $\hat{\mathbb{C}}$ to $\mathcal{C}$, 
which is an infinite measure on simple loops on $\mathcal{C}$ that separates $\pm\infty$.
\begin{lemma}\label{lem:shape}
    For a loop $\eta$ sampled from $\wt\SLE_\kappa^\sep(\mathcal{C})$ (resp.\ $\SLE_\kappa^\sep(\cC)$), there is a unique decomposition $\eta = \eta^0 + \mathbf{t}$ with $\eta^0 \in \Loop_0(\mathcal{C})$ and $\mathbf{t} \in \mathbb{R}$, and there exists a probability measure $\wt{\mathcal{L}}_\kappa(\cC)$ (resp.\ $\mathcal{L}_\kappa(\cC)$) on $\Loop_0(\mathcal{C})$ and constants $\wt C,C>0$ such that the joint law of $(\eta^0,\bf t)$ equals $\wt C\wt{\mathcal{L}}_\kappa(\cC)\times dt$ (resp.\ $C\mathcal{L}_\kappa(\cC)\times dt$). Moreover, $\wt C=(\mathbb{E}[\log \mathsf{R}_{R\to B}]+\mathbb{E}[\log \mathsf{R}_{B\to R}])^{-1}$ and $C=(\mathbb{E}[\log \mathsf{R}_{R\to B}])^{-1}$.
\end{lemma}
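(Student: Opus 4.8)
The plan is to establish the existence of the decomposition $\eta = \eta^0 + \mathbf{t}$ from the product structure of the source measures under translation, and then compute the total masses $\widetilde{C}$, $C$ by relating them to the per-step contractions of the conformal radius in the Markov chain of interfaces. First I would note that both $\widetilde{\mathrm{SLE}}_\kappa^{\mathrm{sep}}$ and $\mathrm{SLE}_\kappa^{\mathrm{sep}}$ are translation-invariant measures on $\Loop(\mathcal{C})$ (translation invariance on the cylinder corresponds to the scale/rotation invariance of whole-plane CLE and of the whole-plane fuzzy Potts model about $0$ and $\infty$). For any $\eta \in \Loop(\mathcal{C})$ there is a unique translate $\eta^0 = \eta - \mathbf{t}$ with $\max\{\mathrm{Re}(z): z\in\eta^0\} = 0$, so the map $\eta \mapsto (\eta^0,\mathbf{t})$ is a bijection $\Loop(\mathcal{C}) \to \Loop_0(\mathcal{C})\times\mathbb{R}$. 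Translation invariance forces the pushforward of the source measure under this bijection to be of the product form $\nu \times dt$ for some (a priori infinite) measure $\nu$ on $\Loop_0(\mathcal{C})$; the content of the lemma is that $\nu$ is \emph{finite}, so that writing $\nu = \widetilde{C}\,\widetilde{\mathcal{L}}_\kappa(\cC)$ (resp.\ $C\,\mathcal{L}_\kappa(\cC)$) with $\widetilde{\mathcal{L}}_\kappa(\cC)$ a probability measure is legitimate.

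\medskip

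Finiteness of $\nu$ and the identification of its total mass is where the conformal radius computations of Proposition~\ref{prop:cr-moment-intro} enter. I would run the following counting argument, which is the cylinder version of the standard ``number of loops crossing an annulus'' estimate. Fix a reference vertical line, say $\{\mathrm{Re}(z) = 0\}$ on $\mathcal{C}$; on $\hat{\mathbb{C}}$ this is a circle of some fixed radius around $0$. A loop of $\widetilde{\mathrm{SLE}}_\kappa^{\mathrm{sep}}(\mathcal{C})$ has $\eta^0$-part landing in $\Loop_0(\mathcal{C})$ precisely when its rightmost point lies on this line. Using the Markov chain description from Section~\ref{subsec:mc}: the successive blue-red interfaces $\mathcal{L}^0 \supset \mathcal{L}^1 \supset \cdots$ separating $0$ from $\infty$ in the whole-plane fuzzy Potts model have the property that $\log \CR(0, D(\mathcal{L}^k))$ performs an additive random walk (in $k$) with i.i.d.\ increments equal in law to $\log \mathsf{R}_{R\to B} + \log \mathsf{R}_{B\to R}$ — a passage from one blue-red interface to the next means traversing one red-to-blue step (a $\CLE_\kappa$ loop, contributing $\log\mathsf{R}_{R\to B}$) and one blue-to-red step (contributing $\log\mathsf{R}_{B\to R}$), by the construction recalled in Section~\ref{subsec:construct-interface}. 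Since $\widetilde{\mathrm{SLE}}_\kappa^{\mathrm{sep}}$ is the counting measure on all such interfaces, the total mass it assigns to loops whose rightmost point lies in a unit-width vertical strip $\{a \le \mathrm{Re}(z) < a+1\}$ is, by the renewal theorem / ergodic averaging applied to this random walk, equal to $1/\mathbb{E}[\log\mathsf{R}_{R\to B} + \log\mathsf{R}_{B\to R}]$ — this is exactly the asymptotic density of renewal points. (The analogous computation for $\mathrm{SLE}_\kappa^{\mathrm{sep}}(\cC)$, where the relevant loops are \emph{all} $\CLE_\kappa$ loops separating $0$ and $\infty$ and the per-step increment is just $\log\mathsf{R}_{R\to B}$ — the conformal-radius contraction of a single $\CLE_\kappa$ loop around the origin — gives density $1/\mathbb{E}[\log\mathsf{R}_{R\to B}]$.) By translation invariance this unit-strip mass is exactly the total mass $\widetilde{C}$ (resp.\ $C$) of $\nu$, giving $\widetilde{C} = (\mathbb{E}[\log\mathsf{R}_{R\to B}] + \mathbb{E}[\log\mathsf{R}_{B\to R}])^{-1}$ and $C = (\mathbb{E}[\log\mathsf{R}_{R\to B}])^{-1}$. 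Finiteness of these expectations is guaranteed by Proposition~\ref{prop:cr-moment-intro} (the $\lambda\to 0$ behavior of the moment formulas, which are finite in a neighborhood of $0$, shows $\mathbb{E}[\log\mathsf{R}_{R\to B}]$, $\mathbb{E}[\log\mathsf{R}_{B\to R}] \in (0,\infty)$).

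\medskip

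\textbf{The main obstacle} I expect is making the ``renewal density'' identification fully rigorous: one must justify that the counting measure on separating interfaces, pushed to the cylinder and decomposed, really does have the product form with the claimed finite total mass, rather than merely showing the per-strip counts grow linearly. The clean way is to use the stationarity/shift-invariance of the whole-plane fuzzy Potts model together with a mass-transport or Palm-calculus argument: the intensity of the point process $\{\log\CR(0,D(\mathcal{L}^k))\}_k \subset \mathbb{R}$ is the reciprocal of the mean spacing, and this intensity is precisely what $\widetilde{\mathrm{SLE}}_\kappa^{\mathrm{sep}}(\cC)$ assigns per unit length along the cylinder axis after the decomposition — but care is needed because the ``rightmost point'' coordinate $\mathbf{t}$ is not the same as $\log\CR$, so one needs that the Radon–Nikodym density between $d\mathbf{t}$ and $d(\log\CR)$ along a typical loop trajectory averages out correctly, which again follows from translation invariance on $\mathcal{C}$. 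A secondary technical point is checking that $\widetilde{\mathcal{L}}_\kappa(\cC)$ is genuinely supported on $\Loop_0(\mathcal{C})$ (i.e.\ the rightmost point is a.s.\ a single point with $\mathrm{Re}=0$), which is immediate from the simplicity and regularity of $\mathrm{SLE}_\kappa$-type loops for $\kappa < 4$. The existence statement and the unconditional product structure are routine once translation invariance is in hand; it is the extraction of the two explicit constants $\widetilde{C}$ and $C$ in terms of $\mathbb{E}[\log\mathsf{R}_{R\to B}]$ and $\mathbb{E}[\log\mathsf{R}_{B\to R}]$ — i.e.\ correctly setting up and invoking the renewal theorem for the log-conformal-radius walk — that carries the real weight of the lemma.
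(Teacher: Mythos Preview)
Your proposal is correct and follows the same approach as the paper: the product structure comes from translation invariance on $\mathcal{C}$, and the constants are obtained by a renewal-type density computation for the random walk of $-\log\CR$ with i.i.d.\ increments $-\log\mathsf{R}_{R\to B}-\log\mathsf{R}_{B\to R}$ (resp.\ $-\log\mathsf{R}_{R\to B}$). The paper's proof simply cites \cite[Proposition~9.1]{ACSW-loop} for the $\CLE$ case and asserts the identical argument works for $\wt\SLE_\kappa^\sep$; you have in fact spelled out what that argument is, including the correct observation that the discrepancy between the coordinate $\mathbf t$ and $-\log\CR$ is handled by translation invariance (both are stationary point processes in bijection with the same set of loops, hence have the same intensity).
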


\begin{proof}
    The first claim follows directly from the translation invariance of $\wt\SLE_\kappa^\sep(\mathcal{C})$ (resp.\ $\SLE_\kappa^\sep(\cC)$). The constant $C=(\mathbb{E}[\log \mathsf{R}_{R\to B}])^{-1}$ is derived in~\cite[Proposition 9.1]{ACSW-loop}, and it is straightforward to see that the same argument applies to $\wt\SLE_\kappa^\sep(\mathcal{C})$, yielding the expression for $\widetilde C$.
\end{proof}

The following proposition shows that $\wt{\mathcal{L}}_\kappa(\cC)$ is the unique stationary measure of the Markov chain defined above.

\begin{proposition}\label{prop:markov}
    Let $(\eta^i)_{i \ge 0}$ be the Markov chain starting from $\eta^0$. Then $\eta^n$ converges in the total variation distance to $\widetilde{\mathcal{L}}_\kappa(\mathcal{C})$. Moreover, $\widetilde{\mathcal{L}}_\kappa(\mathcal{C})$ is the unique stationary measure of the Markov chain.
\end{proposition}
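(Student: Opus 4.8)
\textbf{Proof plan for Proposition~\ref{prop:markov}.}

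The plan is to run the classical ``coupling from the past'' / monotone-coupling argument that underlies the stationarity proofs in \cite[Section 6]{ACSW-loop}, adapted to the fuzzy Potts Markov chain. The first and most important structural observation is that one step of the chain can be realized so that, conditioned on the whole-plane (cylindrical) picture, the interface $\eta^1$ produced from $\eta^0$ is a \emph{deterministic monotone function of $\eta^0$} together with an independent auxiliary randomness: if $\eta^0 \preccurlyeq \tilde\eta^0$ (meaning $\mathcal{C}_{\eta^0}^+ \supseteq \mathcal{C}_{\tilde\eta^0}^+$ after recentering), then one can couple the two evolutions so that $\eta^1 \preccurlyeq \tilde\eta^1$. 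Concretely, sample a single whole-plane continuum fuzzy Potts configuration $\omega$ on $\mathcal{C}$; for a given $\eta^0 \in \Loop_0(\mathcal{C})$, restrict attention to the interfaces of $\omega$ that lie in $\mathcal{C}_{\eta^0}^+$, take the outermost red-blue interface surrounding $+\infty$ (this is the $\eta^*$ of the chain, whose conditional law given its outer boundary is $\CLE_\kappa$ by the MSW description recalled in Section~\ref{subsec:construct-interface}), then within $\mathcal{C}_{\eta^*}^+$ take the outermost blue-red interface, and finally recenter. Because CLE$_\kappa$ and the continuum fuzzy Potts model both satisfy the domain Markov / restriction compatibility used in \cite{ACSW-loop}, this produces the correct transition kernel; and because ``outermost loop surrounding $+\infty$ inside a domain'' is monotone in the domain, the coupling is monotone in $\eta^0$.

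\smallskip

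Given the monotone coupling, I would carry out the argument in the following steps. \emph{Step 1 (stationarity of $\wt{\mathcal L}_\kappa(\cC)$).} Start the chain from $\eta^0$ distributed according to $\wt{\mathcal L}_\kappa(\cC)$. By Lemma~\ref{lem:shape}, $\wt{\mathcal L}_\kappa(\cC)\times dt$ (up to the constant $\wt C$) is the law of a recentered sample from $\wt\SLE_\kappa^\sep(\cC)$, i.e.\ of a blue-red interface separating $\pm\infty$ picked from the counting measure on such interfaces in $\omega$. Now observe that the transition step --- pass to the $\CLE_\kappa$ loop immediately outside, then to the next blue-red interface outside that --- maps, inside the same $\omega$, a blue-red separating interface to the next blue-red separating interface further toward $+\infty$. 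Hence applying the transition to $\wt\SLE_\kappa^\sep(\cC)$ simply shifts the index in the nested sequence of blue-red interfaces; the counting measure on blue-red separating interfaces is invariant under this reindexing (this is exactly where one uses that $\wt\SLE_\kappa^\sep(\cC)$ is, by construction, translation invariant and supported on a stationary nested family, cf.\ the proof of \cite[Proposition 6.x]{ACSW-loop} for $\SLE_\kappa^\sep$). Taking the $\Loop_0(\cC)$-marginal shows $\wt{\mathcal L}_\kappa(\cC)$ is stationary. The conformal-welding input flagged in Section~\ref{sec:proof} enters here, to justify that the two-layer transition (CLE$_\kappa$ layer, then fuzzy-Potts layer) indeed realizes the passage between consecutive blue-red interfaces of $\omega$ in a measure-preserving way.

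\smallskip

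\emph{Step 2 (convergence and uniqueness).} For an arbitrary deterministic $\eta^0 \in \Loop_0(\cC)$, use the monotone coupling to sandwich the chain started from $\eta^0$ between two chains whose starting loops are, respectively, ``inside'' and ``outside'' a stationary sample; more efficiently, run the chain from $\eta^0$ and, on the same $\omega$, run a stationary copy, and let $\eta^*_{\mathrm{st}}$ denote the outermost red-blue interface of $\omega$ surrounding $+\infty$ that lies inside \emph{both} current loops. After one step both chains have entered $\mathcal{C}^+_{\eta^*_{\mathrm{st}}}$ (or deeper), and since inside a fixed domain the two-layer exploration is run on the \emph{same} $\omega$, the two chains coincide from that step on: this is the coalescence time. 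The coalescence time is a.s.\ finite because the number of nested separating interfaces of $\omega$ between any two loops of $\Loop_0(\cC)$ is a.s.\ finite (each CLE$_\kappa$/BCLE$_\kappa$ layer has a strictly positive probability of ``capturing'' $+\infty$, exactly as in the a.s.-termination statement in Section~\ref{subsec:construct-interface}). Total-variation convergence to $\wt{\mathcal L}_\kappa(\cC)$ follows from the standard coupling inequality, and uniqueness of the stationary measure is immediate from convergence from every starting point.

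\smallskip

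\emph{Main obstacle.} The genuinely nontrivial point is Step~1: verifying that the two-layer transition kernel of the Markov chain, when applied to the measure $\wt{\mathcal L}_\kappa(\cC)$ arising from $\wt\SLE_\kappa^\sep(\cC)$, really does reproduce $\wt{\mathcal L}_\kappa(\cC)$ --- equivalently, that the ``resampling the CLE$_\kappa$ carpet inside a red cluster and then the blue clusters inside it'' operation is consistent with the nested structure of the continuum fuzzy Potts model as reconstructed by Theorem~\ref{thm:CLE-percolation}. This requires the conformal-welding / domain-Markov compatibility statements, and is the analog of the most delicate part of \cite[Section 6]{ACSW-loop}; the monotone-coupling half (Step~2) is comparatively routine once the correct coupling on a common $\omega$ is set up.
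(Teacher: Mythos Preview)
Your Step~2 has a genuine gap: the monotone coupling you describe does not realize the transition kernel of the chain. For an arbitrary $\eta^0\in\Loop_0(\cC)$ which is \emph{not} an interface of $\omega$, restricting the whole-plane configuration $\omega$ to $\cC_{\eta^0}^+$ does not give a continuum fuzzy Potts model with red boundary conditions on that domain, so the ``outermost red--blue interface surrounding $+\infty$'' of $\omega|_{\cC_{\eta^0}^+}$ is not distributed as the $\CLE_\kappa$ loop $\eta^*$ required by the chain, and the subsequent blue--red interface is not $\wt\eta^1$. There is no domain-restriction property of the fuzzy Potts model (or even of $\CLE_\kappa$) that says ``restrict to an arbitrary subdomain and get the same model with fixed boundary conditions''; the claim ``this produces the correct transition kernel'' in your first paragraph is exactly where the argument breaks. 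Consequently the sandwiching/coalescence in Step~2 is not justified.

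You have also misidentified where the hard work lies. Your Step~1---stationarity of $\wt{\mathcal L}_\kappa(\cC)$---is essentially immediate from the MSW domain Markov description in Section~\ref{subsec:construct-interface} (the transition kernel applied to a blue--red interface of $\omega$ really does produce the next one), and requires no conformal welding; the welding input flagged in Section~\ref{sec:proof} is used for Proposition~\ref{prop:stationary-bcle-simple}, i.e.\ stationarity of the \emph{other} measure $\mathcal L_\kappa(\cC)$. The paper's proof of the present proposition proceeds quite differently from your plan: it couples at the underlying $\CLE_{\kappa'}$ level rather than at the spin level. Concretely, one realizes the chain started from $\partial D$ as the nested blue--red interfaces of a fuzzy Potts model $\omega_D$ on $D$ (obtained by coloring a $\CLE_{\kappa'}$ $\Gamma_D$), couples $\Gamma_D$ with the whole-plane $\CLE_{\kappa'}$ $\Gamma$ via \cite[Lemma~6.14]{ACSW-loop} so that their outermost loops around $0$ coincide with positive probability, and then uses the same colors inside; on that event the two sequences of blue--red interfaces agree after finitely many steps. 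Total-variation convergence and uniqueness then follow.
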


\begin{proof}
Suppose that $D \subset \mathbb{C}$ is a bounded simply connected domain containing the origin, and $\omega_D$ is sampled from a continuum fuzzy Potts model on $D$ with parameters $(q,r)$ and red boundary conditions. Let $\{\eta_n^D\}_{n \ge 0}$ be the collection of blue-red interfaces in $\omega_D$ surrounding the origin, ordered so that $\eta_{n+1}^D$ is surrounded by $\eta_n^D$, with the convention that $\eta_0^D:=\partial D$. Recall that $\omega$ is sampled from the whole-plane continuum fuzzy Potts model.
In what follows, we construct a coupling between $\omega_D$ and $\omega$ such that, with positive probability, the sequence $\{\eta_n^D\}$ coincides with the blue-red interfaces in $\omega$ after finitely many steps. The desired result is a direct consequence of this coupling.

We achieve this by the continuum Edwards-Sokal coupling introduced in Theorem~\ref{thm:CLE-percolation}.
Suppose $\Gamma_D$ (resp.\ $\Gamma$) is a $\CLE_{\kappa'}$ process on $D$ (resp.\ on the whole-plane). Let $\eta_0'^D$ be the outermost loop of $\Gamma_D$ surrounding the origin, and let $\eta_0'$ be the outermost loop of $\Gamma$ contained in $D$ and surrounds the origin. According to~\cite[Lemma 6.14]{ACSW-loop}, there is a coupling of $\Gamma_D$ and $\Gamma$ such that $\eta_0'^D$ and $\eta_0'$ coincide and do not touch $\partial D$ with a positive probability. For a non-crossing loop $\eta \subset \mathbb{C}$ surrounding the origin, denote by $D(\eta)$ the connected component of $\mathbb{C} \setminus \eta$ containing the origin. Then, by the Markov property of $\CLE_{\kappa'}$, this gives a coupling of $(\Gamma_D,\Gamma)$ such that with a positive probability, $(\eta_0'^D,D(\eta_0'^D),\Gamma_D|_{D(\eta_0'^D)})$ and $(\eta_0',D(\eta_0'),\Gamma|_{D(\eta_0')})$ coincide.

Suppose we are now under the event that the above coupling of $(\Gamma_D,\Gamma$) succeeds, and we color $(\Gamma_D,\Gamma)$ in order to obtain a coupling of $(\omega_D,\omega)$. Namely, we color each loop in $\Gamma_D$ that touches $\partial D$ red. Each remaining loop of $\Gamma_D$ is independently colored red (resp.\ blue) with probability $r$ (resp.\ $1-r$). Next, we assign to each loop in $\{\eta_0'\}\cup\Gamma|_{ D(\eta_0'^D)}$ the same color as its counterpart in $\{\eta_0'^D\}\cup\Gamma_D|_{D(\eta_0'^D)}$ (recall that they are coupled to be the same). The remaining loops in $\Gamma$ are independently colored red or blue with probability $r$ or $1-r$. Note that the marginal law of colors of loops in $\Gamma$ is still independently red or blue with probability $r$ or $1-r$, respectively, and hence this gives a coupling between $\omega_D$ and $\omega$. Now, on the event that the coupling of $(\Gamma_D,\Gamma)$ succeeds, consider exploring all blue-red interfaces of $\omega_D$ from $\partial D$ to the origin. These interfaces almost surely enter the domain $D(\eta_0'^D)$ after a finite number of steps, and they will coincide with the corresponding blue-red interfaces of $\omega$ thereafter, thus providing the desired coupling.
\end{proof}

According to Proposition~\ref{prop:markov}, to show Theorem~\ref{thm:ratio}, it remains to show that $\mathcal{L}_\kappa(\mathcal{C})$ is also a stationary measure of the Markov chain. This can be done by coupling the SLE loop measure with the Liouville quantum gravity (LQG) surfaces. In the following Section~\ref{sec:LQG}, we will first recall some LQG backgrounds, and then finish the proof of Theorem~\ref{thm:ratio} in Section~\ref{subsec:shape}.

\subsection{Liouville quantum gravity surfaces}\label{sec:LQG}

This subsection reviews some basic geometric concepts in LQG and conformal welding. Seasoned readers may skim or proceed directly to Proposition~\ref{prop:weld-sle-loop}.

We begin by reviewing the Gaussian free field (GFF) on the horizontal strip $\mathcal{S}=\mathbb{R} \times (0,\pi)$. Let $m$ be the uniform measure on $\{0\} \times (0,\pi)$. Define the Dirichlet inner product $\langle f,g\rangle_\nabla = (2\pi)^{-1} \int_\mathcal{S} \nabla f\cdot\nabla g $ on the space $\{f\in C^\infty(\mathcal{S}):\int_\mathcal{S}|\nabla f|^2<\infty \mbox{ and } \int_\mathcal{S} f(z)m(\dd z)=0\}$, and let $H(\mathcal{S})$ be its Hilbert space closure under $\langle \cdot,\cdot\rangle_\nabla$.
Let $(f_n)_{n=1}^\infty$ be an orthonormal basis of $H(\mathcal{S})$, and let $(\alpha_n)_{n=1}^\infty$ be a sequence of i.i.d.\ standard Gaussian random variables. Then the summation $h_\mathcal{S}=\sum_{n=1}^\infty \alpha_n f_n$ converges almost surely in the space of distributions. We call $h_\mathcal{S}$ a \emph{Gaussian free field} on $\mathcal{S}$ with normalization $\int_{\mathcal{S}} h_\mathcal{S}(z) m(\dd z)=0$; see~\cite[Section 4.1.4]{DMS-mating-of-tree} for details.

Fix an LQG parameter $\gamma \in (0,2)$. A $\gamma$-Liouville quantum gravity surface is defined as follows. Consider pairs $(D,h)$ where $D \subseteq \mathbb{C}$ is a domain and $h$ is a distribution on $D$. Define an equivalence relation $\sim_\gamma$ by $(D, h) \sim_\gamma (\widetilde{D}, \widetilde{h})$ if and only if there exists a conformal map $g: D \to \widetilde{D}$ such that
\begin{equation}\label{eq:equiv-rel}
    \wt h=g \bullet_\gamma h := h \circ g^{-1}+Q \log |(g^{-1})'|, \quad \mbox{where} \quad Q=\frac{\gamma}{2}+\frac{2}{\gamma}.
\end{equation}
A quantum surface is an equivalence class of pairs $(D,h)$ under $\sim_\gamma$, and an embedding is a choice of $(D,h)$ from the equivalence class. More generally, for $m,n \in \mathbb{N}$, a curve-decorated quantum surface with marked points is an equivalence class of tuples $(D,h,x_1,\ldots,x_m,\eta_1,\ldots,\eta_n)$, where $x_i \in \overline{D}$ and $\eta_j$ are curves in $\overline{D}$. We say $(D,h,x_1,\ldots,x_m,\eta_1,\ldots,\eta_n) \sim_\gamma (\widetilde D,\widetilde h,\widetilde x_1,\ldots,\widetilde x_m,\widetilde \eta_1',\ldots,\widetilde \eta_n)$ if there exists a conformal map $g:D \to \widetilde{D}$ satisfying~\eqref{eq:equiv-rel} such that $g(x_i) = \widetilde{x}_i$ and $g(\eta_j) = \widetilde{\eta}_j$ for all $i, j$.

For a $\gamma$-quantum surface $(D,h)/{\sim_{\gamma}}$ embedded as $(\mathcal{S},\phi)$, where $\phi$ is the sum of $h_{\mathcal{S}}$ and a (possibly random) function on $\overline{\mathcal{S}}$ continuous except at finitely many points, the \emph{quantum area measure} $\mu_\phi$ is defined as the weak limit of $\mu_\phi^\epsilon:=\epsilon^{\gamma^2/2} e^{\gamma \phi_\epsilon(z)} \dd^2 z$ as $\epsilon \to 0$, where $\dd^2 z$ is the Lebesgue measure on $\mathcal{S}$ and $\phi_\epsilon(z)$ is the average of $\phi$ over the circle $\partial \mathcal{B}(x,\epsilon) \cap \mathcal{S}$~\cite{DS11,SW16}. Similarly, the \emph{quantum boundary length measure} $\nu_\phi$ is given by the weak limit of $\nu_\phi^\epsilon:=\epsilon^{\gamma^2/4} e^{\frac{\gamma}{2} \phi_\epsilon(x)} \dd x$ as $\epsilon \to 0$, where for $x \in \partial \mathcal{S}$, $\phi_\epsilon(x)$ is the average of $\phi$ over the semi-circle $\partial \mathcal{B}(x,\epsilon) \cap \mathcal{S}$.

We now recall the radial-lateral decomposition of $h_{\mathcal{S}}$. The space $H(\mathcal{S})$ admits an orthogonal decomposition $H(\mathcal{S})=H_1(\mathcal{S}) \oplus H_2(\mathcal{S})$, where $H_1(\mathcal{S})$ (resp.\ $H_2(\mathcal{S})$) consists of functions in $H(\mathcal{S})$ which are constant (resp.\ have mean zero) on each vertical line $\{t\} \times (0,\pi)$ for each $t \in \mathbb{R}$. This yields a decomposition $h_{\mathcal{S}}=h_{\mathcal{S}}^1+h_{\mathcal{S}}^2$, where $h_{\mathcal{S}}^1$ and $h_{\mathcal{S}}^2$ are the projections of $h_\mathcal{S}$ onto $H_1(\mathcal{S})$ and $H_2(\mathcal{S})$, respectively, and are independent.
Moreover, the process $\{ h_{\mathcal{S}}^1(t) \}_{t \in \mathbb{R}}$ is distributed as $\{B_{2t}\}_{t \in \mathbb{R}}$, where $(B_t)_{t \in \mathbb{R}}$ is a standard two-sided Brownian motion with $B_0=0$~\cite[Section 4.1.6]{DMS-mating-of-tree}.

We now review the quantum disks and quantum spheres as defined in~\cite[Section 4.5]{DMS-mating-of-tree}, following the presentation in~\cite{AHS23,AHS21}.

\begin{definition}[Thick quantum disk]
    For $W \ge \frac{\gamma^2}{2}$, we define the measure $\mathcal{M}_2^{\rm disk}(W)$ as follows. Write $\beta=\gamma+\frac{2-W}{\gamma}$ and let $(B_t)_{t \ge 0}$ be a standard Brownian motion conditioned on $B_{2t}-(Q-\beta)t<0$ for all $t>0$, and $(\widetilde B_t)_{t \ge 0}$ be its independent copy. Let
    \begin{equation*}
        Y_t=
        \begin{cases}
            B_{2t}-(Q-\beta) t, & \mbox{ for }\,t \ge 0, \\
            \widetilde B_{-2t}+(Q-\beta)t, & \mbox{ for }\,t<0,
        \end{cases}
    \end{equation*}
    and set $h_1(z)=Y_{\mathrm{Re} z}$ for each $z \in \mathcal{S}$. Let $h_2$ be a random generalized function with the same law as $h_\mathcal{S}^2$ defined above and independent from $h_1$.
    Sample $\mathbf{c} \in \mathbb{R}$ independently from the measure $\frac{\gamma}{2} e^{(\beta-Q)c} \dd c$, and set $\phi(z)=h_1(z)+h_2(z)+\mathbf{c}$. The infinite measure $\mathcal{M}_2^{\rm disk}(W)$ is defined as the law of $(\mathcal{S},\phi,-\infty,+\infty)/{\sim_\gamma}$.
\end{definition}

For $W=2$, the two marked points are quantum typical: the law of $\mathcal{M}_2^{\rm disk}(2)$ is invariant under independently resampling both points from the quantum length measure~\cite[Proposition A.8]{DMS-mating-of-tree}. This enables the definition of general quantum disks with quantum typical marked points. In this paper, we focus on the case of $\QD_{1,0}$ with a single interior marked point.

\begin{definition}
    Sample $(\mathcal{S},\phi,-\infty,+\infty)/{\sim_\gamma}$ from the reweighted measure $\nu_\phi(\partial \mathcal{S})^{-2} \mu_\phi(\partial \mathcal{S}) \mathcal{M}_2^{\rm disk}(2)$ and independently sample $z$ from the probability measure proportional to $\mu_\phi$. We call $(\mathcal{S},\phi,z)/{\sim_\gamma}$ a quantum disk with a single interior point and denote its law by $\QD_{1,0}$.
\end{definition}

We next introduce the quantum sphere, for which it is convenient to work on the horizontal cylinder $\mathcal{C}=\mathbb{R} \times [0,2\pi]/{\sim}$, where $(x,0) \sim (x,2\pi)$ for $x \in \mathbb{R}$ (as in Section~\ref{subsec:mc}). Let $m$ be the uniform measure on $(\{0\} \times [0,2\pi]/{\sim}$. The Gaussian free field $h_\mathcal{C}$ on $\mathcal{C}$ with normalization $\int_\mathcal{C} h_\mathcal{C}(z) m(\dd z)=0$ is constructed similar to $h_\mathcal{S}$: we define the Dirichlet inner product $\langle f, g \rangle_\nabla=(2\pi)^{-1} \int_\mathcal{C} \nabla f \cdot \nabla g$ on the space of smooth compactly-supported functions on $\mathcal{C}$ satisfying $\int_\mathcal{C} f(z)m(\dd z)=0$, and let $H(\mathcal{C})$ be its Hilbert space closure under this inner product. Then, $h_\mathcal{C}:=\sum_{n=1}^\infty \alpha_n f_n$, where $(f_n)_{n=1}^\infty$ is an orthonormal basis of $H(\mathcal{C})$ and $(\alpha_n)_{n=1}^\infty$ are i.i.d.\ standard Gaussian variables.

The space $H(\mathcal{C})$ also has an orthogonal decomposition $H(\mathcal{C})=H_1(\mathcal{C}) \oplus H_2(\mathcal{C})$, where $H_1(\mathcal{C})$ (resp.\ $H_2(\mathcal{C})$) is the subspace of functions in $H(\mathcal{C})$ which are constant (resp.\ have mean zero) on $(\{t\} \times [0,2\pi])/{\sim}$ for each $t \in \mathbb{R}$. This gives $h_{\mathcal{C}}=h_{\mathcal{C}}^1+h_{\mathcal{C}}^2$, where $h_{\mathcal{C}}^1$ and $h_{\mathcal{C}}^2$ are independent projections of $h_\mathcal{C}$ onto $H_1(\mathcal{C})$ and $H_2(\mathcal{C})$.

\begin{definition}[Quantum sphere]
    For $W>0$, we define the measure $\mathcal{M}_2^{\rm sph}(W)$ as follows. Write $\alpha=Q-\frac{W}{2\gamma}$ and let $(B_t)_{t \ge 0}$ be a standard Brownian motion conditioned on $B_t-(Q-\alpha)t<0$ for all $t>0$, and $(\widetilde B_t)_{t \ge 0}$ be its independent copy. Let
    \begin{equation*}
        Y_t=
        \begin{cases}
            B_t-(Q-\alpha) t, & \mbox{ for }\,t \ge 0, \\
            \widetilde B_{-t}+(Q-\alpha)t, & \mbox{ for }\,t<0,
        \end{cases}
    \end{equation*}
    and set $h_1(z)=Y_{\mathrm{Re} z}$ for $z \in \mathcal{C}$. Let $h_2$ be a random generalized function with the same law as $h_\mathcal{C}^2$ defined above and independent from $h_1$.
    Sample $\mathbf{c} \in \mathbb{R}$ independently from the measure $\frac{\gamma}{2} e^{(\beta-Q)c} \dd c$, and set $\phi(z)=h_1(z)+h_2(z)+\mathbf{c}$. The infinite measure $\mathcal{M}_2^{\rm sph}(W)$ is defined as the law of $(\mathcal{C},\phi,-\infty,+\infty)/{\sim_\gamma}$.
\end{definition}

For $W=4-\gamma^2$, the two marked points are quantum typical---they are independent samples from the quantum area measure~\cite[Proposition A.13]{DMS-mating-of-tree}. We therefore define $\QS_2:=\mathcal{M}_2^{\rm sph}(4-\gamma^2)$.

For a measure $\mathcal{M}$ on quantum surfaces, we can disintegrate it over the quantum lengths of its boundary arcs. For instance, we can define a disintegration $\{\QD_{1,0}(\ell)\}_{\ell>0}$ of $\QD_{1,0}$, where each $\QD_{1,0}(\ell)$ is supported on quantum disks with one interior marked point and boundary length $\ell$, satisfying $\QD_{1,0}=\int_0^\infty \QD_{1,0}(\ell) \dd \ell$.

For a measure $\mathcal{M}$ on quantum surfaces (possibly with marked points) and a conformally invariant measure $\mathcal{P}$ on curves (possibly multiple), we let $\mathcal{M} \otimes \mathcal{P}$ denote the law of the curve-decorated quantum surface obtained by sampling $(S,\eta)$ from $\mathcal{M} \times \mathcal{P}$ and drawing $\eta$ on $S$.

Conformal welding is a class of results stating that $\mathcal{M} \otimes \mathcal{P}$ arises from welding two or more \emph{independent} quantum surfaces along their boundary arcs or loops.
Specifically, let $\mathcal{M}^1$ and $\mathcal{M}^2$ be measures on quantum surfaces with boundary marked points. For $i=1,2$, fix a boundary arc $e_i$ of a sample from $\mathcal{M}^i$, and let $\{\mathcal{M}^i(\ell)\}_{\ell>0}$ be the disintegration of $\mathcal{M}^i$ over the quantum lengths of $e_i$, so that $\mathcal{M}^i=\int_0^\infty \mathcal{M}^i(\ell) \dd \ell$. The conformal welding of $\mathcal{M}^1$ and $\mathcal{M}^2$ along the boundary arcs $e_1$ and $e_2$ is defined as
\[ \int_0^\infty \mathcal{M}^1(\ell) \times \mathcal{M}^2(\ell) \dd \ell, \]
where for each $\ell>0$, $\mathcal{M}^1(\ell) \times \mathcal{M}^2(\ell)$ is the law of a quantum surface obtained by sampling a pair of independent quantum surfaces from $\mathcal{M}^1(\ell) \otimes \mathcal{M}^2(\ell)$ and then conformally welding them together according to quantum length, yielding a single quantum surface decorated with a curve (the welding interface).

For quantum surfaces $\mathcal{M}^1$ and $\mathcal{M}^2$ without boundary marked points (so that $e_1$ and $e_2$ are boundary loops), the conformal welding involves extra randomness. In this case, we sample points $p_i$ on $e_i$ from the probability measure proportional to the quantum length measure for $i \in \{1,2\}$, and then conformally weld $\mathcal{M}^1$ and $\mathcal{M}^2$ along $e_1$ and $e_2$ by identifying $p_1$ and $p_2$. Denote the law of the resulting quantum surface by $\mathcal{M}^1(\ell) \times \mathcal{M}^2(\ell)$. The uniform conformal welding of $\mathcal{M}^1$ and $\mathcal{M}^2$ along boundary loops $e_1$ and $e_2$ is defined as
\[ \int_0^\infty \mathcal{M}^1(\ell) \times \ell\,\mathcal{M}^2(\ell) \dd \ell, \]
where the factor $\ell$ accounts for the additional welding freedom.

The following proposition, taken from~\cite[Propositions 6.5]{ACSW24}, provides the key tool for characterizing $\SLE_\kappa$ loop measures through the conformal welding of $\sqrt{\kappa}$-LQG surfaces.

\begin{proposition}[\cite{ACSW24}]\label{prop:weld-sle-loop}
    For $\kappa \in (8/3,4)$ and $\gamma=\sqrt{\kappa}$, there exists a constant $C=C(\kappa)$ such that
    \begin{equation}\label{eq:weld-2-qd}
        \mathrm{QS}_2 \otimes \SLE_\kappa^{\rm sep} = C \int_0^\infty \QD_{1,0}(\ell) \times \ell\,\QD_{1,0}(\ell) \dd \ell,
    \end{equation}
    where the right-hand side represents the uniform conformal welding along the boundaries of two independent samples from $\QD_{1,0}$ conditioned to have the same quantum boundary lengths.
\end{proposition}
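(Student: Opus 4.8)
The plan is to obtain \eqref{eq:weld-2-qd} from the conformal welding theory for thick quantum disks, upgrading the boundary-\emph{arc} welding of \cite{DMS-mating-of-tree,AHS21,AHS23} to a full boundary-\emph{loop} welding that produces a quantum sphere. First I would fix the candidate decomposition. Given $(\cC,\phi,-\infty,+\infty)/{\sim_\gamma}$ sampled from $\QS_2$ together with a loop $\eta$ sampled from $\SLE_\kappa^\sep$ (so that $\eta$ separates $-\infty$ from $+\infty$), cut the surface along $\eta$. This produces two curve-free quantum surfaces, the component $S^-$ containing $-\infty$ and the component $S^+$ containing $+\infty$; each inherits a single interior marked point. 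Because a conformal welding along an $\SLE_\kappa$ interface identifies the quantum length measures on the two sides, $S^-$ and $S^+$ share a common boundary length $\ell$. The goal is to show that, disintegrating over $\ell$, the pair $(S^-,S^+)$ is distributed as $C\,\QD_{1,0}(\ell)\times\ell\,\QD_{1,0}(\ell)$ and is sent back to $(\cC,\phi,\eta)$ by the uniform loop-welding recipe of Section~\ref{sec:LQG}.

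Second, I would invoke the disk welding identity: welding a weight-$W_1$ and a weight-$W_2$ thick quantum disk along a boundary arc produces a weight-$(W_1+W_2)$ thick quantum disk decorated with an $\SLE_\kappa(W_1-2;W_2-2)$ interface. Applying this with $W_1=W_2=2$, but welding along the entire boundary loop rather than along an arc, I would match the explicit Brownian descriptions in the definitions of $\cM_2^{\mathrm{disk}}$ and $\cM_2^{\mathrm{sph}}$ to show that the welded surface is a quantum sphere, and check that the resulting weight equals $4-\gamma^2$, so that it is exactly $\QS_2$ and its two interior marked points (each sampled from the area measure, as in the definition of $\QD_{1,0}$) are area-typical. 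The extra factor $\ell$ in $\QD_{1,0}(\ell)\times\ell\,\QD_{1,0}(\ell)$ is the combinatorial weight attached to loop gluing: since $\QD_{1,0}$ carries no boundary marked point, welding two boundary loops of common length $\ell$ requires a choice of base point on each, contributing a factor $\ell^2$, one power of which is absorbed by the residual conformal symmetry of the welded surface, leaving a single $\ell$.

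Third --- and this is the crux --- I would identify the welding interface with $\SLE_\kappa^\sep$ up to a multiplicative constant. The disk-welding theorem only guarantees that the interface is an $\SLE_\kappa$-type loop on the sphere separating the two marked points; to match it with Zhan's $\SLE_\kappa$ loop measure \cite{zhan-loop-measures} restricted to such loops one needs an extra input. I would use the characterization of the whole-plane (equivalently sphere) $\SLE_\kappa$ loop measure as a conformal-welding interface, as developed in \cite{ACSW-loop} and consistent with \cite{KW16}: the loop measure is pinned down by the way it transforms under exploration together with its conformal restriction covariance, and both features are transparent on the welding side. Once the interface is identified, comparing the total masses of the two sides of \eqref{eq:weld-2-qd} (testing against a single convenient observable, e.g.\ a conformal modulus) determines the constant $C=C(\kappa)$.

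The main obstacle is exactly this third step: upgrading ``the interface is an $\SLE_\kappa$-type loop'' to ``the interface law is $\SLE_\kappa^\sep$'', which requires the loop-welding machinery rather than a routine computation. By comparison, the weight computation in the second step is a mechanical matching of the Bessel/Brownian descriptions, and the $\ell$-factor is a short base-point-counting argument; the remaining technical work is the measure-theoretic bookkeeping that passes from arc welding to loop welding, keeping track of the extra gluing randomness and of the disintegration in $\ell$ through all the reweightings defining $\QD_{1,0}$ and $\QS_2$.
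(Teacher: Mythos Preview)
The paper does not prove this proposition at all: it is quoted verbatim as \cite[Proposition~6.5]{ACSW24} and used as a black box. So there is no ``paper's own proof'' to compare against; the relevant comparison is with the argument in \cite{ACSW24} (and the closely related \cite{ACSW-loop}).

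Your outline is broadly aligned with how that result is actually established in the cited literature, and you have correctly located the crux in your third step. A couple of points are worth flagging. First, your second step is too optimistic as stated: one does not simply ``apply the $W_1=W_2=2$ arc-welding theorem but along a loop'' and read off weight $4-\gamma^2$ from matching Brownian drifts. The passage from arc welding of disks to loop welding producing a sphere is not a parameter substitution; in the literature it goes through an intermediate identification (e.g.\ via the two-pointed sphere/whole-plane $\SLE$ coupling of \cite{DMS-mating-of-tree} or the zipper arguments in \cite{AHS23,ACSW-loop}), and the weight $4-\gamma^2$ emerges from that construction rather than from a direct Bessel computation. Second, your accounting for the factor $\ell$ is morally right but the sentence ``one power of which is absorbed by the residual conformal symmetry'' is not a proof; in practice this factor is tracked by disintegrating carefully and comparing with the definition of uniform welding, not by a symmetry-counting heuristic.

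In short: your high-level plan matches the route taken in \cite{ACSW24,ACSW-loop}, but steps two and three each hide a genuine theorem rather than a computation, and the present paper makes no attempt to reprove them.
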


\subsection{Stationarity of the SLE shape measure}\label{subsec:shape}

Recall that $\mathcal{L}_\kappa(\mathcal{C})$ is the shape measure of $\SLE_\kappa^{\rm sep}(\mathcal{C})$ (Lemma~\ref{lem:shape}). We now establish the stationarity of $\mathcal{L}_\kappa(\mathcal{C})$ for the Markov chain $(\eta^i)_{i \ge 0}$, the last ingredient required to prove Theorem~\ref{thm:ratio}.

\begin{proposition}\label{prop:stationary-bcle-simple}
    Fix $\kappa \in (8/3,4)$. If $\eta^0$ is sampled from $\mathcal{L}_\kappa(\mathcal{C})$, then the law of $\eta^1$ is also $\mathcal{L}_\kappa(\mathcal{C})$.
\end{proposition}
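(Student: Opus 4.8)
The plan is to realize one step of the Markov chain as an explicit operation on a conformally welded quantum surface and then invoke the welding identity of Proposition~\ref{prop:weld-sle-loop} together with the known welding descriptions of the two building blocks of the chain (the CLE$_\kappa$ red-blue exploration and the fuzzy-Potts blue-red interface under blue boundary conditions). Concretely, starting from $\eta^0\sim\mathcal{L}_\kappa(\mathcal{C})$, we attach to it a sample of $\QS_2$-type LQG mass on the $+\infty$ side so that, after translating to $\Loop_0(\mathcal C)$, the loop $\eta^0$ carries the law $\SLE_\kappa^{\rm sep}(\mathcal C)$ of the welding interface in~\eqref{eq:weld-2-qd}. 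The key point is that the composite move ``grow a CLE$_\kappa$ loop $\eta^*$ around $+\infty$, then grow a blue-red fuzzy-Potts interface $\widetilde\eta^1$ around $+\infty$ inside $\mathcal C^+_{\eta^*}$'' can be matched, on the LQG side, with peeling off an independent quantum disk (or pair of quantum disks) along the boundary loop and re-welding, which leaves the law of the decorated surface---and hence of the innermost separating interface---invariant.

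The key steps, in order, would be: (1) Use Lemma~\ref{lem:shape} and Proposition~\ref{prop:weld-sle-loop} to set up a ``reference'' decorated quantum surface whose marked separating loop has shape law $\mathcal{L}_\kappa(\mathcal C)$; here one works on the cylinder $\mathcal C$ with $\QS_2$ and the two $\QD_{1,0}$'s, exactly as in the proof of the analogous statement in~\cite[Section 6]{ACSW-loop}. (2) Identify the CLE$_\kappa$ half-step $\eta^0\mapsto\eta^*$ with a conformal welding operation: conditionally on $\eta^0$ and the quantum boundary length, the region between $\eta^0$ and the next CLE$_\kappa$ loop $\eta^*$ around $+\infty$, decorated with the CLE$_\kappa$ in between, is a quantum surface whose law is governed by the CLE$_\kappa$-on-a-disk welding (again as in~\cite{ACSW-loop}); peeling it off returns a surface of the same type with a new boundary loop. (3) Identify the fuzzy-Potts half-step $\eta^*\mapsto\widetilde\eta^1$ with a further welding operation, using the BCLE description from Section~\ref{subsec:construct-interface}: under blue boundary conditions the outermost blue-red interface around $+\infty$ arises from the $\BCLE^\clockwise_{\kappa'}(\rho_B')$/$\BCLE^\clockwise_\kappa(\rho)$ exploration of Step~1--Step~2, and each elementary BCLE loop, coupled with the appropriate LQG surface, admits a conformal welding description; iterating the (a.s.\ finitely many) exploration steps and welding, one peels off another independent quantum surface and is left again with a $\QD_{1,0}$-type surface of the same boundary length. (4) Compose steps (2) and (3): after the full Markov step, the decorated surface (restricted to the $+\infty$ side of the new loop $\widetilde\eta^1$) is again distributed as $\QS_2\otimes\SLE_\kappa^{\rm sep}$ restricted appropriately, so by Lemma~\ref{lem:shape} the shape of $\eta^1$ is again $\mathcal{L}_\kappa(\mathcal C)$. (5) Translate back into $\Loop_0(\mathcal C)$ to conclude, using translation invariance on the cylinder.

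I expect the main obstacle to be step (3): packaging the fuzzy-Potts blue-red interface---which is built from an \emph{iterated} BCLE exploration that stops at a random finite time---as a single clean conformal welding of an independent LQG surface along the boundary loop. One must check that each elementary piece of the exploration ($\BCLE^\clockwise_{\kappa'}(\rho_B')$ true/false loops, the intervening CLE$_{\kappa'}$ loop, and the $\BCLE^\clockwise_\kappa(\rho)$ true/false loops) has a welding description compatible with sampling a marked point from the quantum length measure, and that composing these across the (geometric) number of iterations preserves the relevant surface law with the correct $\ell$-weighting; this is where the conformal welding inputs alluded to in Section~\ref{sec:proof} (``we also need some conformal welding arguments as input'') enter, presumably via results of the type in~\cite{ACSW-loop,MSW22-simple}. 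The analogous CLE$_\kappa$ half-step (step (2)) is essentially identical to~\cite[Section 6]{ACSW-loop} and should go through verbatim; the normalization constants are never needed in this proposition since we only assert equality of probability \emph{shape} measures, with the constants already pinned down in Lemma~\ref{lem:shape}.
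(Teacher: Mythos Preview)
Your overall architecture---couple the Markov step to a conformal welding on $\QS_2$ via Proposition~\ref{prop:weld-sle-loop} and then argue that the inner loop is again an $\SLE_\kappa^{\rm sep}$ interface---is exactly the paper's approach. Two points, however, deserve correction.

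First, your step~(2) is unnecessarily elaborate. The paper dispatches the CLE$_\kappa$ half-step $\eta^0\mapsto\eta^*$ with no welding at all: since $\mathcal L_\kappa(\mathcal C)$ is by definition the shape measure of whole-plane CLE$_\kappa$ loops separating $\pm\infty$, the domain Markov property of CLE$_\kappa$ immediately forces the next such loop $\eta^*$ (after recentering) to have the same shape law $\mathcal L_\kappa(\mathcal C)$. So the entire problem reduces to showing that the single fuzzy-Potts half-step $\eta^*\mapsto\widetilde\eta^1$ preserves $\mathcal L_\kappa(\mathcal C)$, and one may as well sample $\eta^*$ directly from $\SLE_\kappa^{\rm sep}(\mathcal C)$.

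Second, and more importantly, your step~(3) names the obstacle but not the mechanism that resolves it. You write that after the iterated BCLE exploration one ``peels off another independent quantum surface and is left again with a $\QD_{1,0}$-type surface,'' but you give no reason this should hold. The paper's resolution is to package the entire blue-red exploration into a single annular quantum surface measure $\mathrm{QA}(\rho)$ satisfying the welding identity $\QD_{1,0}\otimes\mu=\int_0^\infty\mathrm{QA}(\rho;\ell)\times\ell\,\QD_{1,0}(\ell)\,d\ell$ (Lemma~\ref{lem:weld-blue-red}), and then to observe that $\mathrm{QA}(\rho)$ is itself a uniform welding of elementary annuli $\widetilde{\mathrm{QA}}(W)$ and $\mathrm{GA}$ each of which is \emph{symmetric under interchange of its two boundary lengths}. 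It is precisely this symmetry, fed into the two-disk welding identity~\eqref{eq:weld-2-qd}, that forces the inner interface of $\QD_{1,0}\times\widetilde{\mathrm{QA}}(W)\times\QD_{1,0}$ (and iteratively of the full $\QD_{1,0}\times\mathrm{QA}(\rho)\times\QD_{1,0}$) to again be an $\SLE_\kappa^{\rm sep}$ loop (Lemma~\ref{lem: QA(W) welding}). Your proposal does not mention this symmetry; without it, the assertion that the remaining surface is ``again $\QD_{1,0}$-type'' is exactly the statement to be proved.
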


As outlined earlier, the proof of Proposition~\ref{prop:stationary-bcle-simple} relies on SLE/LQG coupling. Specifically, we realize the loop $\mathcal{L}_{B \to R}$ (see Section~\ref{subsec:construct-interface}) as an interface arising from the conformal welding of a quantum surface with annular topology and a quantum disk. The explicit law of this quantum surface is not required; we rely solely on its symmetry property.

\begin{lemma}\label{lem:weld-blue-red}
    Let $\kappa \in (8/3,4)$ and $\rho=\frac{3\kappa}{2}-6$. Let $\mu$ denote the law of $\mathcal{L}_{B \to R}$. Then there exists a measure $\mathrm{QA}(\rho)$ on the space of quantum surfaces with annular topology such that
    \begin{equation}\label{eq:bcle-welding-simple}
        \QD_{1,0} \otimes \mu = \int_0^\infty \mathrm{QA}(\rho;\ell) \times \ell\,\QD_{1,0}(\ell) \dd \ell.
    \end{equation}
\end{lemma}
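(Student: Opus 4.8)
The plan is to realize $\mathcal{L}_{B\to R}$ as a welding interface by peeling off, one exploration step at a time, the structure described in Section~\ref{subsec:construct-interface}, and at each step invoking a known conformal welding identity for the relevant BCLE or CLE layer on top of a $\sqrt\kappa$-LQG surface. Concretely, start from a $\QD_{1,0}$ decorated by an independent sample of $\mathcal{L}_{B\to R}$; by construction $\mathcal{L}_{B\to R}$ is the outermost loop surrounding the marked point after the alternating $\BCLE^\clockwise_{\kappa'}(\rho_B')$ / CLE$_{\kappa'}$ / $\BCLE^\clockwise_\kappa(\rho)$ exploration terminates. The quantum disk to the inside of $\mathcal{L}_{B\to R}$ (containing the marked point) is again a $\QD_{1,0}$, by the conformal Markov property of the fuzzy Potts exploration together with the fact that conditionally on its outer boundary the interior of a red cluster is a CLE$_\kappa$ carpet (as recalled at the end of Section~\ref{subsec:construct-interface}); its role on the right-hand side of~\eqref{eq:bcle-welding-simple} is the $\ell\,\QD_{1,0}(\ell)$ factor, with the $\ell$ coming from the uniform-welding freedom for loops without marked points as in Proposition~\ref{prop:weld-sle-loop}. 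Everything strictly outside $\mathcal{L}_{B\to R}$ then assembles into a single quantum surface with annular topology (one boundary being the original $\partial\QD_{1,0}$, the other being $\mathcal{L}_{B\to R}$ itself), and we simply \emph{define} $\mathrm{QA}(\rho)$ to be its law, disintegrated over the quantum length $\ell$ of the $\mathcal{L}_{B\to R}$ boundary.

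The substantive work is showing that the object glued together from the "outside" pieces really is a bona fide quantum surface of annular type obtained by iterated conformal welding along quantum length, i.e.\ that each exploration step corresponds to a genuine welding identity. I would proceed layer by layer following the exploration in Section~\ref{subsec:construct-interface}: (i) the outermost $\BCLE^\clockwise_{\kappa'}(\rho_B')$ loop $\eta_0'$ surrounding the marked point, welded onto an LQG disk carrying the rest of $\BCLE_{\kappa'}(\rho_B')$ and whatever lies outside it; (ii) if the marked point is in a true loop, the non-nested CLE$_{\kappa'}$ loop $\widetilde\eta_0$ around it, for which one uses the CLE$_{\kappa'}$ welding results of~\cite{MSW22-simple, ACSW24}; (iii) the $\BCLE^\clockwise_\kappa(\rho)$ loop, using the BCLE welding input, with termination exactly when the marked point lands inside a true loop. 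At each finite step the surface outside the current innermost loop is obtained by welding the previously accumulated outside-surface to a new "ring" along matching quantum lengths, and the innermost remaining region is a quantum surface of disk type with one interior marked point whose law is (a constant times) $\QD_{1,0}$ by the Markov property of the exploration plus the CLE$_\kappa$-carpet description. Summing over the a.s.\ finite number of iterations and collecting the cumulative welding freedoms yields~\eqref{eq:bcle-welding-simple}; the annular surface $\mathrm{QA}(\rho)$ is then the (infinite) measure obtained from all of these contributions.

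The main obstacle I anticipate is bookkeeping the welding freedoms and constants consistently across the variable number of exploration steps, and more seriously, making sure the relevant welding identities are available in precisely the form needed: the $\BCLE_\kappa(\rho)$ and $\BCLE_{\kappa'}(\rho_B')$ layers must be welded onto $\sqrt\kappa$-LQG (equivalently $\kappa'=16/\kappa$) surfaces with the correct marked-point structure, and the CLE$_{\kappa'}$ step in case (i) of Step~1 bridges the two regimes. If a ready-made BCLE welding statement at this level of generality is not already in the literature, one should instead appeal to the fact that $\mathcal{L}_{B\to R}$ is an SLE$_\kappa$-type simple curve (as noted in Section~\ref{subsec:construct-interface}, via~\cite{MSW2017, MSW21-nonsimple}) and invoke an SLE$_\kappa(\underline\rho)$/LQG welding of the type used in~\cite{ACSW24}, absorbing all the "outside" randomness into the definition of $\mathrm{QA}(\rho)$ without computing it. Since the proof of Proposition~\ref{prop:stationary-bcle-simple} only uses the symmetry of $\mathrm{QA}(\rho)$ (invariance under the relevant conformal involution of the annulus) and not its explicit law, it suffices to carry out the welding carefully enough to read off that symmetry, which is inherited from the reflection symmetry of $\mathcal{M}_2^{\rm disk}$ and $\QD_{1,0}$ together with the conformal invariance of the fuzzy Potts exploration.
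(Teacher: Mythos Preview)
Your plan is essentially the paper's own argument: iterate the exploration of Section~\ref{subsec:construct-interface} and apply, at each layer, the CLE$_{\kappa'}$ welding of~\cite[Proposition~4.4]{ACSW-loop} and the BCLE welding of~\cite[Theorems~4.1 and~5.19]{LSYZ24}, so that $\mathrm{QA}(\rho)$ is explicitly the uniform welding of the annular pieces $\widetilde{\mathrm{QA}}(W)$ (from~\cite{LSYZ24}) and $\mathrm{GA}$ (from~\cite{ACSW-loop}). Your fallback is unnecessary since these inputs exist off the shelf, and note that the symmetry used downstream in Proposition~\ref{prop:stationary-bcle-simple} comes not from $\QD_{1,0}$ but from the known boundary-swap symmetry of each building block $\widetilde{\mathrm{QA}}(W)$ and $\mathrm{GA}$ separately.
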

\begin{proof}
    The conformal welding identity follows from the exploration of $\mathcal{L}_{B \to R}$ together with the conformal welding results for CLE$_{\kappa'}$~\cite[Proposition 4.4]{ACSW-loop} and BCLE~\cite[Theorems 4.1 and 5.19]{LSYZ24}. Explicitly, $\mathrm{QA}(\rho)$ is constructed via the uniform conformal welding of $\widetilde{\mathrm{QA}}(W)$~\cite{LSYZ24} (for appropriate parameters $W=W(\rho)$) and $\mathrm{GA}$~\cite{ACSW-loop}.
\end{proof}

\begin{lemma}\label{lem: QA(W) welding}
    Let $\bar\mu$ be the law of a loop $\bar\eta$ that is coupled with an $\mathrm{SLE}^{\rm sep}_\kappa$ loop, for which the following welding identity holds
    $$ \mathrm{QS}_2\otimes (\mathrm{SLE}^{\rm sep}_\kappa,\bar\mu)= \iint _{\mathbb{R}_+^2}\QD_{1,0}(\ell_1)\times \ell_1\,\widetilde{\mathrm{QA}}(W;\ell_1,\ell_2)\times \ell_2\,\QD_{1,0}(\ell_2) \dd \ell_1 \dd \ell_2. $$
    Then the marginal law of $\bar\eta$ is $C_1\,\mathrm{SLE}^{\rm sep}_\kappa$ for some constant $C_1$.
\end{lemma}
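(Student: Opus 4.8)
The plan is to extract the marginal law of $\bar\eta$ from the given welding identity by integrating out the two quantum disks, exactly as one extracts the law of an $\mathrm{SLE}^{\rm sep}_\kappa$ loop from Proposition~\ref{prop:weld-sle-loop}. The key point is that the right-hand side of the hypothesized identity has the same structure as $\mathrm{QS}_2 \otimes \mathrm{SLE}^{\rm sep}_\kappa$, but with an extra quantum surface $\widetilde{\mathrm{QA}}(W)$ of annular topology inserted between the two disks, carrying two boundary loops of lengths $\ell_1$ and $\ell_2$. Since the welding is along boundary loops with no marked points, the identity should be read as: sample the pair $(\ell_1,\ell_2)$, sample the three surfaces independently from $\QD_{1,0}(\ell_1)$, $\widetilde{\mathrm{QA}}(W;\ell_1,\ell_2)$, $\QD_{1,0}(\ell_2)$, weld uniformly, and record both interface loops as a pair distributed according to $(\mathrm{SLE}^{\rm sep}_\kappa,\bar\mu)$.

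First I would use the fact (already invoked implicitly in Lemma~\ref{lem:weld-blue-red} via~\cite{LSYZ24}) that $\widetilde{\mathrm{QA}}(W)$ itself admits a conformal welding decomposition into a quantum disk and simpler pieces, or more directly that uniformly welding $\widetilde{\mathrm{QA}}(W;\ell_1,\ell_2)$ with $\QD_{1,0}(\ell_2)$ along the $\ell_2$-boundary and integrating against $\ell_2\,\dd\ell_2$ reproduces a single $\QD_{1,0}$ decorated with the loop $\bar\eta$ on one side; this is precisely the content of an identity of the form $\int_0^\infty \widetilde{\mathrm{QA}}(W;\ell_1,\cdot)\times \ell_2\,\QD_{1,0}(\ell_2)\,\dd\ell_2 = c\,(\QD_{1,0}\otimes \bar\mu)(\ell_1)$ up to the appropriate disintegration, which should follow from~\cite[Theorems 4.1 and 5.19]{LSYZ24} together with the $\QD_{1,0}$ welding. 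Substituting this into the hypothesized identity collapses the right-hand side to $c \int_0^\infty \QD_{1,0}(\ell_1)\times\ell_1\,\QD_{1,0}(\ell_1)\,\dd\ell_1$ decorated additionally with $\bar\eta$, i.e.\ to $c\,C^{-1}\,\mathrm{QS}_2 \otimes (\mathrm{SLE}^{\rm sep}_\kappa, \bar\mu)$ by Proposition~\ref{prop:weld-sle-loop}, but now $\bar\eta$ appears as a further loop decorating the \emph{same} surface. Comparing with the original identity forces consistency, and taking the marginal over the surface on both sides yields that the law of $\bar\eta$ is a constant multiple of $\mathrm{SLE}^{\rm sep}_\kappa$.

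Alternatively — and this is probably the cleaner route — I would argue directly: the total mass and the surface-marginal of $\mathrm{QS}_2 \otimes (\mathrm{SLE}^{\rm sep}_\kappa, \bar\mu)$ is, by forgetting $\bar\eta$, just $\mathrm{QS}_2 \otimes \mathrm{SLE}^{\rm sep}_\kappa$. On the welded side, forgetting $\bar\eta$ and using the welding of $\widetilde{\mathrm{QA}}(W)$ with $\QD_{1,0}$ along the $\ell_2$-loop collapses the triple welding to a double welding of two $\QD_{1,0}$'s, reproducing the right side of~\eqref{eq:weld-2-qd}. Hence both sides agree after forgetting $\bar\eta$, consistent with Proposition~\ref{prop:weld-sle-loop}. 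Then, conditioning on the welded surface $S$ together with the $\mathrm{SLE}^{\rm sep}_\kappa$ loop $\eta$, the conditional law of $\bar\eta$ is determined by the conditional law of the $\widetilde{\mathrm{QA}}(W)$-piece and its further welding with the second disk; but by Proposition~\ref{prop:weld-sle-loop} applied to the complementary component cut out by $\eta$, that conditional law is exactly that of the interface of a uniform welding of two $\QD_{1,0}$'s of matched length, i.e.\ an independent $\mathrm{SLE}^{\rm sep}_\kappa$ loop in the appropriate complementary annular region. Since $\mathrm{SLE}^{\rm sep}_\kappa$ is conformally invariant, this identifies the marginal of $\bar\eta$ as $C_1\,\mathrm{SLE}^{\rm sep}_\kappa$ with $C_1$ the ratio of total masses.

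The main obstacle I anticipate is making precise the claim that $\widetilde{\mathrm{QA}}(W)$ welded (uniformly, along one of its two boundary loops) to a $\QD_{1,0}$ yields back a $\QD_{1,0}$ decorated by a loop whose law is again an $\mathrm{SLE}^{\rm sep}_\kappa$-type curve — i.e.\ the "associativity" of conformal welding for these annular surfaces and the correct bookkeeping of the $\ell_i$ weights and the uniform-welding factors. This requires carefully citing the right disintegration statements from~\cite{LSYZ24} (Theorems 4.1 and 5.19) and~\cite{ACSW-loop} (Proposition 4.4), and checking that the parameter $W=W(\rho)$ is chosen so that the annular surface is exactly the one appearing in those welding theorems; the constant $C_1$ is then pinned down by comparing total masses, which is a finite computation with the known normalizations.
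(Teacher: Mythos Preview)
Your proposal misses the one-line argument the paper actually uses: the symmetry $\widetilde{\mathrm{QA}}(W;\ell_1,\ell_2)=\widetilde{\mathrm{QA}}(W;\ell_2,\ell_1)$ (from \cite[Definition 3.12]{LSYZ24}). Because the annulus is symmetric under swapping its two boundary loops and the two outer pieces are both $\QD_{1,0}$, the right-hand side of the welding identity is invariant under exchanging the two interfaces (together with the two marked points of the sphere). Hence the marginal law of the second interface $\bar\eta$ coincides with that of the first, which by hypothesis is $\mathrm{SLE}^{\mathrm{sep}}_\kappa$; the reference to~\eqref{eq:weld-2-qd} is only to justify that the first interface is the $\mathrm{SLE}^{\mathrm{sep}}_\kappa$ loop. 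No further welding input is needed.

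By contrast, both of your routes have a genuine gap. In your first route you substitute the putative identity $\int \widetilde{\mathrm{QA}}(W;\ell_1,\ell_2)\times\ell_2\,\QD_{1,0}(\ell_2)\,\dd\ell_2=c\,(\QD_{1,0}\otimes\bar\mu)(\ell_1)$ back into the hypothesis and recover $cC^{-1}\,\mathrm{QS}_2\otimes(\mathrm{SLE}^{\rm sep}_\kappa,\bar\mu)$ on the right; comparing with the left gives only $cC^{-1}=1$, i.e.\ consistency, and ``taking the marginal over the surface'' then yields the tautology $(\mathrm{SLE}^{\rm sep}_\kappa,\bar\mu)=(\mathrm{SLE}^{\rm sep}_\kappa,\bar\mu)$, not any identification of $\bar\mu$. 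In your second route you assert that, conditionally on the first loop $\eta$, the complementary component is a uniform welding of two $\QD_{1,0}$'s; but the complementary component is $\widetilde{\mathrm{QA}}(W)$ welded to a single $\QD_{1,0}$, so Proposition~\ref{prop:weld-sle-loop} does not apply there. One can indeed deduce (from the hypothesis plus uniqueness of welding along the first SLE loop) that $\widetilde{\mathrm{QA}}(W)$ welded to $\QD_{1,0}$ is, as a bare surface, proportional to $\QD_{1,0}$; but to turn this into the desired statement about $\bar\eta$ you would still need to swap the roles of $\ell_1$ and $\ell_2$---which is precisely the symmetry of $\widetilde{\mathrm{QA}}(W)$ you never invoke.
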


\begin{proof}
    The claim follows from the symmetry $\widetilde{\mathrm{QA}}(W;a,b)=\widetilde{\mathrm{QA}}(W;b,a)$ from~\cite[Definition 3.12]{LSYZ24}, and the fact that the welding interface of two quantum disks follows the law of an SLE loop measure~\eqref{eq:weld-2-qd}.
\end{proof}

\begin{proof}[Proof of Proposition~\ref{prop:stationary-bcle-simple}]
    Recall the auxiliary loops $\eta^*$ and $\widetilde{\eta}^1$ from the construction of $\eta^1$ in Section~\ref{subsec:mc}. Since $\eta^0$ is sampled from $\mathcal{L}_\kappa(\cC)$ (i.e., the shape measure of the counting measure on CLE$_\kappa$ loops separating $\pm\infty$), by the domain Markov property of CLE$_\kappa$, 
    the law of $\eta^*$---when shifted to be an element of $\Loop_0(\mathcal{C})$---is $\mathcal{L}_\kappa(\mathcal{C})$ as well. Furthermore, conditioned on $\eta^*$, the law of $\widetilde{\eta}^1$ is the pushforward of the law of $\mathcal{L}_{B \to R}$ under the conformal map from $\mathbb{D}$ to $\mathcal{C}_{\eta^*}^+$. Since we only need to specify the shape measure of $\widetilde{\eta}^1$, we henceforth assume (with slight abuse of notation) that $\eta^*$ is sampled from $\SLE_\kappa^{\rm sep}(\mathcal{C})$. We will then show that $\widetilde{\eta}^1$ defined in this way also follows the law $\SLE_\kappa^{\rm sep}(\mathcal{C})$.
    
    Consider a quantum surface $(\mathcal{C},h,\pm \infty)/{\sim}_\gamma$ sampled from $\mathrm{QS}_2$. Independently, sample $\eta^*$ from $\SLE_\kappa^{\rm sep}(\mathcal{C})$, and then sample $\widetilde{\eta}^1$ on $\mathcal{C}_{\eta^*}^+$ according to the pushforward of $\mu$. Let $\tilde\mu$ denote the law of $\widetilde{\eta}^1$.
    Combining Proposition~\ref{prop:weld-sle-loop} and Lemma~\ref{lem:weld-blue-red}, we obtain
    \begin{equation*}
        \mathrm{QS}_2 \otimes (\SLE_\kappa^{\rm sep}, \tilde\mu) = C \iint_{\mathbb{R}_+^2} \QD_{1,0}(\ell_1) \times \ell_1\,\mathrm{QA}(\rho;\ell_1,\ell_2) \times \ell_2\,\QD_{1,0}(\ell_2) \dd \ell_1 \dd \ell_2.
    \end{equation*}
    Integrating over $\eta^*$ and applying~\eqref{eq:bcle-welding-simple} (using the same argument as Lemma~\ref{lem: QA(W) welding}), we derive
    \begin{equation}\label{eq:weld-eta-1}
        \mathrm{QS}_2 \otimes \tilde\mu = C \int_0^\infty \QD_{1,0}(\ell_2) \times \ell_2\,\QD_{1,0}(\ell_2) \dd \ell_2.
    \end{equation}
    A comparison between~\eqref{eq:weld-2-qd} and~\eqref{eq:weld-eta-1} implies that the marginal law of $\widetilde{\eta}^1$ is $\SLE_\kappa^{\rm sep}$. Indeed, we can decompose $\mathrm{QA}(\rho)$ into quantum surfaces $\widetilde{\mathrm{QA}}(W)$ and $\mathrm{GA}$, and then repeat the argument in Lemma~\ref{lem: QA(W) welding} for each single-step welding (here we also use the symmetry of $\mathrm{GA}$ under boundary interchange~\cite[Proposition 7.6]{ACSW-loop}). In particular, the law of $\eta^1$ is $\mathcal{L}_\kappa(\mathcal{C})$.
\end{proof}

\begin{proof}[Proof of Theorem~\ref{thm:ratio}]
    Combining Propositions~\ref{prop:markov} and~\ref{prop:stationary-bcle-simple}, we obtain that $\mathcal{L}_\kappa(\cC)=\wt{\mathcal{L}}_\kappa(\cC)$. Then $\SLE_\kappa^\sep=\mathsf{C}(\kappa)^2 \,\wt{\SLE}_\kappa^\sep$ follows from Lemma~\ref{lem:shape}.
\end{proof}

\section{Derivation of $\mathsf C(\kappa)$: Proof of Theorem~\ref{thm:value}}\label{sec:computation}

In this section, we compute the conformal radii of the interfaces $\mathcal L_{R \to B}$ and $\mathcal L_{B \to R}$ (Proposition~\ref{prop:cr-moment-intro}), and deduce the value of $\mathsf C(\kappa)$ in Theorem~\ref{thm:ratio}.
 
\subsection{Conformal radii of spin interfaces: Proof of Proposition~\ref{prop:cr-moment-intro}}

Recall from Section~\ref{subsec:construct-interface} the interfaces $\mathcal{L}_{R \to B}$ and $\mathcal{L}_{B \to R}$ on $\mathbb{D}$. Let $\mathsf{R}_{R \to B}=\mathrm{CR}(0,D(\mathcal{L}_{R \to B}))$ and $\mathsf{R}_{B \to R}=\mathrm{CR}(0,D(\mathcal{L}_{B \to R}))$ be the conformal radii, viewed from the origin, of the simply connected domains they enclose. This subsection is devoted to proving Proposition~\ref{prop:cr-moment-intro}, which gives the moments of $\mathsf{R}_{R \to B}$ and $\mathsf{R}_{B \to R}$.

We begin by recalling the conformal radii of BCLE from~\cite{LSYZ24}.

\begin{lemma}\label{lem:cr-bcle}
    Let $\kappa \in (2,4)$, $\kappa'=16/\kappa $, $\rho \in (-2,\kappa-4)$, and $\rho_B'=\kappa'-4+\frac{\kappa'}{4}\rho$ as in~\eqref{eq:parameter-nonsimple}.
    Denote by $\mathcal L$ the loop in $\BCLE_\kappa(\rho)$ surrounding the origin which can be either clockwise or counterclockwise, and let $D(\mathcal L)$ be the connected component of $\mathbb D \setminus \mathcal L$ that contains the origin. Let $\{ 0 \in \BCLE_\kappa^\clockwise(\rho) \}$ (resp.\ $\{ 0 \notin \BCLE_\kappa^\clockwise(\rho) \}$) the event that $\mathcal L$ is a clockwise (resp.\ counterclockwise) loop. For $\lambda> \frac{\kappa}{8}-1$ and $\theta=\frac{\pi}{\kappa} \sqrt{(4-\kappa)^2-8\kappa \lambda}$, we have
    \begin{align}
        \mathbb{E}[{\rm CR}(0,D(\mathcal L))^\lambda \1_{0 \in {\rm BCLE}^\clockwise_\kappa(\rho)}] &= \frac{\sin(\frac{\pi(4-\kappa)}{4}) \sin(\frac{2\pi}{\kappa}(\kappa-\rho-4))}{\sin(\frac{\pi(4-\kappa)}{\kappa}) \sin(\frac{\pi}{4}(\kappa - 2\rho-4))} \cdot \frac{\sin( \frac{\kappa-2\rho-4}{4} \theta)}{\sin( \frac{\kappa}{4} \theta)}, \label{eq:cr-simple-bcle-true} \\
        \mathbb{E}[{\rm CR}(0,D(\mathcal L))^\lambda \1_{0 \notin {\rm BCLE}^\clockwise_\kappa(\rho)}] &= \frac{\sin(\frac{\pi(4-\kappa)}{4}) \sin(\frac{2\pi}{\kappa}(\rho+2))}{\sin(\frac{\pi(4-\kappa)}{\kappa}) \sin(\frac{\pi}{4}(\kappa - 2\rho-4))} \cdot \frac{\sin( \frac{2\rho+8-\kappa}{4} \theta)}{\sin( \frac{\kappa}{4} \theta)}. \label{eq:cr-simple-bcle-false}
    \end{align}
    If $\lambda \le \frac{\kappa}{8}-1$, the left hand sides of~\eqref{eq:cr-simple-bcle-true} and~\eqref{eq:cr-simple-bcle-false} are infinite.
    
    Denote by $\mathcal L'$ the loop in $\BCLE_{\kappa'}(\rho_B')$ surrounding the origin which can be either clockwise or counterclockwise, and let $D(\mathcal L')$ be the connected component of $\mathbb D \setminus \mathcal L'$ that contains the origin. Let $\{ 0 \in \BCLE_{\kappa'}^\clockwise(\rho_B') \}$ (resp.\ $\{ 0 \notin \BCLE_{\kappa'}^\clockwise(\rho_B') \}$) the event $\mathcal L'$ is a clockwise (resp.\ counterclockwise) loop. For $\lambda> \frac{\kappa'}{8}-1$ and $\theta=\frac{\pi}{\kappa} \sqrt{(4-\kappa)^2-8\kappa \lambda}$, we have
    \begin{align}
        \mathbb{E}[{\rm CR}(0,D(\mathcal L'))^\lambda \1_{0 \in {\rm BCLE}^\clockwise_{\kappa'}(\rho_B')}] &= \frac{\sin(\frac{\pi(4-\kappa)}{\kappa}) \sin(-\frac{\pi}{2} \rho)}{\sin(\frac{\pi(4-\kappa)}{4}) \sin(\frac{2\pi}{\kappa}(\rho + 2))} \cdot \frac{\sin( \frac{\kappa-2\rho-4}{4} \theta)}{\sin(\theta)}, \label{eq:cr-nonsimple-bcle-true} \\
        \mathbb{E}[{\rm CR}(0,D(\mathcal L'))^\lambda \1_{0 \notin {\rm BCLE}^\clockwise_{\kappa'}(\rho_B')}] &= \frac{\sin(\frac{\pi(4-\kappa)}{\kappa}) \sin(\frac{\pi}{4}(\kappa-2\rho - 4))}{\sin(\frac{\pi(4-\kappa)}{4}) \sin(\frac{2\pi}{\kappa}(\rho + 2))} \cdot \frac{\sin( \frac{2\rho+4}{4} \theta)}{\sin(\theta)}. \label{eq:cr-nonsimple-bcle-false}
    \end{align}
    If $\lambda \le \frac{\kappa'}{8}-1$, the left hand sides of~\eqref{eq:cr-nonsimple-bcle-true} and~\eqref{eq:cr-nonsimple-bcle-false} are infinite.
\end{lemma}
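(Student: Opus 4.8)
The plan is to deduce all four identities from the conformal-radius computations for $\BCLE$ carried out in~\cite{LSYZ24}, after reconciling conventions and performing the relevant change of variables; let me first recall why such formulas take a trigonometric form. The loop of $\BCLE_\kappa(\rho)$ surrounding the origin is produced by the branch towards $0$ of the $\SLE_\kappa(\rho;\kappa-6-\rho)$ exploration tree defining $\BCLE_\kappa(\rho)$, which is a radial process. Parametrizing it by capacity as seen from $0$, so that the conformal radius of the component of the complement of the hull containing $0$ equals $e^{-t}$ at time $t$, and letting $\tau$ be the time at which $0$ is first disconnected from $\partial\D$, one has $\CR(0,D(\mathcal L))=e^{-\tau}$, hence $\mathbb E[\CR(0,D(\mathcal L))^\lambda\1_A]=\mathbb E[e^{-\lambda\tau}\1_A]$ for each of the two events $A\in\{\{0\in\BCLE^\clockwise_\kappa(\rho)\},\{0\notin\BCLE^\clockwise_\kappa(\rho)\}\}$. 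The angular coordinate recording the positions of the two force points relative to the tip is a one-dimensional diffusion on an interval, and writing $\phi(\theta)=\mathbb E_\theta[e^{-\lambda\tau}\1_A]$, the Feynman--Kac formula shows that $\phi$ solves the second-order linear ODE obtained by adding the killing rate $-\lambda$ to the generator of this diffusion, whose solutions oscillate with frequency $\tfrac{\pi}{\kappa}\sqrt{(4-\kappa)^2-8\kappa\lambda}=\theta$ (in the notation of the lemma). The behaviour of $\phi$ at the two endpoints of the interval, which records whether the disconnection encloses $0$ by a clockwise (true) or a counterclockwise (false) loop, selects the particular linear combination and yields~\eqref{eq:cr-simple-bcle-true}--\eqref{eq:cr-simple-bcle-false}; and $\lambda=\tfrac{\kappa}{8}-1$ is precisely the value at which $e^{-\lambda\tau}$ ceases to be integrable, reflecting the polynomial tail of $\CR(0,D(\mathcal L))$ near $0$.

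For the non-simple range, the loop of $\BCLE_{\kappa'}(\rho_B')$ around $0$ is discovered by the same device applied to the radial $\SLE_{\kappa'}(\rho_B';\kappa'-6-\rho_B')$ exploration, and~\cite{LSYZ24} supplies the corresponding Laplace transforms $\mathbb E[e^{-\lambda\tau}\1_A]$ in the parameters $(\kappa',\rho_B')$. To obtain~\eqref{eq:cr-nonsimple-bcle-true}--\eqref{eq:cr-nonsimple-bcle-false} I would substitute $\kappa'=16/\kappa$ and $\rho_B'=\kappa'-4+\tfrac{\kappa'}{4}\rho$ from~\eqref{eq:parameter-nonsimple} and simplify: one checks that the natural frequency $\tfrac{\pi}{\kappa'}\sqrt{(4-\kappa')^2-8\kappa'\lambda}$ becomes $\tfrac{\pi}{4}\sqrt{(4-\kappa)^2-8\kappa\lambda}=\tfrac{\kappa}{4}\theta$ under this substitution, while the threshold $\tfrac{\kappa'}{8}-1$ is unaffected, and the various sine prefactors reorganize into the stated closed forms. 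The unoriented version (true plus false contributions) should recover the conformal radius of the $\BCLE$ loop around $0$, which gives a useful consistency check.

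The one genuinely delicate point is the bookkeeping of orientations and normalizations: one must match the notion of true versus false (equivalently clockwise versus counterclockwise) loop and the marked-point configuration of the radial exploration used in~\cite{LSYZ24} with the conventions of Section~\ref{subsec:pre-bcle}, so that the events $\{0\in\BCLE^\clockwise_\kappa(\rho)\}$, $\{0\notin\BCLE^\clockwise_\kappa(\rho)\}$ and their $\kappa'$ analogues are assigned the correct boundary data in the ODE above, and so that the convention $\theta=\tfrac{\pi}{\kappa}\sqrt{(4-\kappa)^2-8\kappa\lambda}$ is used uniformly in both parameter ranges. Once this dictionary is fixed, the remaining content of the proof is the routine trigonometric simplification of the formulas of~\cite{LSYZ24} under the change of variables, together with quoting the moment thresholds from there.
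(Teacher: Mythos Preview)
Your proposal is correct and follows essentially the same approach as the paper: the first two identities are quoted directly from \cite[Theorem~1.6]{LSYZ24}, and the last two from \cite[Theorem~1.8]{LSYZ24} after the substitution $\rho_B'=\kappa'-4+\tfrac{\kappa'}{4}\rho$. Your surrounding discussion of the radial exploration and the Feynman--Kac ODE is accurate background but not needed for the proof itself, which is purely a citation plus change of variables.
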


\begin{proof}
    The first and second displays follow directly from~\cite[Theorem 1.6]{LSYZ24}. The third and fourth displays are derived from~\cite[Theorem 1.8]{LSYZ24} after substituting $\rho_B'=\kappa'-4+\frac{\kappa'}{4}\rho$.
\end{proof}

We now provide the proof of Proposition~\ref{prop:cr-moment-intro} based on Lemma~\ref{lem:cr-bcle}.

\begin{proof}[Proof of Proposition~\ref{prop:cr-moment-intro}]
    First, by the discussion in Section~\ref{subsec:construct-interface}, we know that $\mathcal{L}_{R \to B}$ is the unique loop in (the non-nested) $\CLE_\kappa$ that surrounds the origin. Thus,~\eqref{eq:cr-1-23-intro} follows directly from~\cite[Theorem 1]{ssw-radii}.

    We now turn to the proof of~\eqref{eq:cr-23-1-intro}. Let $\lambda > \frac{2}{\kappa'}+\frac{3\kappa'}{32}-1=\frac{\kappa}{8}+\frac{3}{2\kappa}-1$.
    Recall the construction of $\mathcal{L}_{B \to R}$ and the parameters $\rho$ and $\rho_B'$ from Section~\ref{subsec:construct-interface}. Let $D_1'$ be the domain defined in \textit{Step 1}. By the independence of each exploration step,
    \begin{equation}\label{eq:step1}
        \mathbb{E}[\CR(0,D_1')^\lambda]=\mathbb{E}[\CR(0,D(\mathcal L'))^\lambda \mathbf{1}_{0 \in \BCLE_{\kappa'}^\clockwise(\rho_B')}] \cdot C_{\rm CLE}(\lambda) + \mathbb{E}[\CR(0,D(\mathcal L'))^\lambda \mathbf{1}_{0 \notin \BCLE_{\kappa'}^\clockwise(\rho_B')}].
    \end{equation}
    Here, the domain $D(\mathcal L')$ is defined as in Lemma~\ref{lem:cr-bcle}, and $C_{\rm CLE}(\lambda)=\mathbb E[\CR(0,D_{\Gamma'})^\lambda]$, where $D_{\Gamma'}$ is the connected component of a unit disk minus a non-nested $\CLE_{\kappa'}$ process $\Gamma'$ that contains the origin. By~\cite[Theorem 1]{ssw-radii}, we have
    \begin{equation}\label{eq:cr-cle}
        C_{\rm CLE}(\lambda)=\frac{\cos(\frac{\pi(4-\kappa)}{4})}{\cos( \frac{\kappa}{4} \theta)}.
    \end{equation}
    By \textit{Step 2} and the iterative exploration rules,
    \begin{equation}\label{eq:step2.0}
    \begin{split}
        \mathbb{E}[\CR(0,D(\mathcal{L}_{B \to R}))^\lambda]&=\mathbb{E}[\CR(0,D_1')^\lambda] \times \Big( \mathbb{E}[\CR(0,D(\mathcal{L}))^\lambda \mathbf{1}_{0 \in \BCLE_\kappa^\clockwise(\rho)}] \\
        &+\mathbb{E}[\CR(0,D(\mathcal{L}))^\lambda \mathbf{1}_{0 \notin \BCLE_\kappa^\clockwise(\rho)}] \times \mathbb{E}[\CR(0,D(\mathcal{L}_{B \to R}))^\lambda] \Big),
    \end{split}
    \end{equation}
    where the domain $D(\mathcal{L})$ is defined as in Lemma~\ref{lem:cr-bcle}. Write
    \begin{align*}
        &f(\lambda):=\mathbb{E}[\CR(0,D(\mathcal{L}))^\lambda \mathbf{1}_{0 \in \BCLE^\clockwise_\kappa(\rho)}] \times \mathbb{E}[\CR(0,D_1')^\lambda], \\
        \mbox{and} \quad &g(\lambda):=\mathbb{E}[{\CR(0,D(\mathcal{L}))^\lambda \mathbf{1}_{0 \notin \BCLE^\clockwise_\kappa(\rho)}] \times \mathbb{E}[\CR(0,D_1')^\lambda]}.
    \end{align*}
    Then~\eqref{eq:step2.0} becomes $\mathbb{E}[\CR(0,D(\mathcal{L}_{B \to R}))^\lambda]=f(\lambda)+g(\lambda) \cdot \mathbb{E}[\CR(0,D(\mathcal{L}_{B \to R}))^\lambda]$. It is clear that both $f$ and $g$ are decreasing in $\lambda$; moreover, Lemma~\ref{lem:cr-bcle} implies that $f(\lambda)=\infty$ if and only if $g(\lambda)=\infty$. Therefore, $\mathbb{E}[\CR(0,D(\mathcal{L}_{B \to R}))^\lambda] < \infty$ holds if and only if $g(\lambda)<1$. Since $g(0)<1$ and $\lim_{\lambda \to (\frac{\kappa}{8}+\frac{3}{2\kappa}-1)^+} g(\lambda)=\infty$ as $C_{\rm CLE}(\lambda)$ blows up, if we set $\lambda_0:=\sup \{ \lambda \in \mathbb{R} : g(\lambda)=1 \} \in (\frac{\kappa}{8}+\frac{3}{2\kappa}-1,0)$, then $\mathbb{E}[\CR(0,D(\mathcal{L}_{B \to R}))^\lambda]=\frac{f(\lambda)}{1-g(\lambda)}$ for $\lambda>\lambda_0$, and is infinite otherwise.
    
    Let $\lambda>\lambda_0$ and denote $x=\frac{\kappa}{4} \theta$, $y=\frac{2\rho}{4} \theta$, and $z=\theta$. By combining~\eqref{eq:step1},~\eqref{eq:cr-cle}, Lemma~\ref{lem:cr-bcle}, and using elementary trigonometric identities, the numerator $f(\lambda)$ can be expressed as
    \begin{align*}
        &\quad \frac{\sin(\frac{2\pi}{\kappa}(\kappa-\rho-4))}{\sin(\frac{2\pi}{\kappa}(\rho+2))} \cdot \frac{\sin(x-y-z)}{\sin(x)} \times \left( \frac{\sin(-\frac{\pi}{2} \rho) \cos(\frac{\pi(4-\kappa)}{4})}{\sin(\frac{\pi}{4}(\kappa-2\rho - 4))} \cdot \frac{\sin(x-y-z)}{\sin(z) \cos(x)} + \frac{\sin(y+z)}{\sin(z)} \right) \\
        &=\frac{\sin(\frac{2\pi}{\kappa}(\kappa-\rho-4))}{\sin(\frac{2\pi}{\kappa}(\rho+2))} \cdot \frac{\sin(x-y-z)}{\sin(x)} \cdot \frac{\sin(\frac{\pi}{4}(\kappa-2\rho-4)) \sin(x+y+z)-\sin(\frac{\pi}{4}(\kappa+2\rho-4)) \sin(x-y-z)}{2 \cos(x) \sin(z) \sin(\frac{\pi}{4}(\kappa-2\rho-4))},
    \end{align*}
    and the denominator $1-g(\lambda)$ is given by
    \begin{align*}
        &\quad 1-\frac{\sin(y+2z-x)}{\sin(x)} \times \left( \frac{\sin(-\frac{\pi}{2} \rho) \cos(\frac{\pi(4-\kappa)}{4})}{\sin(\frac{\pi}{4}(\kappa - 2\rho-4))} \cdot \frac{\sin(x-y-z)}{\sin(z) \cos(x)} + \frac{\sin(y+z)}{\sin(z)} \right) \\
        &=\frac{\sin(x-y-z)}{\sin(x)} \cdot \frac{\sin(\frac{\pi}{4}(\kappa-2\rho-4)) \sin(x+y+2z)-\sin(\frac{\pi}{4}(-\kappa-2\rho+4)) \sin(y+2z-x)}{2 \cos(x) \sin(z) \sin(\frac{\pi}{4}(\kappa-2\rho-4))}.
    \end{align*}
    Therefore, we derive
    
    \begin{align}\label{eq:CR-blue-bc}
        \mathbb{E}[\CR(0,D(\mathcal{L}_{B \to R}))^\lambda]&=\frac{ \sin(\frac{2\pi}{\kappa}(\kappa-\rho-4))}{\sin(\frac{2\pi}{\kappa}(\rho+2))} \nonumber \\
        &\times \frac{\sin(\frac{\pi}{4}(\kappa- 2\rho-4)) \sin(\frac{\kappa + 2\rho +4}{4} \theta) - \sin(\frac{\pi}{4}(\kappa+ 2 \rho - 4)) \sin( \frac{\kappa - 2\rho - 4}{4} \theta)}{\sin(\frac{\pi}{4}(\kappa- 2\rho-4)) \sin(\frac{\kappa + 2\rho +8}{4} \theta) - \sin(\frac{\pi}{4}(\kappa+ 2 \rho - 4)) \sin( \frac{\kappa -2\rho -8}{4} \theta)}.
    \end{align}
    Finally, for $\rho=\frac{3\kappa}{2}-6$, the above expression~\eqref{eq:CR-blue-bc} simplifies to~\eqref{eq:cr-23-1-intro}. The threshold $\lambda_0$ can be obtained by solving $1-g(\lambda_0)=0$. This concludes the proof.
\end{proof}

\subsection{Derivation of the constant $\mathsf{C}(\kappa)$: Proof of Theorem~\ref{thm:value}}

Finally, we derive the exact formula for $\mathsf{C}(\kappa)$ in~\eqref{eq:c-kappa} based on Proposition~\ref{prop:cr-moment-intro}.

\begin{proof}[Proof of Theorem~\ref{thm:value}]
    Recall that $\theta=\frac{\pi}{\kappa} \sqrt{(4-\kappa)^2-8\kappa\lambda}$, we write $h(\lambda):=\frac{\dd \theta}{\dd \lambda}$ for simplicity.
    Differentiating~\eqref{eq:cr-1-23-intro} with respect to $\lambda$, we find
    \begin{equation*}
        \mathbb{E}[(\mathsf{R}_{R \to B})^\lambda \log \mathsf{R}_{R \to B}]=h(\lambda) \cdot \frac{\cos(\frac{\pi(4-\kappa)}{\kappa}) \sin(\theta)}{\cos^2(\theta)},
    \end{equation*}
    which, after substituting $\lambda=0$, yields
    \begin{equation}\label{eq:log-cr-1-23}
        \mathbb{E}[\log \mathsf{R}_{R \to B}]=h(0) \tan(\tfrac{4\pi}{\kappa}).
    \end{equation}
    Similarly, the formula for $\mathbb{E}[\log \mathsf{R}_{B \to R}]$ can be derived from~\eqref{eq:cr-23-1-intro}. Denote the numerator and denominator in~\eqref{eq:cr-23-1-intro} respectively as
    \begin{align*}
        &U(\lambda):=\sin((\kappa-2) \theta) + 2 \cos(\pi \tfrac{4-\kappa}{2}) \sin((2-\tfrac{\kappa}{2}) \theta), \\
        \mbox{and} \quad &V(\lambda):=\sin((\kappa-1) \theta) + 2 \cos(\pi \tfrac{4-\kappa}{2}) \sin((1-\tfrac{\kappa}{2}) \theta).
    \end{align*}
    Then we have
    \begin{align*}
        &U'(\lambda)=h(\lambda) \left( (\kappa-2)\cos((\kappa-2) \theta) + (4-\kappa) \cos(\pi \tfrac{4-\kappa}{2}) \cos((2-\tfrac{\kappa}{2}) \theta) \right), \\
        \mbox{and} \quad &V'(\lambda)=h(\lambda) \left( (\kappa-1)\cos((\kappa-1) \theta) + (2-\kappa) \cos(\pi \tfrac{4-\kappa}{2}) \cos((1-\tfrac{\kappa}{2}) \theta) \right).
    \end{align*}
    Denote $x=\frac{\kappa \pi}{2}$ and $y=\frac{4\pi}{\kappa}$. By substituting $\lambda=0$ and using elementary trigonometric identities,
    \begin{align*}
        U(0)&=\sin(\pi \tfrac{(4-\kappa)(\kappa-2)} {\kappa})+2\cos(\pi \tfrac{4-\kappa}{2}) \sin(\pi \tfrac{(4-\kappa)^2}{2\kappa}) \\
        &=-\sin(2x+2y)+2\cos(x)\sin(x+2y)=\sin(2y), \\
        V(0)&=\sin(\pi \tfrac{(4-\kappa)(\kappa-1)}{\kappa})+2\cos(\pi \tfrac{4-\kappa}{2}) \sin(\pi \tfrac{(4-\kappa)(2-\kappa)}{2\kappa}) \\
        &=\sin(2x+y)-2\cos(x)\sin(x+y)=-\sin(y), \\
        U'(0)&=h(0) \left( (\kappa-2)\cos(\pi \tfrac{(4-\kappa)(\kappa-2)}{\kappa}) + (4-\kappa) \cos(\pi \tfrac{4-\kappa}{2}) \cos(\pi \tfrac{(4-\kappa)^2}{2\kappa}) \right) \\
        &=h(0) ((\kappa-2)\cos(2x+2y)+(4-\kappa)\cos(x)\cos(x+2y)) \\
        &=h(0) (2\cos(x)\cos(x+2y)-(\kappa-2)\sin(x)\sin(x+2y)), \\
        V'(0)&=h(0)\left( (\kappa-1)\cos(\pi \tfrac{(4-\kappa)(\kappa-1)}{\kappa}) + (2-\kappa) \cos(\pi \tfrac{4-\kappa}{2}) \cos(\pi \tfrac{(4-\kappa)(2-\kappa)}{2\kappa}) \right) \\
        &=h(0) ((1-\kappa)\cos(2x+y)+(\kappa-2)\cos(x)\cos(x+y)) \\
        &=h(0) (-\cos(x)\cos(x+y)+(\kappa-1)\sin(x)\sin(x+y)).
    \end{align*}
    Differentiating~\eqref{eq:cr-23-1-intro} with respect to $\lambda$, and substituting $\lambda=0$, we derive
    \begin{align}\label{eq:log-cr-23-1}
        \mathbb{E}[\log \mathsf{R}_{B \to R}]&=\frac{1}{2\cos(\frac{\pi(4-\kappa)}{\kappa})} \frac{U'(0)V(0)-U(0)V'(0)}{V(0)^2} \nonumber \\
        &=\frac{1}{-2\cos(y)} \cdot h(0) \cdot \frac{2\sin^2(y)-\kappa \sin^2(x)}{\sin(y)}.
    \end{align}
    The above equation again follows from elementary trigonometric identities. Combining~\eqref{eq:c-kappa},~\eqref{eq:log-cr-1-23}, and~\eqref{eq:log-cr-23-1}, we obtain
    \[ \mathsf{C}(\kappa)^2=1+\frac{\mathbb{E}[\log \mathsf{R}_{B \to R}]}{\mathbb{E}[\log \mathsf{R}_{R \to B}]}=1+\frac{2\sin^2(y)-\kappa \sin^2(x)}{-2\sin^2(y)}=\frac{\kappa \sin^2(x)}{2 \sin^2(y)}. \qedhere \]
\end{proof}

\bibliographystyle{alpha}

\bibliography{ref}

\end{document}